\newtheorem{prop}{Proposition}[section]
\newtheorem{thm} [prop]{Theorem}
 \newtheorem{lemma} [prop]{Lemma}
\theoremstyle{definition}
\newtheorem*{ack}{Acknowledgements}
\DeclareMathOperator{\Out}{Out}
\DeclareMathOperator{\Inn}{Inn}
\DeclareMathOperator{\Aut}{Aut}
\DeclareMathOperator{\fix}{fix}
\renewcommand\leq{\leqslant} 
\renewcommand\geq{\geqslant} 
\title{The base size of a primitive diagonal group}
\author{Joanna B. Fawcett}
\begin{document}

\maketitle

\begin{abstract}
A base $\mathscr{B}$ for a finite permutation group $G$ acting on a set $\Omega$ is a subset of $\Omega$ with the property that only the identity of $G$ can fix every point of $\mathscr{B}$. We prove that a primitive diagonal group $G$ has a base of size 2 unless the top group of $G$ is the alternating or symmetric group acting naturally, in which case a tight bound for the minimal base size of $G$ is given. This bound also satisfies a well-known conjecture of Pyber. Moreover, we prove that if the top group of $G$ does not contain the alternating group, then the proportion of pairs of points that are bases for $G$ tends to 1 as $|G|$ tends to infinity. A similar result for the case when the degree of the top group is fixed is given.
\end{abstract}

\section{Introduction}
\label{Introduction}

Let $G$ be a finite permutation group acting on a set $\Omega$. A \textit{base} $\mathscr{B}$ for $G$ is a non-empty subset of $\Omega$ whose pointwise stabiliser is trivial. The \textit{base size} of $G$ is the  minimal cardinality of a base for $G$, and we denote this by $b(G)$.  Bases have been very useful in group theory, both theoretically in bounding the size of a primitive permutation group (e.g. \cite{bab1}) and computationally (surveyed in \cite{ser2}). Accordingly, much research has been done on bounding the  base size of a primitive permutation group $G$ (e.g. \cite{lie}). 

Recently, it has been proved in \cite{ck,gssh,lsh1,bur,bgs,bls,bow} that if $G$ is a finite almost simple primitive permutation group, then $b(G)\leq 7$ unless the action of $G$ is standard, in which case the base size is unbounded in general. ($G$ has a \textit{standard action} if $G$ either has socle $A_m$ and acts on the set of  $k$-subsets or partitions of $\{1,\dots,m\}$, or is a classical group that acts primitively on an orbit of subspaces of its natural module.) This was conjectured to be the case by Cameron \cite{cam}. In fact, it is proved in \cite{bgs} that if $G$ is $A_m$ or $S_m$ acting primitively on a set of size $n$, then $b(G)=2$ unless the action of $G$ is standard or $G$ is one of 13 listed exceptions. Together with the work of J. James  \cite{ja}, this classifies the primitive actions of $S_m$ and $A_m$ with base size 2. A similar result for primitive actions of almost simple classical groups is forthcoming in \cite{bgs2}. With the goal in mind of a theorem classifying which primitive permutation groups admit a base of size 2, we must therefore consider the other types of primitive permutation groups as classified by the O'Nan-Scott Theorem \cite{lps}. These types broadly consist of diagonal groups, twisted wreath products, wreath products, and affine groups. In this paper, we focus on groups of diagonal type. These primitive permutation groups are not often studied, but they are  important, especially to the base size 2 problem. This is because the base size of a primitive diagonal group behaves much like the base size of an almost simple primitive permutation group, in that the base size is either 2, or,  for several explicitly given classes of groups, can be unbounded.

Let $T$ be a finite non-abelian simple group, and let $k$ be an integer that is at least 2. A group of diagonal type $G$  with socle $T^k$ acts primitively on a set $\Omega(k,T)$ with degree $|T|^{k-1}$ and is a (not necessarily split) extension of $T^k$ by a subgroup of $\Out(T)\times S_k$; precise definitions will be given in Section \ref{Preliminaries}. The permutation group induced from the conjugation action of $G$  on the $k$ factors of $T^k$ is called the top group of $G$ and is denoted by $P_G$. The group $P_G$ is either primitive in its action on $k$ points or trivial when $k=2$, and it plays a large part in determining the base size of $G$. Observe that if the top group $P_G$ does not contain the alternating group $A_k$, then we necessarily have $k\geq 5$ since $A_k$ and $S_k$ are the only primitive permutation groups of degree $k$ when $k<5$ (and the only permutation groups when $k=2$).

\begin{thm}
\label{small top}
Let $G$ be a group of diagonal type with socle $T^k$ for some finite non-abelian simple group $T$. If the top group $P_G$ is not the alternating group $A_k$ or the symmetric group $S_k$, then $b(G)=2$.
\end{thm}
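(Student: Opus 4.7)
The plan is to exhibit two points $\alpha,\beta \in \Omega(k,T)$ with trivial joint stabiliser in $G$. Take $\alpha$ to be the point whose socle-stabiliser is the diagonal subgroup $D = \{(t,\ldots,t):t\in T\}$ of $T^k$, so that $G_\alpha$ is a (possibly non-split) extension of $D\cong T$ by a subgroup $H$ of $\Aut(T) \times P_G$. The remaining points of $\Omega$ are then parametrised by tuples $\mathbf{t} = (1, t_2, \ldots, t_k) \in T^k$, with $t_1=1$ fixed by normalising against the left $D$-action. A direct computation shows that a pair $(\phi,\sigma) \in H$ (viewed modulo $D$) fixes the point $\beta$ corresponding to $\mathbf{t}$ precisely when
\[
\phi\bigl(t_{\sigma^{-1}(1)}^{-1}\, t_{\sigma^{-1}(i)}\bigr) \;=\; t_i \qquad \text{for every } i \in \{1,\ldots,k\}.
\]

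From here I would run a counting argument. For each non-identity $(\phi,\sigma) \in H$, let $N(\phi,\sigma)$ denote the number of tuples $\mathbf{t}$ satisfying the above system, and aim to establish
\[
\sum_{(\phi,\sigma) \neq (1,1)} N(\phi,\sigma) \;<\; |T|^{k-1}.
\]
To bound $N(\phi,\sigma)$, I analyse the system one $\sigma$-cycle at a time. On a cycle of length $\ell$ disjoint from the index $1$, the system becomes a simple recurrence in which one coordinate is free subject to lying in $\fix(\phi^\ell)$ while the rest are determined, contributing a factor of at most $|\fix(\phi^\ell)|$; on the cycle containing $1$, the twist by $t_{\sigma^{-1}(1)}^{-1}$ introduces a recurrence whose solution set has cardinality at most $|T|$. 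Altogether this yields a bound of the shape $N(\phi,\sigma) \leq |T|^{c(\sigma)-1} \prod_i |\fix(\phi^{\ell_i})|$, where $c(\sigma)$ is the number of cycles of $\sigma$ and the $\ell_i$ its cycle lengths away from $1$.

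The hypothesis $A_k \not\leq P_G$ enters decisively when summing over $H$. Mar\'{o}ti's theorem bounds $|P_G|$ polylogarithmically in $k$, while Liebeck--Praeger--Saxl-type bounds on the minimal degree of a primitive group not containing $A_k$ force every non-identity $\sigma \in P_G$ to move a substantial fraction of the $k$ points, so that $c(\sigma) \leq k - d$ for a $d$ growing with $k$. Combined with the elementary fact that $\fix(\phi)$ is a proper subgroup of $T$ whenever $\phi$ is non-trivial, and with $|T| \geq 60$, the desired inequality holds outside a finite list of exceptional $(k, T, P_G)$, which must be handled by inspection. The separate subcase $\sigma = 1$ with $\phi \neq 1$ reduces to showing that the natural action of $\Aut(T)$ on $T$ has base size $2$, a classical fact.

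The principal obstacle will be the trade-off between elements $(\phi, \sigma)$ where $\sigma$ has few moved points (giving many cycles and hence a large exponent $c(\sigma) - 1$) and $\phi$ has an unusually large fixed-point subgroup; here the hypothesis $A_k \not\leq P_G$ is essential, as it rules out the genuinely short-support elements such as transpositions and $3$-cycles whose contribution would otherwise overwhelm the counting inequality.
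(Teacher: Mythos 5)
Your approach is genuinely different from the paper's, and it also has a real gap.

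The paper proves Theorem~\ref{small top} constructively. For $k>32$ it invokes a theorem of Seress: a primitive group of degree $k>32$ not containing $A_k$ has a regular orbit on the power set of $[k]$. This gives a partition of $[k]$ with trivial setwise stabiliser in $P_G$, from which an explicit second base point is built out of a generating pair of $T$, and the argument then proceeds via the ``order matrix'' $\mathcal{O}_\omega$ and Lemma~\ref{pwise stab}. For $k\leq 32$ the paper uses Bochert's $k/2$ bound on the base size of $P_G$ together with Malle--Saxl--Weigel (generation by elements of distinct orders) and explicit information about small simple groups. Your fixed-point-counting strategy is instead the approach the paper reserves for the \emph{asymptotic} results of Section~\ref{Probabilistic results} (Theorems~\ref{diag prob} and~\ref{diag prob k fixed}), where it yields ``base size $2$ for all but finitely many $(k,T,P_G)$'' but not ``base size $2$ for all.''

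The concrete gap is in your last two paragraphs. First, the subcase $\sigma=1$ does not reduce to the classical fact that $\Aut(T)$ acting on $T$ has base size $2$: that fact asserts the existence of a single regular orbit, whereas your counting argument needs to bound the full sum $\sum_{\phi\neq 1}|\fix(\phi)|^{k-1}$ over all non-identity elements of $\Aut(T)$ lying in $G_\alpha$ (including the $|T|-1$ inner automorphisms), which requires CFSG bounds on numbers of prime-order classes of $\Aut(T)$ and on the minimal degree $p(T)$ as in Lemmas~\ref{f(X)}--\ref{p(T) bound} and~\ref{r_2(G)}. Second, and more seriously, your claim that the global inequality $\sum_{(\phi,\sigma)\neq 1} N(\phi,\sigma)<|T|^{k-1}$ fails only for ``a finite list of exceptional $(k,T,P_G)$, which must be handled by inspection'' is not established by the preceding bounds and cannot be handled by inspection without making all the implicit constants effective. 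The Mar\'oti- and minimal-degree-type bounds you invoke are asymptotic; with the crude estimates you describe the failure set is a priori infinite unless you carry out the whole analysis of Lemmas~\ref{r_1(G)}--\ref{r_3(G)*}, and even then the resulting exceptional set is finite but not explicit. The paper sidesteps exactly this difficulty by using Seress's regular-orbit theorem, which holds for \emph{every} $k>32$ and every primitive $P_G\not\geq A_k$ with no asymptotic slack, and separately dispatching the bounded range $k\leq 32$ by hand.
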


This is the best result we could hope for since a group of diagonal type never has a base of size 1. The  proof of Theorem \ref{small top} is constructive, though it depends  on a non-constructive result in \cite{s} which determines exactly when a primitive permutation group has a regular orbit on the power set of the domain of its  action. Note that Gluck, Seress and Shalev \cite{gssh} used \cite{s} to construct a base of size 3 for a group of diagonal type whose top group is neither alternating, symmetric nor of degree less than 32.

The situation is markedly different, however, when the top group $P_G$ is either the alternating group $A_k$ or the symmetric group $S_k$.  Observe that groups of diagonal type can be constructed for any finite non-abelian simple group $T$ and for arbitrarily large $k$.

\begin{thm}
\label{base size for alt}
Let $G$ be a group of diagonal type with socle $T^k$ for some finite non-abelian simple group $T$ where the top group $P_G$ contains the alternating group $A_k$. If $k\geq 3$ then
$$ b(G) = \left\lceil \frac{\log{\ k\ }}{\log{|T|}} \right\rceil +a_G$$
where $a_G\in\{1,2\}$ and $a_G=1$ if $|T|^l<k\leq |T|^{l}+|T|-1$ for some positive integer $l$. If $k=2$, then $b(G)=3$ when $P_G=1$, and $b(G)\in\{3,4\}$ otherwise.
\end{thm}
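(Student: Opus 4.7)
The plan is to translate $b(G)$ into a combinatorial question about $k$-tuples in $T^{m-1}$ with trivial stabiliser under a natural affine action, and then establish matching upper and lower bounds.

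Represent points of $\Omega(k,T)$ as equivalence classes $[t_1,\dots,t_k]$ of elements of $T^k$ modulo diagonal left multiplication by $T$. Fix the base point $v_1 := [(1,\dots,1)]$, so that $G_{v_1}$ is (an extension of) $\Aut(T) \times P_G$. Any additional base point may be normalised so that its first coordinate is $1$, and a candidate base $\{v_1,\dots,v_m\}$ is then encoded by $k$ columns $c_1 = \mathbf{1}, c_2,\dots,c_k \in T^{m-1}$, where $c_i$ records the $i$th coordinate of each normalised $v_j$ for $j \geq 2$. A direct computation yields the fixing condition: $\phi\sigma \in \Aut(T) \times P_G$ fixes every $v_j$ if and only if there exists $\mathbf{s} \in T^{m-1}$ such that $\phi(c_{\sigma^{-1}(i)}) = \mathbf{s}\cdot c_i$ for all $i$. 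Consequently, $b(G) \leq m$ is equivalent to the existence of a $k$-tuple $(c_1,\dots,c_k) \in (T^{m-1})^k$ with $c_1 = \mathbf{1}$ whose stabiliser in $(T^{m-1} \rtimes \Aut(T)) \times P_G$ (acting affinely on each coordinate and permuting coordinates via $P_G$) is trivial.

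For the lower bound, since $A_k \leq P_G$ and $k \geq 3$, one exploits permutations with fixed points in $P_G$ (e.g.\ $3$-cycles) to force $\mathbf{s} = \mathbf{1}$ and then deduces that $c_1,\dots,c_k$ must be pairwise distinct. This gives $k \leq |T|^{m-1}$, hence $m \geq \lceil \log k / \log |T| \rceil + 1$. To obtain the sharper $+a_G$ refinement, one observes that the setwise stabiliser of $\{c_1,\dots,c_k\}$ in the affine group $T^{m-1} \rtimes \Aut(T)$ coincides with that of its complement $C := T^{m-1} \setminus \{c_1,\dots,c_k\}$, and when $|C|$ is too small, $C$ is forced to admit non-trivial affine symmetries (for instance, $|C| \leq 1$ leaves the stabiliser containing a translated copy of $\Aut(T)$). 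Identifying exactly which complement sizes admit trivial stabiliser pins down the special range $|T|^l < k \leq |T|^l + |T| - 1$ in which $a_G = 1$ suffices, and forces $a_G = 2$ outside it.

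For the upper bound, with $m = \lceil \log k / \log |T| \rceil + a_G$, the slack $|T|^{m-1} - k$ is sufficient to construct $k$ columns in $T^{m-1}$ (either directly by a greedy/genericity argument, or by invoking a regular-orbit result of the kind used in Theorem \ref{small top}) whose joint stabiliser is trivial in the required affine group. The case $k=2$ reduces to a separate elementary analysis on $\Omega(2,T) \cong T$ (with $P_G$ of order at most two), yielding the stated values of $b(G)$. The principal obstacle is the lower-bound refinement: pinning down the exact dichotomy for $a_G$ requires a careful classification of the subsets of $T^{m-1}$ that admit no non-trivial symmetry under $T^{m-1} \rtimes \Aut(T)$, a regular-orbit-flavoured problem on the power set.
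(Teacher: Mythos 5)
Your column-encoding of the base question is essentially the same translation the paper makes via Lemma \ref{pwise stab} and the matrix $B$ in Proposition \ref{lower bound for alt}, so the framing is on target, but the sketch has two genuine gaps. For the lower bound, the deduction that $A_k\leq P_G$ forces the columns $c_1,\ldots,c_k$ to be pairwise distinct is false as stated: a single repeated pair $c_i=c_j$ gives only the odd transposition $(i\ j)$ as a column symmetry, which need not lie in $P_G$. Three identical columns or two repeated pairs do give an even permutation directly, but the single-pair case must be handled separately; the paper does this in Proposition \ref{lower bound for alt} by choosing a non-trivial inner automorphism $\alpha$ stabilising the column set and adjusting the induced permutation to be even. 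You also skip the step of showing that such permutations lift into $G$ with trivial automorphism part — that is, that $\Inn(T)^k\rtimes A_k\leq G$ — which is Lemma \ref{Ak in G} and uses the CFSG-dependent bound on $|\Out(T)|$.

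For the upper bound, a greedy or regular-orbit existence argument would plausibly produce a base of roughly the right size, but the theorem demands the sharp value $a_G=1$ precisely on the range $|T|^l<k\leq |T|^l+|T|-1$, and a crude slack count does not yield this dichotomy. The paper's Proposition \ref{diagonal base size bound} gives an explicit construction (a base-$|T|$ digit encoding of the column indices) producing a base of size $\lceil\log(k-|T|+1)/\log|T|\rceil+2$, from which the $a_G=1$ range is read off since $k-|T|+1\leq|T|^l$ exactly there. Finally, the remark that your complement analysis ``forces $a_G=2$ outside'' the range over-claims: the theorem asserts only $a_G\in\{1,2\}$ with $a_G=1$ on that range, and the paper explicitly leaves open when $a_G=1$ occurs elsewhere, so this part of the proposal should be withdrawn rather than filled in.
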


We will see in Proposition \ref{lower bound for alt} that if either $k=|T|$, or if $\Inn(T)^k\rtimes S_k\leq G$ and $k$ is $|T|^l$ or $|T|^l-1$ for some positive integer $l$, then $a_G=2$. Also, we give examples  when $k=2$ of two groups $G$ with $b(G)=3$ and two groups $G$ with $b(G)=4$ (see the end of Section \ref{Base sizes for diagonal type groups}). Thus the bound of Theorem \ref{base size for alt} is essentially best possible. However, it remains unclear precisely when the two possibilities occur. In particular, we do not know when $b(G)=2$, though $2<k<|T|$ is a necessary condition.

 Theorems \ref{small top} and \ref{base size for alt} also allow us to prove a well-known conjecture of Pyber in the case of diagonal type groups. For any finite permutation group $G$ of degree $n$, it is easy to see that $\lceil\log{|G|}/\log{n}\rceil\leq b(G)$; simply show that $|G|\leq n^{b(G)}$ by considering an appropriate chain of pointwise stabilisers of base elements. Pyber \cite{py1} conjectured that there exists an absolute constant $c$ for which the base size of a primitive permutation group $G$ of degree $n$ is at most $c\log{|G|}/\log{n}$. For example,  almost simple groups with non-standard primitive actions satisfy Pyber's conjecture because their base sizes are bounded above by an absolute constant  \cite{ck,gssh,lsh1}, and Benbenishty \cite{ben} has verified the conjecture for standard actions of almost simple groups. Moreover, soluble primitive permutation groups satisfy Pyber's conjecture  \cite{ser1}, as do certain other affine primitive permutation groups  \cite{glm,lsh2}.

\begin{thm}
\label{pyber}
Let $G$ be a group of diagonal type. Then $G$ satisfies Pyber's conjecture. In fact,
$$b(G)\leq \left\lceil \frac{\log{|G|}}{\log{n}}\right\rceil+2$$
where $n$ is the degree of $G$.
\end{thm}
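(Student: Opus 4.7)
The plan is to deduce the theorem from Theorems \ref{small top} and \ref{base size for alt} via a case analysis on the top group $P_G$. If $P_G$ does not contain $A_k$, then Theorem \ref{small top} gives $b(G)=2$; since $G$ acts transitively on $\Omega$, $|G|\geq n$ and therefore $\lceil \log|G|/\log n\rceil+2\geq 3$, settling this case. If $P_G\geq A_k$ and $k=2$ (which covers the case $P_G=1$), Theorem \ref{base size for alt} gives $b(G)\leq 4$, while the socle bound $|G|\geq |T|^2=n^2$ yields $\lceil \log|G|/\log n\rceil\geq 2$, again settling the claim.

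In the remaining case, $k\geq 3$ and $P_G\geq A_k$. Here Theorem \ref{base size for alt} gives $b(G)\leq l+2$ with $l:=\lceil \log k/\log|T|\rceil$, so with $n=|T|^{k-1}$ the target reduces to $l\leq \lceil \log|G|/\log n\rceil$, equivalently $|G|>n^{l-1}$. Since $G$ is an extension of the socle $T^k$ whose quotient projects onto $P_G\geq A_k$, we have $|G|\geq|T|^k\cdot k!/2$. For $l\in\{1,2\}$ the weaker estimate $|G|\geq|T|^k>|T|^{k-1}=n\geq n^{l-1}$ suffices. For $l\geq 3$, we have $k>|T|^{l-1}\geq|T|^2\geq 3600$, and the inequality $|T|^k\cdot k!/2>|T|^{(l-1)(k-1)}$ becomes, after taking logs and using $(l-1)\log|T|<\log k$, an inequality that follows comfortably from Stirling's bound $\log(k!)\geq k\log k-k$ together with $\log|T|\geq \log 60$.

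I expect the main obstacle to be ensuring enough room in the final estimate. The socle alone gives only $\log|G|/\log n\geq k/(k-1)$, which is too weak to absorb the $\log k/\log|T|$ contribution to $b(G)$ once $k\gg|T|$. The key point is that in exactly this regime ($l\geq 3$), the hypothesis $P_G\geq A_k$ forces an extra factor of at least $k!/2$ in $|G|$, and a routine Stirling calculation then finishes the argument. Carefully separating off $l\leq 2$ from $l\geq 3$ is essential, since the small cases do not need the factorial contribution at all.
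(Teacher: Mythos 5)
Your proof is correct and rests on the same essential ingredient as the paper's: when $A_k\leq P_G$, the factor $|P_G|\geq k!/2$ inside $|G|$ gives enough extra size, via a Stirling-type estimate, to absorb the $\lceil\log k/\log|T|\rceil$ term in the base size. The route differs in how this is organized. The paper works with the (slightly sharper) upper bound $b(G)\leq\lceil\log(k-|T|+1)/\log|T|\rceil+2$ from Proposition \ref{diagonal base size bound} in the case $A_k\leq P_G$, $k>|T|$, and dispatches it in one stroke by chaining $k-|T|+1\leq k|T|/e$ with $(k|T|/e)^{k-1}\leq\tfrac12(k^k/e^{k-1})|T|^{k-1}\leq|G|$; all other cases are absorbed into a single catch-all ($b(G)\leq 4$ and $\lceil\log|G|/\log n\rceil\geq 2$). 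You instead start from the coarser bound $b(G)\leq l+2$ with $l=\lceil\log k/\log|T|\rceil$ coming from Theorem \ref{base size for alt}, and split on $l$: for $l\leq 2$ the socle alone ($|G|\geq|T|^k>n$) suffices, and only for $l\geq 3$ do you invoke the factorial and Stirling. Both work; the paper's version avoids the sub-case split and lands on a cleaner inequality, while yours makes more explicit exactly where the factorial is needed (namely when $k\gg|T|$), which is a worthwhile structural observation. One small inefficiency in your argument: using $b(G)\leq l+2$ rather than the sharper $\lceil\log(k-|T|+1)/\log|T|\rceil+2$ slightly weakens what is available, but as your computation shows there is still ample room.
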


We remark that although mention is made in \cite{lsh2} of a forthcoming paper by Seress in which Pyber's conjecture is proved for various primitive permutation groups including  groups of diagonal type, this paper did not appear. Moreover, in Gluck, Seress and Shalev \cite{gssh}, a base for groups of diagonal type is constructed and it is claimed there that the argument can be improved to construct a base of size $\lceil \log{|G|}/\log{n}\rceil+3$ (where $G$ is a group of diagonal type with degree $n$), but the details of the proof of this weaker result are not given.

Now we consider the probabilistic side of the theory. The result of  \cite{ck,gssh,lsh1,bur,bgs,bls,bow} that $b(G)\leq 7$  when $G$ is an almost simple primitive permutation group with a non-standard action actually has a stronger form. Cameron and Kantor \cite{ck} conjectured  that for such groups $G$ there exists an absolute constant $c$ with the property that the probability that a random $c$-tuple of points is a base for $G$ tends to 1 as the order of $G$ tends to infinity. In the same paper, Cameron and Kantor proved that their conjecture is true with $c=2$ when the socle of $G$ is alternating. Liebeck and Shalev \cite{lsh1}  then proved the general conjecture for some undetermined constant $c$ by using  \cite{gssh} and counting fixed points of elements. The constant $c=6$ was finally established through work in \cite{lsh3,bls}. We have an analogous result for groups of diagonal type. The proof uses the method of counting fixed points of elements as in \cite{lsh1}.

\begin{thm}
\label{diag prob}
Let $G$ be a group of diagonal type with socle $T^k$ for some finite non-abelian simple group $T$, and suppose that the top group $P_G$ does not contain the alternating group $A_k$. Then the proportion of pairs of points from $\Omega(k,T)$ that are bases for $G$ tends to 1 as $|G|\to \infty$.
\end{thm}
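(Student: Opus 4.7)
My plan is to adapt the fixed-point-ratio strategy of Liebeck and Shalev to the diagonal setting. By a union bound, the probability that a random ordered pair $(\omega_1,\omega_2)\in\Omega\times\Omega$ fails to be a base is at most
$$Q(G):=\sum_{1\ne g\in G}\left(\frac{\fix(g)}{|\Omega|}\right)^{\!2}=\sum_{\{1\}\ne C}\frac{|C\cap H|^2}{|C|},$$
where in the second expression $C$ ranges over non-trivial conjugacy classes of $G$ and $H$ is a point stabiliser, using the standard identity $\fix(g)/|\Omega|=|g^G\cap H|/|g^G|$. It therefore suffices to show $Q(G)\to 0$ as $|G|\to\infty$, which I would do by partitioning $g\in G\setminus\{1\}$ according to its image $\pi$ in the top group $P_G$.

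When $\pi=1$, write $g=(s_1,\ldots,s_k)\alpha\in T^k.\Out(T)$. Using the coset realisation $\Omega=T^k/D$ with $D$ the diagonal, a direct calculation shows $\fix(g)=0$ unless the elements $s_i\alpha$ all lie in a common $T$-conjugacy class of $T.\langle\alpha\rangle$, in which case $\fix(g)=|C_{T.\langle\alpha\rangle}(x\alpha)|^{k-1}$ for any representative $x\alpha$. Squaring and summing over such $g$, and then over $\alpha$, the contribution to $Q(G)$ works out exactly to
$$\sum_{\alpha}\sum_{[x\alpha]\ne\{1\}}|[x\alpha]|^{-(k-2)}.$$
This tends to $0$ as $|T|\to\infty$ by the well-known fact that minimum non-trivial centraliser indices in finite non-abelian simple groups grow with $|T|$ (combined with $|\Out(T)|=O(\log|T|)$ and a trivial bound on the number of classes of $T$), and decays geometrically in $k$ for fixed $T$.

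When $\pi\ne 1$, a cycle-product analysis on $\Omega$ yields an explicit formula for $\fix(g)$ in terms of the cycle lengths $l_1,\ldots,l_r$ of $\pi$, the cycle-products of the $s_i$'s along each cycle, and centralisers in $T.\langle\alpha\rangle$. Crucially, $\fix(g)=0$ unless each cycle product is conjugate (up to the $\alpha$-twist) to $d^{l_j}$ for a common class $[d]$, so the vast majority of choices of $(s_i)$ give no contribution. Summing $\fix(g)^2$ over $g$'s projecting to a fixed $\pi$ and dividing by $|\Omega|^2$ then yields a quantity of the form $|T|^{-\varepsilon(\pi)}$ for some positive $\varepsilon(\pi)$ depending on the cycle structure of $\pi$.

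The key step, and the main obstacle, is summing these contributions over $\pi\in P_G\setminus\{1\}$. Here the hypothesis $A_k\not\le P_G$ enters essentially: Bochert's bound gives $|P_G|\le 4^k$, and Babai's lower bound on the minimal degree of a primitive group not containing $A_k$ forces every non-trivial $\pi\in P_G$ to move at least $c\sqrt{k}$ points (hence $k-c(\pi)\ge c'\sqrt{k}$ for absolute constants $c,c'>0$). Without the Babai bound, a single transposition in $P_G$ would contribute an individual fixed-point ratio as large as $|T|^{-1}$, and the union bound over the $|T|^k$ elements projecting to it would fail catastrophically; it is precisely this exclusion that restricts $P_G$ sufficiently to control the double sum. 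Combining the two bounds makes the $\pi\ne 1$ contribution tend to $0$ both as $k\to\infty$ (for arbitrary $|T|$) and as $|T|\to\infty$ with $k$ fixed, completing the proof.
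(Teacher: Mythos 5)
Your overall strategy -- bound the failure probability by a sum of squared fixed-point ratios, pass to conjugacy classes, and split according to the image $\pi$ of $g$ in the top group -- is exactly the one the paper takes. You correctly identify that the hypothesis $A_k\nleq P_G$ is what makes the top-group sum controllable, and the Babai/Praeger--Saxl inputs you mention are indeed used. (Small point of attribution: the exponential bound $|P_G|\leq 4^k$ is due to Praeger and Saxl, not Bochert; Bochert's classical result is the $k/2$ base-size bound used elsewhere in the paper. Also, the paper first replaces $G$ by $A(k,T)\rtimes P_G$, which has the same degree and admits no smaller bases, precisely so that the conjugacy class sizes factor cleanly; working with arbitrary $G$ directly is awkward, though this is easily repaired.)

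There is, however, a genuine gap in the $\pi\neq 1$ case, and it is exactly the case you single out as the ``main obstacle''. You propose to combine $|P_G|\leq 4^k$ with Babai's minimal degree bound $\mu(P_G)\geq c\sqrt{k}$ and conclude the $\pi\neq 1$ contribution tends to $0$ in the regime $k\to\infty$ with $|T|$ bounded. But these two bounds alone do not suffice: the resulting estimate is of the shape $4^k\cdot |T|^{O(1)-c'\sqrt{k}}$, and for fixed $|T|$ (say $|T|=60$) the ratio $4^k/60^{c'\sqrt{k}}$ diverges as $k\to\infty$. The trouble is that an element of small support need not have small $P_G$-class size; the savings from minimal degree is only $|T|^{c'\sqrt{k}}$ while the number of elements can be genuinely exponential in $k$. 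What is needed, and what the paper uses, is the Liebeck--Saxl classification of primitive groups with minimal degree $<k/3$: any such $P_G$ sits in $S_m\wr_r S_r$ in product action on $\binom{m}{l}^r$ points with $A_m^r\leq P_G$, which gives explicit control of class sizes together with fixed-point counts, case by case on $(r,l)$. For minimal degree $\geq k/3$, the Praeger--Saxl bound (for large $|T|$) or Babai's \emph{order} bound $|P_G|\leq \exp(4\sqrt{k}(\log k)^2)$ (for the finitely many small $|T|$) suffices. Without this dichotomy the argument fails in exactly the asymptotic regime you need.

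A secondary gap occurs in your $\pi=1$ analysis. You claim your sum $\sum_\alpha\sum_{[x\alpha]\neq\{1\}}|[x\alpha]|^{-(k-2)}$ tends to $0$ as $|T|\to\infty$ using $|\Out(T)|=O(\log|T|)$ and ``a trivial bound on the number of classes of $T$''. For the smallest admissible $k$ (namely $k=5$, so exponent $k-2=3$) a trivial bound like $f(T)\leq |T|$ is nowhere near enough: one has only $|[x\alpha]|\geq p(T)$ and $|T|$ can be as large as $p(T)!/2$, so $|T|/p(T)^3$ does not go to $0$. The paper instead uses the CFSG-based Liebeck--Pyber bound $f(T)\leq (6q)^{l(T)}$ for $T$ of Lie type, together with explicit lower bounds for $p(T)$, to show $|\Out(T)|^2 f(T)=O(p(T)^{11/4})$, which is what actually makes the $\pi=1$ term decay. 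So both the $\pi=1$ and $\pi\neq 1$ cases require sharper, classification-dependent inputs than your proposal allows for.
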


Similarly, we have a partial result that includes the case when the top group $P_G$ contains the alternating group $A_k$. One consequence of this result is that for any fixed $k$ at least 5, there are only finitely many groups of diagonal type with a degree $k$ top group which do not have base size 2.

\begin{thm}
\label{diag prob k fixed}
Let $G$ be a group of diagonal type with socle $T^k$ for some finite non-abelian simple group $T$ where $k\geq 5$. The proportion of pairs of points from $\Omega(k,T)$ that are bases for $G$ tends to 1 if $k$ is fixed as $|G|\to \infty$.
\end{thm}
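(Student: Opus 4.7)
The plan is to run the fixed-point counting method used in the proof of Theorem \ref{diag prob}, in the spirit of \cite{lsh1}. With $n = |\Omega(k,T)| = |T|^{k-1}$, the number of ordered pairs in $\Omega(k,T)^2$ which fail to be bases for $G$ is at most
\[
Q(G) \;:=\; \sum_{1 \neq g \in G} |\fix(g)|^2,
\]
so it suffices to show $Q(G)/n^2 \to 0$ as $|T| \to \infty$ with $k$ fixed. The structural observation making this regime tractable, in contrast to Theorem \ref{diag prob}, is that $|G:T^k| \leq |\Out(T)| \cdot k!$ grows only polylogarithmically in $|T|$: indeed $|\Out(T)| = O(\log|T|)$ by the classification of finite simple groups, and $k!$ is an absolute constant.

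I would partition the non-identity elements of $G$ by their image $(\alpha,\pi) \in \Out(T) \times S_k$ in $G/T^k$, writing a general such element in the form $g = (t_1,\dots,t_k)\,\alpha\,\pi$. Identifying $\Omega(k,T)$ with $T^k/D$, where $D \leq T^k$ is the diagonal, a representative $(s_1,\dots,s_k)$ is fixed by $g$ precisely when there exists a common $c \in T$ with $t_i\, s_{\pi(i)}^{\alpha} = s_i\, c$ for every $i$. Along each cycle of $\pi$ this recurrence determines the coordinates inductively from one starting variable, subject to a single (twisted) consistency constraint that confines the starting variable to a coset of a centraliser. In the untwisted socle case $(\alpha,\pi) = (1,1)$, a Burnside-type calculation gives
\[
\sum_{g \in T^k \setminus \{1\}} |\fix(g)|^2 \;=\; |T|^{k-1} \sum_{1 \neq s \in T} |C_T(s)|^{k-1},
\]
so the contribution to $Q(G)/n^2$ is exactly $\sum_{[c] \neq [1]} |[c]|^{\,2-k}$; analogous formulae, involving $\alpha$-twisted conjugacy classes of $T$ and fewer free cycles when $\pi \neq 1$, control the remaining images.

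The decisive analytic input is the uniform decay $\sum_{[c] \neq [1]} |[c]|^{-(k-2)} \to 0$ (and similarly for its twisted analogues) as $|T| \to \infty$ for every fixed $k \geq 3$. For $T = A_m$, which is the critical case, every non-identity element has support at least $3$, so every non-trivial $T$-conjugacy class has size $\Theta(m^r)$ with $r \geq 3$; grouping the sum by support and using that the number of classes of each fixed support is bounded independently of $m$ yields $\sum_{[c]\neq[1]}|[c]|^{-(k-2)} = O(m^{-3(k-2)}) \to 0$. The case-by-case analysis for classical, exceptional and sporadic simple groups via the classification yields analogous decay. The contribution from elements with $\pi \neq 1$ is strictly smaller, because each non-trivial cycle of $\pi$ of length $\ell$ reduces the number of free coordinates by $\ell - 1$, introducing additional decay factors involving $M(T) := \max_{1 \neq c \in T} |C_T(c)|$; the hypothesis $k \geq 5$ enters here to provide enough headroom in these estimates to absorb the polylogarithmic $|\Out(T)| \cdot k!$ multiplicity. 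The main obstacle is establishing these class-sum estimates uniformly across the families of finite simple groups, together with the twisted-conjugacy variants needed when $\alpha \neq 1$ or $\pi$ has non-trivial cycles interacting combinatorially with $\alpha$.
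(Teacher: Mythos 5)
Your strategy is the same fixed-point counting method as the paper, and your reduction and socle calculation are correct. However, two places where you flag "the main obstacle" are exactly where the paper does the real work, and your intermediate claims there are not quite right.

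First, because you sum over \emph{all} non-identity $g$ rather than just elements of prime order, the class-counting problem is genuinely harder than it needs to be: $A_m$ has about $e^{c\sqrt{m}}$ conjugacy classes, which is superpolynomial in $p(A_m)=m$, so a naive bound fails and you are forced into the support-grouping argument for $A_m$. Restricting to prime-order elements (as the paper does, using the observation that any non-base is fixed by some prime-order element) lets you work with $f_p(\Aut(T))$, which is polynomially bounded in $p(T)$, and makes the families of Lie type go through via a single uniform estimate (the paper's Lemma~\ref{p(T) bound} and the bound $f(T)\leq (6q)^{l(T)}$ of Liebeck--Pyber). Your "case-by-case analysis for classical, exceptional and sporadic" is not a small cleanup: for Lie type groups you need a concrete relation between $|\Out(T)|$, the number of classes, and $p(T)$, and this is precisely what Lemma~\ref{p(T) bound} provides.

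Second, your claim that the contribution from elements with $\pi\neq 1$ "is strictly smaller" is false as stated, and it glosses over the only genuinely new difficulty relative to Theorem~\ref{diag prob}. When $P=S_k$ there are transpositions, and a transposition $\pi=(1\ 2)$ fixes $k-2$ points, so only one "free coordinate" is lost; with $\alpha=1$ the corresponding contribution is of order $\binom{k}{2}/|T|$, which for $T$ of Lie type can \emph{dominate} the socle contribution $\sum_{\mathcal{C}\neq 1}|\mathcal{C}|^{2-k}$. The paper isolates exactly this case ($R_4(G)$, elements with $\pi=(1\ 2)$), shows its contribution is at most $|(1\ 2)^{S_k}|\bigl(1/|T| + |\Out(T)|/p(T)^{k-3}\bigr)$, and closes the argument via $|\Out(T)| \leq Cp(T)^{11/8}$ together with $k-3\geq 2$. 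Your "headroom from $k\geq 5$" intuition is in the right direction, but without an explicit comparison of $|\Out(T)|$ against a power of $p(T)$ the argument does not close. Supplying these two estimates — a prime-order class count polynomial in $p(T)$ across all families, and the $|\Out(T)|$ vs.\ $p(T)$ bound for the transposition term — would turn your sketch into a complete proof along the same lines as the paper's.
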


Explicitly, Theorems \ref{diag prob} and \ref{diag prob k fixed} say the following. Let $\delta$ denote either the symbol $\infty$, or an integer that is at least 5. If $\delta=\infty$, let $\mathscr{D}_\delta$ be the collection of those groups of diagonal type whose top group is not alternating or symmetric, and if $\delta$ is an integer, let $\mathscr{D}_\delta$ be the collection of those groups of diagonal type whose top group has degree $\delta$. For each $\delta$, and for each $G\in\mathscr{D}_\delta$ with socle $T^k$ (where $k=\delta$ if $\delta$ is an integer), let $n_\delta(G)$ denote the proportion of ordered pairs $(\omega_1,\omega_2)$ in $\Omega(k,T)^2$ for which $\{\omega_1,\omega_2\}$ is a base for $G$. Fix $\delta$. Then for every $\varepsilon > 0$, there exists a natural number $N$ such that $n_\delta(G)>1-\varepsilon$ for every $G\in \mathscr{D}_\delta$ satisfying $|G|>N$.

This paper is organised as follows. Section \ref{Preliminaries} gives some basic notation and describes the groups of diagonal type in detail. Theorems \ref{small top}, \ref{base size for alt} and \ref{pyber} are then proved in Section \ref{Base sizes for diagonal type groups}: Theorem \ref{small top} follows from Propositions \ref{small top k>32} and \ref{small top k<32}, while Theorem \ref{base size for alt} essentially follows from Propositions \ref{diagonal base size bound}, \ref{small k} and \ref{lower bound for alt}. The proof of Theorem \ref{diag prob} will take up most of Section \ref{Probabilistic results}, and the proof of Theorem \ref{diag prob k fixed} comes at the end of that section. Note that Sections \ref{Base sizes for diagonal type groups} and \ref{Probabilistic results} are essentially independent of each other. Note also that most of the results  presented in this paper depend upon the classification of the finite simple groups.

\section{Preliminaries}
\label{Preliminaries}

In this paper, all groups are finite and all actions and group homomorphisms are performed on the right. Note that the CFSG refers to the classification of the finite simple groups and that the notation used to denote the finite simple groups is consistent with that of \cite{kl}. 

First we have some basic notation. Let $X$ and $Y$ be groups. We denote the semidirect product of $X$ and $Y$ by $X\rtimes Y$; note that under this notation, $X$ is a normal subgroup of $ X\rtimes Y$, $Y$ acts on $X$, and $(x_1,y_1^{-1})(x_2,y_2)=(x_1x_2^{y_1},y_1^{-1}y_2)$ for all $(x_1,y_1^{-1})$ and $(x_2,y_2)$ in $X\rtimes Y$. If $Y$ acts on $[m]:=\{1,\ldots,m\}$, then $Y$ acts on $X^m$ by permuting the coordinates; that is, the element $y^{-1}$ of $Y$ maps $(x_1,\ldots,x_m)$ to $(x_{1^{y}},\ldots,x_{m^{y}})$
for all $(x_1,\ldots,x_m)\in X^m$. This action defines the wreath product  $X^m\rtimes Y$, which we denote by $X\wr_m Y$. Moreover, if $X$ is a permutation group on a set $\Omega$, then $X\wr_m Y$ acts on $\Omega^m$ by sending $(\omega_1,\ldots,\omega_m)$ to $(\omega_{1^{y}}^{x_{1^{y}}},\ldots,\omega_{m^{y}}^{x_{m^{y}}})$ for each $(x_1,\ldots,x_m)y^{-1}\in X\wr_m Y$. This is called the \textit{product action}. As is standard, we denote the stabiliser in $X$ of the point $\omega\in \Omega$  by $X_\omega$, the conjugacy class of $x\in X$ by $x^X$, and the set of right cosets of the subgroup $Y$ of $X$ by $(X:Y)$. Note that $A_m$ and $S_m$ respectively denote the alternating group and the symmetric group on the set $[m]$. Also, if $x,y\in X$, then $[x,y]=x^{-1}y^{-1}xy$, and if $\alpha\in \Aut(X)$, we write $\overline{\alpha}$ for the coset $\alpha \Inn(X)$ in the outer automorphism group $\Out(X)$. Lastly, the function $\log{x}$ denotes the natural logarithm unless otherwise specified.

The following definitions for groups of diagonal type can be found in \cite{lps}. For an integer $k\geq 2$ and a finite non-abelian simple group $T$, we define
$$
\begin{array}{rl}
 W(k,T)\!\!\!\!&:=\{(\alpha_1,\ldots,\alpha_k)\pi\in \Aut(T)\wr_k S_k:\overline{\alpha}_1=\overline{\alpha}_i \ \mbox{for all}\ i\},\\
D(k,T)\!\!\!\!&:=\{(\alpha,\ldots,\alpha)\pi\in \Aut(T)\wr_k S_k\},\\
\Omega(k,T)\!\!\!\!&:=(W(k,T):D(k,T)),\\
A(k,T)\!\!\!\!&:=W(k,T)\cap \Aut(T)^k.\\
\end{array}
$$
 Note that $W(k,T)=A(k,T)\rtimes S_k$ and that $W(k,T)$ is an extension of $T^k$ by $\Out(T)\times S_k$. Moreover, $W(k,T)$ acts faithfully on the right coset space $\Omega(k,T)$ since $\Inn(T)^k$ is the unique minimal normal subgroup of $W(k,T)$.

We say that a group $G$ has \textit{diagonal type} if there exists an integer $k$ and a finite non-abelian simple group $T$ such that $\Inn(T)^k\leq G\leq W(k,T)$ and $G$ acts primitively on $\Omega(k,T)$. Any such $G$ has socle $T^k$ and degree $n:=|T|^{k-1}$. Let $G$ be a subgroup of $W(k,T)$ containing $\Inn(T)^k$, and let $P_G$ denote the subgroup of $S_k$ consisting of those $\pi\in S_k$ for which there exists $(\alpha_1,\ldots,\alpha_k)\in A(k,T)$ such that $(\alpha_1,\ldots,\alpha_k)\pi\in G$. Then $G$ is a group of diagonal type if and only if either (i) $P_G$ is primitive on $[k]$,  or (ii)  $k=2$ and $P_G=\{1\}$ (see \cite[Theorem 4.5A]{dix}). In particular, $W(k,T)$ is a group of diagonal type.  Note that $P_G$ is permutation isomorphic to the image of the action of $G$ on $\{T_1,\ldots, T_k\}$ by conjugation, where $T_{i}$ is the $i$-th direct factor of $\Inn(T)^k$, since for any $w:=(\alpha_1,\ldots,\alpha_k)\pi\in W(k,T)$, we have $w^{-1}T_iw=T_{i\pi}$ for all $i\in[k]$. The group $P_G$ is referred to as the \textit{top group} of $G$. So long as the context prevents any confusion, we write $D$, $W$ and $\Omega$ for $D(k,T)$, $W(k,T)$ and $\Omega(k,T)$ respectively.
  
Let us briefly examine $\Omega$. Its elements have the form $\omega:=D(\alpha_1,\ldots,\alpha_k)\pi$ for some $(\alpha_1,\ldots,\alpha_k)\pi\in W$. Now $(\alpha_i,\ldots,\alpha_i)\pi\in D(k,T)$ for any $i\in [k]$, so fixing $i$ we see that $\omega=D(\alpha_{i\pi^{-1}}^{-1}\alpha_{1\pi^{-1}},\ldots,1,\ldots,\alpha_{i\pi^{-1}}^{-1}\alpha_{k\pi^{-1}})$
 where 1 is in the $i$-th coordinate. Since $\overline{\alpha}_l=\overline{\alpha}_j$ for all $l$ and $j$, elements of $\Omega$ actually have the form $D(\varphi_{t_1},\ldots,\varphi_{t_k})$, where for each $t\in T$, the map $\varphi_t:T\to T$ is defined to be conjugation by $t$. Moreover, every element of $\Omega$ has $|T|$ representatives in $\Inn(T)^k$, and for each element of $\Omega$, we may choose one coordinate to be any element of $\Inn(T)$ should we wish to do so. In particular, fixing the same coordinate and element of $\Inn(T)$ and allowing all $(k-1)$-tuples with entries in $\Inn(T)$ yields the $|T|^{k-1}$ elements of $\Omega$.

\section{Base sizes for diagonal type groups}
\label{Base sizes for diagonal type groups}

For this section, let $G$ be a group of diagonal type with socle $T^k$ where $T$ is a finite non-abelian simple group.  Note that for $g\in G$ and $\mathscr{B}\subset \Omega$, the set $\mathscr{B}^g$ is a base for $G$ precisely when $\mathscr{B}$ is a base. (Indeed, this is true for any action.) Thus by transitivity there is no loss of generality in restricting our attention to those subsets of $\Omega$ that contain $D$. We begin by determining the pointwise stabiliser in $G$ of any two element subset of $\Omega$ containing $D$.

\begin{lemma} 
\label{pwise stab}
Let $\omega:=D(\varphi_{t_1},\ldots,\varphi_{t_k})\in\Omega$ and write $t^{i,j}$ for $t_i^{-1}t_j$. Then for any $j_0\in[k]$, we have
$G_\omega\cap D=\{(\alpha,\ldots,\alpha)\pi\in G: t^{i,j_0}\alpha=t^{i\pi,j_0\pi} \mbox{\ for all}\ i\}.$
\end{lemma}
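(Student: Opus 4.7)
The plan is to translate the stabiliser condition $\omega\cdot g=\omega$ into an equation in $W$ and then reduce it to one in $T$. Setting $w:=(\varphi_{t_1},\ldots,\varphi_{t_k})$ (viewed as an element of $W$ with trivial permutation part), we have $\omega=Dw$. Because $G$ acts on $\Omega=(W:D)$ by right multiplication, an element $g\in G$ fixes $\omega$ exactly when $Dwg=Dw$, equivalently $wgw^{-1}\in D$. So I would compute $wgw^{-1}$ for an arbitrary $g=(\alpha,\ldots,\alpha)\pi$ in $G\cap D$ and identify when the result is diagonal.

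First I would carry out the computation using the wreath-product conventions from Section \ref{Preliminaries}. Coordinate-wise multiplication gives $wg=\bigl((\varphi_{t_1}\alpha,\ldots,\varphi_{t_k}\alpha),\pi\bigr)$, and then multiplying by $w^{-1}=\bigl((\varphi_{t_1^{-1}},\ldots,\varphi_{t_k^{-1}}),1\bigr)$ and pushing $\pi$ past the vector via the coordinate-permuting rule $(x_1,\ldots,x_k)^{\pi^{-1}}=(x_{1^\pi},\ldots,x_{k^\pi})$ yields
\[
wgw^{-1}=\bigl((\varphi_{t_i}\,\alpha\,\varphi_{t_{i^\pi}^{-1}})_{i=1}^{k},\,\pi\bigr).
\]
This lies in $D$ if and only if the $k$ entries of the vector part coincide in $\Aut(T)$, which is the same as asking that the $i$-th entry equal the $j_0$-th entry for every $i$ (for the given fixed $j_0$).

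To put that equality in the form stated in the lemma, I would invoke the standard identity $\varphi_s\alpha=\alpha\varphi_{s\alpha}$ for $s\in T$ and $\alpha\in\Aut(T)$ to rewrite each $\varphi_{t_i}\alpha\varphi_{t_{i^\pi}^{-1}}$ as $\alpha\,\varphi_{(t_i\alpha)\,t_{i^\pi}^{-1}}$. After cancelling $\alpha$ and applying the inverse of $\varphi:T\to\Inn(T)$ (legitimate because $Z(T)=1$ makes $\varphi$ injective), the equality collapses to $(t_i\alpha)\,t_{i^\pi}^{-1}=(t_{j_0}\alpha)\,t_{j_0^\pi}^{-1}$ in $T$. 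Rearranging and using that $\alpha$ is a homomorphism turns this into $(t_i^{-1}t_{j_0})\alpha=t_{i^\pi}^{-1}t_{j_0^\pi}$, i.e.\ $t^{i,j_0}\alpha=t^{i\pi,j_0\pi}$, exactly as the lemma asserts.

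The one place where a slip is easy is the bookkeeping with the paper's wreath-product convention, specifically the direction in which $\pi$ permutes coordinates when commuting through $w^{-1}$; once the conjugate $wgw^{-1}$ has been computed correctly, the remainder of the argument is a short, formal manipulation using that $\Aut(T)$-translates of $\varphi_s$ are of the form $\varphi_{s\alpha}$ and that $\varphi$ is injective.
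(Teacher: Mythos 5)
Your proof is correct and follows essentially the same route as the paper's. Both reduce the stabiliser condition to the equality of the $k$ vector coordinates of $wgw^{-1}$ (the paper states this equivalence without the explicit $Dwg=Dw \Leftrightarrow wgw^{-1}\in D$ computation you supply) and then use the triviality of $Z(T)$ to pass from $\Inn(T)$ back to $T$; where the paper evaluates the relation $\varphi_{t^{i,j_0}}\alpha=\alpha\varphi_{t^{i\pi,j_0\pi}}$ pointwise and argues via centralisers, you instead invoke the identity $\varphi_s\alpha=\alpha\varphi_{s\alpha}$ and the injectivity of $\varphi$ — a purely cosmetic difference.
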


\begin{proof}
Fix $j_0\in [k]$. Then $(\alpha,\ldots,\alpha)\pi\in G$ fixes $\omega$ if and only if $\varphi_{t_{j_0}}\alpha\varphi_{t_{j_0\pi}}^{-1}=\varphi_{t_i}\alpha\varphi_{t_{i\pi}}^{-1}$ for all $i$. This is equivalent to $\varphi_{t^{i,j_0}}\alpha=\alpha\varphi_{t^{i\pi,j_0\pi}}$ for all $i$. Evaluating this last expression at $t$ for each $t\in T$, we see that it is equivalent to the statement that $(t^{i,j_0}\alpha) (t^{i\pi,j_0\pi})^{-1}$ centralises $t\alpha$ for all $t\in T$. Since the centre of $T$ is trivial, the proof is complete.
\end{proof}

 Lemma \ref{pwise stab} then has the following useful, easy corollary.

\begin{lemma}
\label{baby}
Suppose that $(\alpha,\ldots,\alpha)\pi\in G$ fixes $D(\varphi_{t_1},\ldots,\varphi_{t_k})\in \Omega$. If there exists $j_0$ for which $t_{j_0}$ and $t_{j_0\pi}$ are trivial, then $t_i\alpha=t_{i\pi}$ for all $i$.
\end{lemma}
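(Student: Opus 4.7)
The plan is to apply Lemma \ref{pwise stab} directly with the $j_0$ given in the hypothesis, use the triviality assumptions to simplify both sides of the resulting equation, and then invoke the fact that $\alpha$ is a homomorphism to pass from inverses back to the original elements.

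First I would invoke Lemma \ref{pwise stab}: since $(\alpha,\ldots,\alpha)\pi\in G$ fixes $\omega = D(\varphi_{t_1},\ldots,\varphi_{t_k})$, we get $t^{i,j_0}\alpha = t^{i\pi,j_0\pi}$ for every $i\in[k]$, where by definition $t^{i,j} = t_i^{-1}t_j$. Next I would substitute the hypothesis $t_{j_0}=t_{j_0\pi}=1$, which collapses the left-hand factor to $t^{i,j_0} = t_i^{-1}$ and the right-hand factor to $t^{i\pi,j_0\pi} = t_{i\pi}^{-1}$. The displayed identity therefore becomes
\[
t_i^{-1}\alpha = t_{i\pi}^{-1} \qquad \text{for all } i\in[k].
\]

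Finally, since $\alpha\in\Aut(T)$ is a group homomorphism it respects inversion, so $t_i^{-1}\alpha = (t_i\alpha)^{-1}$. Combining this with the displayed equation gives $(t_i\alpha)^{-1} = t_{i\pi}^{-1}$, and inverting both sides yields $t_i\alpha = t_{i\pi}$ for all $i$, as required. There is no substantive obstacle in this argument; it is purely a matter of unwinding the notation of Lemma \ref{pwise stab} under the triviality hypothesis on the two coordinates.
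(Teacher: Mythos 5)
Your proof is correct and is precisely the intended "easy corollary" argument that the paper leaves to the reader: apply Lemma \ref{pwise stab} with the given $j_0$, note that $t_{j_0}=t_{j_0\pi}=1$ reduces $t^{i,j_0}$ and $t^{i\pi,j_0\pi}$ to $t_i^{-1}$ and $t_{i\pi}^{-1}$, and use that $\alpha$ preserves inversion.
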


Lemma \ref{pwise stab} motivates the following notation. For $\omega:=D(\varphi_{t_1},\ldots,\varphi_{t_k})\in\Omega$, let $\mathcal{O}_\omega$ denote the $k\times k$ matrix whose $(i,j)$-th entry is the order of $t^{i,j}=t_i^{-1}t_j$.  If $(\alpha,\ldots,\alpha)\pi\in G_\omega$, then since $t^{i,j_0}\alpha=t^{i\pi,j_0\pi}$ for all $i$ for any fixed $j_0$ by Lemma \ref{pwise stab}, the $j_0\pi$-th column of $\mathcal O_\omega$ must be a permutation of the entries of the $j_0$-th column. Note that $\mathcal O_\omega$ is a symmetric matrix whose diagonal entries are all 1.

Now we prove Theorem \ref{small top} for $k>32$. The proof relies mainly on a theorem of Seress \cite{s} which determines precisely when a regular orbit on the power set of the domain of a primitive action exists; his work is based on work by Cameron, Neumann and Saxl \cite{cns} who proved    using the CFSG that such a regular orbit exists for all but finitely many degrees so long as the action is not the natural action of the alternating or symmetric group. This result, as mentioned in the introduction, was used by Gluck, Seress and Shalev \cite{gssh}  to construct a base of size 3  for a group of diagonal type  whose top group has degree at least 33 and does not contain the alternating group  (and a larger base otherwise). However, the proof of Proposition \ref{small top k>32} below proceeds somewhat differently in order to construct a base of size 2.

\begin{prop}
\label{small top k>32}
 If $A_k\nleq P_G$ and $k>32$, then $b(G)=2$.
\end{prop}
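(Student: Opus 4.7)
The plan is to construct an $\omega\in\Omega$ with $\{D,\omega\}$ a base; since $G_D=G\cap D$ and Lemma \ref{pwise stab} expresses $G_\omega\cap D$ in terms of the coordinates $t_i$ of $\omega$, this reduces to a judicious choice of tuple $(t_1,\ldots,t_k)$. The combinatorial input is Seress's theorem \cite{s}: because $P_G$ is primitive on $[k]$, does not contain $A_k$, and $k>32$, some subset $Z\subseteq[k]$ has trivial setwise stabiliser in $P_G$. After replacing $Z$ by its complement if needed I may assume $|Z|\geq\lceil k/2\rceil$, and a short edge-case argument (sketched below) ensures $|Z|\leq k-2$.

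I then set $t_j=1$ for $j\in Z$ and $t_j=x_j$ for $j\in[k]\setminus Z$, where the $x_j$ are distinct non-identity elements of $T$ chosen so that $\{x_j:j\notin Z\}$ generates $T$; this is possible by the CFSG (every finite non-abelian simple group is $2$-generated) since $|[k]\setminus Z|\geq 2$. Given any $(\alpha,\ldots,\alpha)\pi\in G_\omega\cap D$, I distinguish two cases. If $\pi Z\cap Z\neq\emptyset$, choose $j_0\in Z\cap\pi^{-1}Z$; then $t_{j_0}=t_{j_0\pi}=1$ and Lemma \ref{baby} gives $t_i\alpha=t_{i\pi}$ for all $i$. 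Since $\alpha$ fixes $1$ and each $x_j\neq 1$, we deduce $\pi Z=Z$ and so $\pi=1$ by the choice of $Z$; then $\alpha$ fixes every $x_j$ and hence $T=\langle x_j:j\notin Z\rangle$ pointwise, so $\alpha=1$. If instead $\pi Z\cap Z=\emptyset$ then $|Z|\leq k/2$, forcing $|Z|=k/2$ with $k$ even; I apply Lemma \ref{pwise stab} at any $j_0\in Z$ (so $t_{j_0}=1$ and $t_{j_0\pi}=x_{j_0\pi}\neq 1$) and observe that for $i\in Z\setminus\{j_0\}$ the equation $t^{i,j_0}\alpha=t^{i\pi,j_0\pi}$ reduces to $1=x_{j_0\pi}^{-1}x_{i\pi}$. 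Hence $x_{i\pi}=x_{j_0\pi}$, forcing $i=j_0$ by distinctness of the $x_j$, a contradiction.

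The principal obstacle is the size constraint on $Z$: Seress's theorem produces some $Z$ with trivial setwise stabiliser, but not of prescribed size. This causes difficulty only if every such $Z$ has $|Z|\in\{k-1,k\}$. The size $k$ is impossible, and $|Z|=k-1$ corresponds to trivial point-stabilisers in $P_G$, i.e.\ to $P_G$ acting regularly on $[k]$; combined with primitivity on $k>32$ points this forces $P_G$ to be cyclic of prime order $k$, in which case every proper non-empty subset of $[k]$ has trivial setwise stabiliser and any $Z$ of size in $\{2,\ldots,k-2\}$ serves. Beyond this bookkeeping the construction uses only Lemmas \ref{pwise stab} and \ref{baby}, Seress's theorem, and standard CFSG-based generation results for non-abelian simple groups, so the technical heart of the proof lies in the compact case distinction $\pi Z\cap Z\neq\emptyset$ versus $\pi Z\cap Z=\emptyset$.
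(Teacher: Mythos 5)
Your construction requires the $x_j$ (for $j \in [k]\setminus Z$) to be \emph{distinct} non-identity elements of $T$, and you justify ``this is possible'' only by the lower bound $|[k]\setminus Z|\geq 2$. But you also need an upper bound: distinctness forces $|[k]\setminus Z|\leq |T|-1$, and since $Z$ (or its complement) is whatever Seress's theorem hands you, $|[k]\setminus Z|$ can be as large as $\lfloor k/2\rfloor$. There are primitive groups not containing $A_k$ of arbitrarily large degree $k$ while $T$ stays fixed (e.g.\ $\mathrm{PGL}_2(p)$ on the projective line), so the construction as written fails whenever $k > 2(|T|-1)$. This is a genuine gap.

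The repair is small, because you never truly need distinctness. Case~1 only uses that each $x_j$ is non-identity (to get $Z^\pi=Z$) and that the $x_j$ collectively generate $T$. In Case~2, instead of deducing $i=j_0$ from distinctness, note that $x_{i\pi}=x_{j_0\pi}$ holds for \emph{every} $i\in Z$, and since $\pi$ bijects $Z$ onto $[k]\setminus Z$ this forces all the $x_j$ ($j\notin Z$) to coincide — contradicting that a single element cannot generate the non-cyclic group $T$. With that change your argument closes. It is worth noting that the paper's own proof sidesteps the issue from the start: it uses only three values $1,x,y$ of $T$ placed on a three-part partition $\Delta_1\cup\Delta_2\cup\Gamma$, where $\Delta=\Delta_1\cup\Delta_2$ is the Seress set and $|\Delta_1|,|\Delta_2|\neq|\Gamma|$; counting the number of ones in each column of $\mathcal{O}_\omega$ forces $\pi$ to stabilize $\Gamma$ and hence $\Delta$, so $\pi=1$ in one stroke and there is no analogue of your Case~2. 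That approach is bounded in its demands on $T$ from the outset, whereas yours trades the three-block bookkeeping for a case split on whether $\pi$ preserves $Z$.
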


\begin{proof}
Since $P_G$ is primitive and does not contain $A_k$, and also since $k>32$, \cite[Theorem 1]{s} implies that $[k]$ can be partitioned into two non-empty subsets $\Delta$ and $\Gamma$ such that the setwise stabiliser of $\Delta$ in $P_G$ is trivial. Since the setwise stabiliser of $\Gamma$ must then also be trivial, we may assume without loss of generality that $|\Delta|\geq |\Gamma|$. Clearly $|\Delta|\geq 4$, so we may partition $\Delta$ into two non-empty subsets $\Delta_1$ and $\Delta_2$ such that neither $|\Delta_1|$ nor $|\Delta_2|$ is $|\Gamma|$. Let $x$ and $y$ be generators for $T$ (which is possible by \cite{ag}), and  define $t_i$ to be $1$ if $i\in \Delta_1$,  $x$ if $i\in\Delta_2$, and $y$ if $  i\in\Gamma$. Let $\omega:=D(\varphi_{t_1},\ldots,\varphi_{t_k})$.

Let $(\alpha,\ldots,\alpha)\pi$ be an element of $G$ fixing $\omega$.  Define a function $g:\{1,\ldots,k\}\to \mathbb{N}$ by mapping $i$ to  the number of entries in column $i$ of $\mathcal{O_\omega}$ that are 1, where $\mathcal{O_\omega}$ is as defined above. Writing $\Delta_3=\Gamma$, we have $g(i)=|\Delta_j|$ if $i\in \Delta_j$. Then $g(i)\neq g(j)$ for all $i\in \Gamma$ and $j\in\Delta$, but $g(i)=g(i\pi)$ for all $i$ since by Lemma \ref{pwise stab}, the entries of column $i\pi$ are a permutation of the entries of column $i$. Hence $\Gamma\pi=\Gamma$, so $\pi$ is the identity. But then for any $i\in\Delta_1$, $t_{i\pi}=t_i=1$, so by Lemma \ref{baby}, $\alpha$ must fix both $x$ and $y$ and is therefore the identity. Thus $\{D,D(\varphi_{t_1},\ldots,\varphi_{t_k})\}$ is a base  for $G$. 
\end{proof}

As for $k$ smaller than 32, we need some more lemmas. This first lemma will also be useful in the case when the top group is alternating or symmetric.

\begin{lemma}
\label{distinct}
Let $t_1,\ldots,t_k$ denote elements of $T$ such that at least two of the $t_i$ are trivial, at least one is non-trivial, and if $t_i$ and $t_j$ are non-trivial and $i\neq j$ then $t_i\neq t_j$. If $(\alpha,\ldots,\alpha)\pi\in G$ fixes $D(\varphi_{t_1},\ldots,\varphi_{t_k})$, then $t_i\alpha=t_{i\pi}$ for all $i$.
\end{lemma}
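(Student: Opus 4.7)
The plan is to deduce Lemma \ref{distinct} from Lemma \ref{baby} by showing that the hypotheses force the existence of some index $j_0$ with $t_{j_0}=t_{j_0\pi}=1$. Let $I:=\{i\in[k]:t_i=1\}$. By assumption $2\leq |I|<k$, and Lemma \ref{baby} tells us we are done as soon as we prove $\pi$ maps at least one element of $I$ into $I$; in fact I will argue the stronger claim that $\pi$ preserves $I$ setwise.

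First I would read off the column structure of the matrix $\mathcal{O}_\omega$. For a column index $j$, count how many entries of column $j$ equal $1$, i.e.\ count those $i$ with $t_i^{-1}t_j=1$, equivalently $t_i=t_j$. If $j\in I$, this gives $|I|\geq 2$ ones. If $j\notin I$, then $t_j$ is non-trivial, and by the distinctness hypothesis the only $i$ with $t_i=t_j$ is $i=j$ itself, giving exactly one $1$ in the column (the diagonal entry).

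Next I would invoke the observation just after Lemma \ref{pwise stab}: for any $j_0$, the $j_0\pi$-th column of $\mathcal{O}_\omega$ is a permutation of the $j_0$-th column, so these two columns contain the same number of $1$'s. Since columns indexed by $I$ have $|I|\geq 2$ ones while columns indexed outside $I$ have exactly one, $\pi$ must send $I$ to $I$. Because $|I|\geq 2$ is non-empty, we may pick $j_0\in I$; then $j_0\pi\in I$, so $t_{j_0}$ and $t_{j_0\pi}$ are both trivial, and Lemma \ref{baby} yields $t_i\alpha=t_{i\pi}$ for every $i$, completing the proof.

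There is no genuine obstacle here: the whole argument is a short bookkeeping application of the column-permutation property of $\mathcal{O}_\omega$ combined with Lemma \ref{baby}. The only point worth being careful about is using the hypothesis that distinct non-trivial $t_i$'s are distinct — this is exactly what ensures a non-trivial column has only one $1$, so that the column-count distinguishes $I$ from its complement and forces $I$ to be $\pi$-invariant.
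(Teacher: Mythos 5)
Your proof is correct and follows essentially the same approach as the paper: both count the number of entries equal to $1$ in each column of $\mathcal{O}_\omega$, observe that columns indexed by $I=\{i:t_i=1\}$ have at least two such entries while columns indexed outside $I$ have exactly one (using the distinctness hypothesis), conclude from the column-permutation property after Lemma \ref{pwise stab} that $\pi$ preserves $I$, and then finish with Lemma \ref{baby}. The paper merely reduces first to the case $I=\{m+1,\dots,k\}$ and writes out the block structure of $\mathcal{O}_\omega$ explicitly, but the counting argument is identical.
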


\begin{proof}
Let $\omega:=D(\varphi_{t_1},\ldots,\varphi_{t_k})$, and let $r_i$ denote the order of $t_i$. Also, let $m$ be the number of non-trivial $t_i$. To begin, assume that $t_i\neq 1$ if $i\in[m]$ and that $t_i=1$ otherwise. Then
$$\mathcal O_\omega= \left( \begin{array}{cc}
A & B \\
B^T & 1_{k-m} \\
 \end{array} \right),$$
where $A$ is a symmetric $m\times m$ matrix whose diagonal entries are 1 and whose remaining entries are integers at least 2, $B$ is an $m\times (k-m)$ matrix with $i$-th row $(r_i,\ldots,r_i)$, and $1_{k-m}$ is a $(k-m)\times (k-m)$ matrix in which every entry is 1. Since $k-m\geq 2$, columns $m+1$ through $k$ each have at least two entries that are 1, and these are the only such columns; hence $\pi$ must permute these columns, which implies that $t_{i\pi}=1$ for $i\geq m+1$. The result then follows from Lemma \ref{baby}. The proof of the general case is essentially the same since then the entries in each column of $\mathcal O_\omega$ will be a permutation of the entries in a column of the matrix above.
\end{proof}

A result of Malle, Saxl and Weigel   \cite{msw} states that every finite non-abelian simple group other than $U_3(3)$ is generated by an involution and a strongly real element. Since $U_3(3)$ is generated by an involution and an element of order 6 by \cite{con}, it follows that every finite non-abelian simple group is generated by two elements, one of which can be taken to be an involution. Since two involutions generate a dihedral group, the two generators must have different orders. This makes the next two lemmas useful. For $x,y\in T$, let $T(x,y)$ denote the set of non-trivial elements of $T$ whose orders are different to the orders of $x$ and $y$.

\begin{lemma}
\label{top base 2}
Suppose that $T=\langle x,y\rangle$ where $x$ and $y$ have different orders, and suppose that $k\geq 4$ and $P_G\neq S_k$. If $P_G$ has base size at most $|T(x,y)|+2$ in its action on $[k]$, then $b(G)=2$.
\end{lemma}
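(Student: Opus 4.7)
The plan is to build a two-element base $\{D,\omega\}$ with $\omega:=D(\varphi_{t_1},\ldots,\varphi_{t_k})$ for a carefully chosen tuple $(t_1,\ldots,t_k)$. First I would take a base $\{i_1,\ldots,i_b\}$ for $P_G$ with $b:=b(P_G)\leq|T(x,y)|+2$; after enlarging it if necessary I may assume $b\geq 2$. I would set $t_{i_1}:=x$, $t_{i_2}:=y$, let $t_{i_3},\ldots,t_{i_b}$ be any $b-2$ pairwise distinct elements of $T(x,y)$ (available since $b-2\leq|T(x,y)|$), and $t_i:=1$ for the remaining $i$. Before applying Lemma \ref{distinct} I must check its hypotheses. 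There is at least one non-trivial $t_i$, and the non-trivial $t_i$ are pairwise distinct because $|x|\neq|y|$ and every element of $T(x,y)$ has order different from both $|x|$ and $|y|$. For the requirement that at least two $t_i$ be trivial, I would use Jordan's classical theorem: a primitive subgroup of $S_k$ containing a transposition equals $S_k$. Since $P_G$ is primitive and $P_G\neq S_k$, the pointwise stabiliser in $P_G$ of any $k-2$ points is trivial, so $b\leq k-2$ and at least two coordinates of $(t_1,\ldots,t_k)$ are trivial.

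Now suppose $(\alpha,\ldots,\alpha)\pi\in G$ fixes $\omega$. Lemma \ref{distinct} gives $t_i\alpha=t_{i\pi}$ for all $i$. Evaluating at $i=i_1$ yields $x\alpha=t_{i_1\pi}$; since $\alpha$ preserves element orders and by construction $i_1$ is the unique coordinate whose label has order $|x|$, I conclude $i_1\pi=i_1$ and hence $x\alpha=x$. The symmetric argument at $i_2$ gives $y\alpha=y$, and since $T=\langle x,y\rangle$ this forces $\alpha=1$. Consequently $t_i=t_{i\pi}$ for every $i$; because the non-trivial $t_i$ are pairwise distinct, $\pi$ must fix each of $i_1,\ldots,i_b$ individually, and since $\{i_1,\ldots,i_b\}$ is a base for $P_G$ we obtain $\pi=1$. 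Thus the pointwise stabiliser of $\{D,\omega\}$ in $G$ is trivial and $b(G)=2$.

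The delicate point is purely combinatorial book-keeping: the two numerical hypotheses in the statement are exactly what make the construction feasible. The bound $b(P_G)\leq|T(x,y)|+2$ ensures there is enough room inside $T(x,y)$ to accommodate the $b-2$ distinct labels that come after $x$ and $y$, and $P_G\neq S_k$ is precisely what guarantees at least two trivial coordinates so that Lemma \ref{distinct} may be invoked. The assumption that $x$ and $y$ have different orders, together with $T(x,y)$ excluding those two orders, is what isolates $i_1$ and $i_2$ as the unique coordinates of their respective orders, which is the key structural input to pinning down $\alpha$.
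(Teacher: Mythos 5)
Your proof is correct and follows essentially the same route as the paper: label a base for $P_G$ with $x$, $y$ and distinct elements of $T(x,y)$, pad the rest with the identity, invoke Jordan's theorem to secure at least two trivial coordinates, and then use Lemma \ref{distinct} together with the order-preserving property of $\alpha$ to pin down $\alpha$ and $\pi$ in turn. The only (harmless) cosmetic difference is that the paper fixes the base to be $\{1,\ldots,m\}$ by relabelling $[k]$, whereas you keep the base indices $i_1,\ldots,i_b$ explicit.
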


\begin{proof}
We may assume without loss of generality that $\{1,2,\ldots,m\}$ is a base of minimal size for $P_G$. Since $P_G$ is primitive and $P_G\neq S_k$, it follows that $P_G$ contains no transpositions; thus we may conclude that $k\geq m+2$. Let $t_1:=x$, $t_2:=y$, $t_i:=1$ for $\mbox{max}\{3,m+1\}\leq i\leq k$, and when $m\geq 3$, choose $t_3,\ldots,t_m$  to be distinct elements of $T(x,y)$. Suppose that $(\alpha,\ldots,\alpha)\pi\in G$ fixes $D(\varphi_{t_1},\ldots,\varphi_{t_k})$. Then the conditions of Lemma \ref{distinct} are met, so $t_i\alpha=t_{i\pi}$ for all $i$. But $\alpha$ preserves order, so  $\alpha$ fixes $x$ and $y$ and is therefore the identity. Then since the $t_i$ are distinct for $i\in[m]$, $\pi$ is the identity on $[m]$. Hence $\pi$ is the identity, and it follows that $\{D,D(\varphi_{t_1},\ldots,\varphi_{t_k})\}$ is a base  for $G$.
\end{proof}

There is a classical result of Bochert \cite{boc} from the nineteenth century which states that every primitive permutation group of degree $k$ that does not contain $A_k$ has a base of size at most $k/2$ (see \cite[Theorem 3.3B]{dix} for a proof). This makes the following consequence of Lemma \ref{top base 2} possible. 

\begin{lemma}
\label{index}
Suppose that $T=\langle x,y\rangle$ where $x$ and $y$ have different orders, and let $C$ be a non-trivial conjugacy class of $T$ with minimal cardinality. If $A_k\nleq P_G$ and $k\leq 2|C|+4$, then $b(G)=2$. 
\end{lemma}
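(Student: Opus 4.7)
The plan is to combine Bochert's theorem with Lemma \ref{top base 2}. By Lemma \ref{top base 2}, $b(G) = 2$ as soon as $k \geq 4$, $P_G \neq S_k$, and $P_G$ admits a base of size at most $|T(x,y)| + 2$ on $[k]$. Any primitive subgroup of $S_k$ with $k \leq 4$ already contains $A_k$, and for $k = 2$ we have $P_G = 1 = A_2$, so the hypothesis $A_k \nleq P_G$ forces $k \geq 5$; in particular both $k \geq 4$ and $P_G \neq S_k$ hold automatically. Bochert's theorem then yields $b(P_G) \leq k/2$, and combining this with the hypothesis $k \leq 2|C| + 4$ gives
\begin{equation*}
b(P_G) \;\leq\; \frac{k}{2} \;\leq\; |C| + 2,
\end{equation*}
so it will suffice to show that $|C| \leq |T(x,y)|$.

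Observe that $T(x,y)$ is a union of conjugacy classes of $T$, since conjugation preserves element order. Consequently, if $T(x,y)$ is non-empty then it contains some non-trivial conjugacy class of $T$, and by the minimality of $|C|$ among such classes we obtain $|T(x,y)| \geq |C|$, which is precisely what is required. Thus the entire lemma reduces to the single claim that $T(x,y) \neq \emptyset$.

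This last step is the only place where the algebraic structure of $T$ really enters, and I expect it to be the main obstacle. I would prove it by contradiction: suppose every non-trivial element of $T$ has order in $\{|x|,|y|\}$. Then for each prime $p$ dividing $|T|$, Cauchy's theorem produces an element of order $p$, which forces $p \in \{|x|,|y|\}$. But by Burnside's $p^a q^b$-theorem, $|T|$ has at least three distinct prime divisors (since $T$ is non-abelian simple), and these cannot all lie in a two-element set. This contradiction establishes $T(x,y) \neq \emptyset$ and completes the proof.
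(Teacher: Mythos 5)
Your proof is correct and follows the same route as the paper's: Bochert's theorem to get $b(P_G)\leq k/2$, the hypothesis $k\leq 2|C|+4$ to bound this by $|C|+2$, the observation that $|C|\leq|T(x,y)|$ (which the paper justifies with the same remark that three distinct primes divide $|T|$, i.e.\ Burnside's theorem), the deduction $k\geq 5$ from $A_k\nleq P_G$ and primitivity, and finally Lemma \ref{top base 2}. You have merely written out the details that the paper leaves implicit.
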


\begin{proof}
Certainly $|C|\leq |T(x,y)|$ since 3 distinct primes divide $|T|$, while $P_G$ has a base of size at most $k/2$ by Bochert \cite{boc}. Thus the assumption that $k\leq 2|C|+4$ implies that $P_G$ has base size at most $|T(x,y)|+2$ in its action on $[k]$. Note that $k\geq 5$ since $A_k\nleq P_G$ and $P_G$ is primitive. Hence we may apply Lemma \ref{top base 2}.
\end{proof}

\begin{prop}
\label{small top k<32}
 If $A_k\nleq P_G$ and $k\leq 32$, then $b(G)=2$.
\end{prop}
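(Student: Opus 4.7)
My plan is to combine Lemmas~\ref{index} and~\ref{top base 2}: I will apply Lemma~\ref{index} whenever the minimum non-trivial conjugacy class size of $T$ is at least $14$, and treat the single exception $T=A_5$ separately with Lemma~\ref{top base 2}. I first observe that $k\ge 5$: since $P_G$ is primitive on $[k]$ and $A_k\nleq P_G$, and since $A_k$ and $S_k$ are the only primitive permutation groups of degree at most $4$, the values $k\in\{2,3,4\}$ are excluded by hypothesis.

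For $T\neq A_5$, I intend to invoke the CFSG (with the ATLAS for the smallest orders) to assert that the minimum non-trivial conjugacy class size of $T$ is at least $14$. The cases needing direct inspection are tiny: $L_2(7)$ has minimum class size $21$ (the involutions), $A_6$ has minimum $40$, $L_2(8)$ has minimum $56$, and for all larger simple groups the minimum class size grows rapidly. Taking $C$ to be a minimum non-trivial class of $T$, and recalling from the discussion preceding Lemma~\ref{top base 2} that $T$ is generated by an involution and an element of different order, we have $k\leq 32\leq 2|C|+4$, and Lemma~\ref{index} gives $b(G)=2$.

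The case $T=A_5$ needs a separate argument because the minimum class size there is only $12$, so Lemma~\ref{index} would cover only $k\leq 28$. For the remaining values $k\in\{29,30,31,32\}$, I would apply Lemma~\ref{top base 2} directly, taking $x=(1,2)(3,4)$ and $y=(1,2,3,4,5)$: these generate $A_5$, have orders $2$ and $5$, and leave $T(x,y)$ equal to the set of $20$ elements of order $3$ in $A_5$, so $|T(x,y)|+2=22$. Since Bochert's theorem ensures $P_G$ has a base of size at most $k/2\leq 16$ on $[k]$, well below $22$, Lemma~\ref{top base 2} applies and yields $b(G)=2$. The only real obstacle in the plan is the routine CFSG-based verification of the class-size lower bound for $T\neq A_5$, which reduces to inspection of a handful of small simple groups.
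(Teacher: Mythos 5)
Your proof is correct and follows essentially the same route as the paper: Lemma~\ref{index} handles every $T$ other than $A_5$, and $A_5$ is treated separately with Lemma~\ref{top base 2} via Bochert's $k/2$ bound. The one place where the two arguments differ is in how the class-size lower bound is justified: the paper first notes that $|C|\geq p(T)$ (the minimal non-trivial class size is at least the minimal index of a proper subgroup, since centralisers of non-identity elements are proper), which lets it reduce immediately to the finitely many $T$ with $p(T)\leq 13$, embed these in $A_{13}$, list the resulting candidates from the literature, and inspect each; you instead assert the bound $|C|\geq 14$ directly for all simple $T\neq A_5$. That assertion is true (the actual minima are all at least $21$), but as stated it glosses over why only finitely many groups need to be checked, and the $|C|\geq p(T)$ observation is exactly the device that turns your ``grows rapidly'' remark into a rigorous finite verification. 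Two small points: your threshold $14$ is the one actually required by $k\leq 2|C|+4$ at $k=32$ (the paper's ``no class of size less than $13$'' is formally one short, though harmless since the real minima are much larger), and the paper uses $(2,3)$-generation of $A_5$ (so $T(x,y)$ is the $24$ elements of order $5$) where you use $(2,5)$-generation; both choices satisfy Lemma~\ref{top base 2}.
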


\begin{proof}
By Malle, Saxl and Weigel \cite[Theorem B]{msw}, $T$ is generated by elements $x$ and $y$ with different orders. Let $p(T)$ denote the minimal index of a proper subgroup of $T$. Then by Lemma \ref{index}, $G$ has base size 2 if $32\leq 2p(T)+ 4$, so we may assume that $p(T)\leq 13$. Note that $|T|\leq 13!/2$ since $T$ can be embedded in the alternating group on $p(T)$ points. If $T$ is a classical group of Lie type, then values for $p(T)$ can be found in \cite{maz2,vm1}, and if $T$ is an exceptional group of Lie type, then values for $p(T)$ can be found in \cite{v1,v2,v3}. Of course $p(A_m)=m$, and if $T$ is sporadic (and of order less than $13!/2$), then values for $p(T)$ can be found in \cite{con}. Using these, we see that $T$ must be one of $L_2(7)$, $L_2(8)$, $L_2(11)$, $L_3(3)$, $M_{11}$, $M_{12}$ or $A_m$ for $5\leq m\leq13$. However, it can be seen using \cite{con} that, with the exception of $A_5$, none of these groups has a conjugacy class of size less than 13, and so $b(G)=2$ by Lemma \ref{index}. Lastly, $A_5$ is $(2,3)$-generated and has 24 elements of order 5, while $P_G$ has a base of size at most 32/2 by Bochert \cite{boc}, so $b(G)=2$ by Lemma \ref{top base 2}.
\end{proof}

Together, Propositions \ref{small top k>32} and \ref{small top k<32} imply that $b(G)=2$ when $A_k\nleq P_G$, which establishes Theorem \ref{small top}. Note that Pyber's conjecture (Theorem \ref{pyber}) is therefore true when $A_k\nleq P_G$. 

We now move on to consider those diagonal type groups $G$ for which $P_G$ does contain the alternating group $A_k$. Here it is readily seen that we will not always have base size 2: if $k>|T|$, then every element of $\Omega\setminus\{D\}$ is determined by a $k$-tuple of elements of $T$ whose coordinates contain at least one repeat, and so $W(k,T)$ does not have base size 2. In fact, we will see that $b(G)\neq 2$ when $k\geq |T|$. We begin by constructing a base for $G$.

\begin{prop}
\label{diagonal base size bound}
$G$ has a base of size
$$\left  \lceil\frac{\log{(k-|T|+1)}}{\log{|T|}} \right \rceil +2 $$
if $k>|T|$ and a base of size 3 if $5\leq k\leq |T|$.
\end{prop}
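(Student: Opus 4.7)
The plan is to construct an explicit base containing $D$ in each case and verify it by analysing the pointwise stabiliser in $W(k,T)$, which contains that in $G$. For a candidate $\{D, \omega_1, \ldots, \omega_{b-1}\}$ with $\omega_j = D(\varphi_{t_1^{(j)}}, \ldots, \varphi_{t_k^{(j)}})$, I associate to each position $i \in [k]$ the \emph{profile} $P(i) := (t_i^{(1)}, \ldots, t_i^{(b-1)}) \in T^{b-1}$. By Lemma \ref{pwise stab}, an element $(\alpha, \ldots, \alpha)\pi$ lies in the stabiliser precisely when there exists $c \in T^{b-1}$ with $P(i)\alpha = c^{-1} P(i\pi)$ for all $i$, where $\alpha$ acts coordinatewise. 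So the task is to choose profiles such that the induced setwise stabiliser of $\mathcal{P} := \{P(i) : i \in [k]\}$ in $T^{b-1} \rtimes \Aut(T)$ is trivial, with $\pi$ also forced to the identity.

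For $5 \leq k \leq |T|$, I take $b = 3$. Using Malle--Saxl--Weigel together with the $U_3(3)$ remark before Lemma \ref{top base 2}, pick generators $x, y$ of $T$ with $|x| \neq |y|$. Construct $\omega_1$ with $t_1 = t_2 = 1$, $t_3 = x$, $t_4 = y$, and $t_5, \ldots, t_k$ distinct elements of $T \setminus \{1, x, y\}$ (possible since $k - 4 \leq |T| - 3$). Construct $\omega_2$ analogously but with trivials at positions $3, 4$ and $x, y$ at positions $1, 2$. Lemma \ref{distinct} applied to each $\omega_j$ yields $t_i^{(j)}\alpha = t_{i\pi}^{(j)}$, forcing $\pi$ to preserve both $\{1, 2\}$ and $\{3, 4\}$ setwise. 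Since $\alpha$ preserves orders and $|x| \neq |y|$, the images $\alpha(x), \alpha(y)$ must equal $x, y$ respectively, so $\alpha = 1$; distinctness of the $t_i^{(j)}$ then forces $\pi = 1$.

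For $k > |T|$, set $l := \lceil \log(k - |T| + 1)/\log|T| \rceil$ so that $k \leq |T|^l + |T| - 1$, and take $b = l + 2$. Enumerate $T = \{g_1 = 1, g_2 = x, g_3 = y, g_4, \ldots, g_{|T|}\}$ with $x, y$ as above. Assign profiles as $P(i) = (\mathbf{1}, g_i)$ for $i \leq |T|$ (with $\mathbf{1} = (1, \ldots, 1) \in T^l$), and $P(i) = (f(i), 1)$ for $i > |T|$, where $f$ maps injectively into $T^l \setminus \{\mathbf{1}\}$. Since $P(1) = (\mathbf{1}, 1)$, any translation vector $c$ must lie in $\mathcal{P}$; a short case analysis shows that the shapes $c = (\mathbf{1}, g_{i_0})$ and $c = (f(i_0), 1)$ produce $c \cdot P(i)\alpha \notin \mathcal{P}$ for some $i$ of the opposite group, leaving only $c = (\mathbf{1}, 1)$. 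The stabiliser then reduces to those $\alpha \in \Aut(T)$ preserving $f$'s image under the diagonal action on $T^l$; if $f$'s image is chosen to be $\Aut(T)$-asymmetric then $\alpha = 1$, and distinctness of profiles forces $\pi = 1$.

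The main obstacle is the extremal case $k = |T|^l + |T| - 1$, where $f$ must surject onto the $\Aut(T)$-invariant set $T^l \setminus \{\mathbf{1}\}$, so no asymmetric choice of $f$ alone is possible. I plan to address this by modifying the $(l+1)$-th coordinate at a couple of positions beyond $|T|$, for instance setting $t_{|T|+1}^{(l+1)} = x$ and $t_{|T|+2}^{(l+1)} = y$ (feasible for $k \geq |T| + 2$). This creates distinctive multiplicity patterns in the $(l+1)$-th coordinate of $\mathcal{P}$, and a multiplicity argument on the action of $c_{l+1}\alpha$ forces $c_{l+1} = 1$ together with $\alpha$ fixing $x$ and $y$, hence $\alpha = 1$. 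The borderline subcase $k = |T| + 1$ (so $l = 1$, $b = 3$) is handled separately by an ad hoc construction using Lemma \ref{distinct} with two points, along the lines of Case 1.
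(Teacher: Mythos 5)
Your reformulation via profiles $P(i)\in T^{b-1}$ and the observation that the stabiliser corresponds to a ``translated'' setwise symmetry of $\mathcal{P}$ under $T^{b-1}\rtimes\Aut(T)$ is a sensible and correct framing, and your construction for $5\leq k\leq|T|$ is a correct small variant of the paper's (the paper packs $x,y$ and the third element $z$ of a fresh order into two points rather than using a swapped pair, but the logic via Lemmas \ref{baby} and \ref{distinct} is the same).

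However, the $k>|T|$ case has a genuine gap that is not confined to the extremal value $k=|T|^l+|T|-1$ you single out. You assert that ``if $f$'s image is chosen to be $\Aut(T)$-asymmetric then $\alpha=1$,'' but such a choice is simply not available for a range of $k$. For instance, with $l=1$ and $k=2|T|-2$ one needs $\operatorname{im}(f)\subseteq T\setminus\{1\}$ of size $|T|-2$, so its complement in $T\setminus\{1\}$ is a single element $w$; any automorphism fixing $w$ (e.g.\ $\varphi_w$, or anything in $C_{\Aut(T)}(w)$) preserves $\operatorname{im}(f)$, so no asymmetric choice exists. The same phenomenon occurs whenever the complement of $\operatorname{im}(f)$ in $T^l\setminus\{\mathbf 1\}$ is small and symmetric, not only when it is empty. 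Your proposed repair --- seeding the $(l{+}1)$-th coordinate with $x,y$ at two positions $>|T|$ and running a multiplicity argument on $c_{l+1}\alpha$ --- would plausibly cover many of these, but it is only a sketch: the earlier case analysis that pins $c$ (which used that positions $>|T|$ have last coordinate $1$) no longer excludes $c$ with $c_{l+1}\in\{x,y\}$ once those values appear there, and for small $k-|T|$ (notably $k=|T|+1,|T|+2,|T|+3$) the multiplicity profile does not isolate $c_{l+1}=1$. You also defer $k=|T|+1$ entirely, and you do not verify that the resulting $\omega_j$ are pairwise distinct and distinct from $D$ (for $j\leq l$ this requires some $f(i)_j\neq 1$, which is not automatic for every admissible $f$). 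In contrast, the paper's argument avoids needing any asymmetry of the digit-block: by placing $x,z$ in positions $1,2$ of the third point, it first forces $1\pi=1$ and $2\pi=2$ via Lemma \ref{baby} and order considerations, extracts $\alpha=1$ from the second point, and only then concludes $\pi=1$ from distinctness of the columns --- a sequencing that makes the ``translation vector'' issue disappear. As written, your proposal does not constitute a proof of the $k>|T|$ case.
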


\begin{proof}
Assume that $k\geq 5$. Then $m:=\min{(|T|-1,k-2)}$ is at least 3. Define the positive integer
$$
r:=
\left \{
\begin{array}{ll}
\left  \lceil\frac{\log{(k-|T|+1)}}{\log{|T|}} \right \rceil & \mbox{if} \ \ k> |T|, \\
1 & \mbox{if}\ \ k\leq |T|.
\end{array}
 \right.
$$
For $j$ such that $m<j\leq k$, let $d_{j,0},\ldots, d_{j,r-1}$ denote the first $r$ digits of the base $|T|$ representation of $j-m-1$; this is reasonable since $|T|^{r-1}\leq k-m-1<|T|^r$.  Let $x$ and $y$ be generators for $T$ (by \cite{ag}). Since $|T|$ is divisible by at least 3 distinct primes, we may choose some non-trivial $z$ from $T$ whose order is different to that of $x$ and $y$.  Enumerating the elements of $T$ as $t_0,\ldots,t_{|T|-1}$ where $t_0:=1$, $t_1:=x$, $t_2:=y$ and $t_3:=z$, we may define
$$
u_{i,j}:=
\left \{ 
\begin{array}{ll} 
 t_j & \mbox{if} \ \ i=2\ \ \mbox{and}\ \ 1\leq j\leq m, \\  
x & \mbox{if} \ \ i=3\ \ \mbox{and}\ \ j=1, \\ 
 z & \mbox{if} \ \ i=3\ \ \mbox{and}\ \ j=2, \\
 t_{d_{j,i-3}}& \mbox{if} \ \ 3\leq i\leq r+2\ \ \mbox{and}\ \ m<j\leq k, \\
 1 & \mbox{otherwise.} \
\end{array} 
\right.
$$
For $1\leq i\leq r+2$, let $\omega_i$ denote the element $D(\varphi_{u_{i,1}},\ldots,\varphi_{u_{i,k}})$ of $\Omega$. We claim that $\mathscr{B}:=\{\omega_1,\ldots,\omega_{r+2}\}$ is a base for $G$. Note that $|\mathscr{B}|=r+2$, for if $\omega_i=\omega_{i'}$ for some distinct $i$ and $i'$, then there exists $t\in T$ for which $u_{i,j}=tu_{i',j}$ for all $j$. But then we must have $i,i'\geq 4$, which implies that $t=1$, and so $d_{j,i-3}= d_{j,i'-3}$ for every $j>m$. This is certainly  not the case; for example, take $j=|T|^{i-3}+m+1$.

Since $u_{2,1},\ldots,u_{2,k}$ satisfy the conditions of Lemma \ref{distinct}, we get that $u_{2,j}\alpha=u_{2,j\pi}$ for all $j$, and so $[m]\pi=[m]$. Now $u_{2,3}=z$ has order different to that of $u_{2,1}=x$ and $u_{2,2}=y$, so $3\leq 3\pi\leq m$. Hence $u_{3,3\pi}=1=u_{3,3}$, which implies that $u_{3,j}\alpha=u_{3,j\pi}$ for all $j$  by Lemma \ref{baby}. But $1\pi\leq m$, so $u_{3,1\pi}\in\{x,z,1\}$; this together with the fact that $u_{3,1}\alpha=u_{3,1\pi}$ forces $1\pi=1$. Similarly, $2\pi=2$, but then $u_{2,j}\alpha$ equalling $u_{2,j\pi}$ for $j\in\{1,2\}$ implies that $x\alpha=x$ and $y\alpha=y$, so $\alpha$ is the identity. Moreover, for any $i\geq 4$ we have that $u_{i,1\pi}=u_{i,1}=1$, so it follows from Lemma \ref{baby} that $u_{i,j\pi}=u_{i,j}$ for all $i$ and $j$. In other words, for every $j$, the $j$-th and $j\pi$-th columns of the $(r+2)\times k$ matrix whose $(i,j)$-th entry is $u_{i,j}$ are the same. However, by construction columns $1,\ldots,m$ are distinct from one another, as are columns $m+1,\ldots,k$. Recalling that $[m]\pi=[m]$, it follows that $\pi$ is the identity.
\end{proof}

Note that the CFSG was only used in the proof above to obtain that $T$ is $2$-generated. This assumption can be removed if $k$ is sufficiently larger than $|T|$: let $x_1,\ldots,x_s$ be a set of generators for $T$, and in the construction of $\mathscr{B}$ above, change $x$ to $x_1$, $y$ to $x_2$, and $u_{i+1,2}$ to $x_{i}$ for $3\leq i\leq s$. The proof  remains unchanged until we obtain $x_1\alpha=x_1$ and $x_2\alpha=x_2$. Since $u_{i,1\pi}=u_{i,1}=1$ for $i\geq 4$, Lemma \ref{baby} implies that $u_{i,j}\alpha=u_{i,j\pi}$ for all $i$ and $j$, but $2\pi=2$, so $x_i\alpha=x_i$ for all $i$. The remainder of the proof is the same. To get a crude idea of how large $k$ need be, note that $T$ has a generating set of size at most $\log_2|T|$ (as any finite group does), so we need $\log_2|T|+1$ to be at most $ r+2$ for this argument to work. Hence for $k\geq |T|^{\log_2{|T|}}$, the upper bound on the base size of $G$ in Proposition \ref{diagonal base size bound} is obtained without the CFSG. 

Now we consider small values for $k$. The following will be used when $k=2$.

\begin{lemma}
\label{centralise}
If $\{D,D(\varphi_{t_1},\ldots,\varphi_{t_k})\}$ is a base of size 2 for $G$, then
$\bigcap_{i=1}^kC_T(t_i)=\{1\}.$
\end{lemma}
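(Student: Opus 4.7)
The plan is to prove the contrapositive: assuming $\bigcap_{i=1}^k C_T(t_i)$ contains some non-trivial element $s$, I will construct a non-identity element of $G$ that fixes both $D$ and $\omega:=D(\varphi_{t_1},\ldots,\varphi_{t_k})$, thereby showing that $\{D,\omega\}$ is not a base.

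The candidate stabiliser is the ``diagonal'' element $g:=(\varphi_s,\ldots,\varphi_s)\cdot 1$. This element lives in $\Inn(T)^k\leq G$, so it belongs to $G$, and since $T$ is non-abelian simple we have $Z(T)=1$, so $\varphi_s\neq 1$ and hence $g\neq 1$. Moreover, $g\in D(k,T)$ because all coordinates coincide and the permutation component is trivial, so $g$ fixes the point $D\in\Omega$ (as $G_D=G\cap D$ under the coset action).

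It remains to verify that $g$ fixes $\omega$. Applying Lemma \ref{pwise stab} with $\alpha=\varphi_s$, $\pi=1$, and any choice of $j_0\in[k]$, the condition for $g\in G_\omega\cap D$ reduces to $t^{i,j_0}\varphi_s=t^{i,j_0}$ for every $i$; that is, $s$ must centralise each element $t_i^{-1}t_{j_0}$. But by hypothesis $s$ centralises every $t_i$, hence $s$ centralises every $t_i^{-1}t_{j_0}$, so the condition is automatically satisfied. Thus $g$ is a non-trivial element of $G$ fixing both $D$ and $\omega$, contradicting the assumption that $\{D,\omega\}$ is a base.

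There is no real obstacle here: the statement is essentially a direct corollary of Lemma \ref{pwise stab} once one observes that an element of $T$ centralising every $t_i$ automatically centralises every difference $t_i^{-1}t_j$, and that the corresponding inner automorphism $\varphi_s$ extended diagonally gives a genuine element of the pointwise stabiliser in $G$ of $\{D,\omega\}$.
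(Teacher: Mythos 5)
Your proof is correct and takes essentially the same approach as the paper: both construct the diagonal element $(\varphi_s,\ldots,\varphi_s)\in\Inn(T)^k\leq G$, observe via Lemma \ref{pwise stab} that it fixes $\omega$ because $s$ centralises every difference $t_i^{-1}t_{j_0}$, and conclude from the base hypothesis (and triviality of $Z(T)$) that $s=1$; the only difference is that the paper phrases this directly rather than as a contrapositive.
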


\begin{proof}
If $t\in\bigcap_{i=1}^kC_T(t_i)$, then $(t_i^{-1}t_1)\varphi_t=t_i^{-1}t_1$ for all $i$. But $(\varphi_t,\ldots,\varphi_t)\in G$, and $(\varphi_t,\ldots,\varphi_t)$ fixes $D(\varphi_{t_1},\ldots,\varphi_{t_k})$  by Lemma \ref{pwise stab}, so $t=1$.
\end{proof}

\begin{prop}
\label{small k}
If $P_G=A_k$, then $b(G)=3$ when $k=2$, and $b(G)=2$ when $k$ is 3 or 4. If $P_G=S_k$, then $b(G)\in\{3,4\}$ when $k=2$, and $b(G)\in\{2,3\}$  when $k$ is 3 or 4.
\end{prop}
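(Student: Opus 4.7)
The plan is to treat $k=2$ and $k\in\{3,4\}$ separately, in each case exhibiting explicit subsets of $\Omega$ containing $D$ and verifying via Lemma~\ref{pwise stab} (with Lemma~\ref{distinct} where applicable) that the pointwise stabiliser $G_\omega\cap D$ intersected over the subset is trivial. Throughout, fix generators $x,y\in T$ with $|x|\neq|y|$, whose existence was discussed before Lemma~\ref{top base 2}.

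For $k=2$, the lower bound $b(G)\geq 3$ follows from Lemma~\ref{centralise}: any $\omega\neq D$ may be written as $D(\varphi_1,\varphi_{t_2})$ with $t_2\neq 1$, so the intersection $C_T(1)\cap C_T(t_2)=C_T(t_2)$ contains $t_2$ and is non-trivial. For the upper bound when $P_G=A_2=\{1\}$, the pair $\omega_1=D(\varphi_1,\varphi_x)$ and $\omega_2=D(\varphi_1,\varphi_y)$ forces any $(\alpha,\alpha)\in G\cap D$ fixing both to centralise $x$ and $y$, hence $\alpha=1$, giving a base of size $3$. For $P_G=S_2$ one must also rule out elements of the form $(\alpha,\alpha)(1\,2)\in G\cap D$; adjoining $\omega_3=D(\varphi_x,\varphi_y)$ and applying Lemma~\ref{pwise stab} yields $x\alpha=x^{-1}$ and $y\alpha=y^{-1}$ from $\omega_1,\omega_2$, while $\omega_3$ gives $(y^{-1}x)\alpha=x^{-1}y$; evaluating the left side coordinate-wise produces $yx^{-1}=x^{-1}y$, i.e.\ $xy=yx$, contradicting the non-abelianness of $T=\langle x,y\rangle$. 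This establishes $b(G)\leq 4$.

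For $k\in\{3,4\}$, the idea is to arrange the coordinates of $\omega$ so that Lemma~\ref{distinct} pins down the admissible $\pi\in P_G$ via the ``type'' partition of $[k]$, after which the outer component $\alpha$ is killed by having to fix the generators $x$ and $y$. For $k=4$ take $\omega=D(\varphi_1,\varphi_1,\varphi_x,\varphi_y)$: Lemma~\ref{distinct} yields $t_i\alpha=t_{i\pi}$, so $\pi\in\{1,(1\,2)\}$ and $\alpha=1$; when $P_G=A_4$ the transposition is excluded and $\{D,\omega\}$ is already a base, whereas for $P_G=S_4$ one adjoins $\omega'=D(\varphi_1,\varphi_x,\varphi_1,\varphi_y)$, whose admissible $\pi$ lie in $\{1,(1\,3)\}$, so the intersection is trivial and $\{D,\omega,\omega'\}$ is a base of size $3$. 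For $k=3$ and $P_G=A_3$, only one trivial coordinate is available in $\omega=D(\varphi_1,\varphi_x,\varphi_y)$, so Lemma~\ref{distinct} does not apply; direct application of Lemma~\ref{pwise stab} shows that each non-trivial $3$-cycle forces $y\alpha$ to have order $|x|$, contradicting $|x|\neq|y|$, so $\pi=1$ and $\alpha=1$. For $P_G=S_3$ the pair $\omega_1=D(\varphi_1,\varphi_1,\varphi_x)$ and $\omega_2=D(\varphi_1,\varphi_y,\varphi_1)$ restricts $\pi$ to $\{1,(1\,2)\}\cap\{1,(1\,3)\}=\{1\}$ and trivialises $\alpha$.

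The most delicate step is the $k=2$, $P_G=S_2$ upper bound: the contradiction emerges only after chasing the two inversion conditions through the combined coordinate $(y^{-1}x)\alpha$, and it is the commutator identity $xy=yx$ rather than any order or parity consideration that ultimately exploits the non-abelianness of $T$.
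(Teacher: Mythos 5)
Your proof is correct and follows essentially the same approach as the paper's: Lemma~\ref{centralise} gives the lower bound at $k=2$, and explicit bases built from generators $x,y$ of different orders are verified via Lemmas~\ref{pwise stab} and~\ref{distinct} (with the paper invoking Lemma~\ref{top base 2} for $P_G=A_4$, which you instead inline). Two small wording slips, neither affecting the argument: in the $P_G=\{1\}$ case $\alpha$ is forced to \emph{fix} $x$ and $y$ (not centralise them), and in the $P_G=A_3$ case the two $3$-cycles yield order contradictions on different generators ($y\alpha$ has order $|x|$ for one cycle, $x\alpha$ has order $|y|$ for the other).
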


\begin{proof}
Let $x$ and $y$ be generators for $T$ (by \cite{ag}). Then  $\{D,D(\varphi_x,1),D(\varphi_y,1)\}$ or $\{D,D(\varphi_x,1),D(\varphi_y,1),D(\varphi_{xy},1)\}$ is a base for $G$ when $P_G$ is 1 or $S_2$ respectively by Lemma \ref{pwise stab}. Moreover, $b(G)\neq 2$ in these cases since $\{D,D(\varphi_{t},1)\}$ is not a base for $G$ for any $t\in T$ by Lemma \ref{centralise}. Let $z$ be a non-trivial element of $T$ with order different to that of $x$ and $y$. Then Lemma \ref{distinct} implies that $\{D,D(\varphi_x,1,1),D(1,\varphi_y,1)\}$ or $\{D,D(\varphi_x,\varphi_z,1,1),D(1,1,\varphi_y,1)\}$ is a base for $G$ when $P_G$ is $S_3$ or $S_4$ respectively. Since the natural action of $A_4$ has base size 2, it follows from Lemma \ref{top base 2} and \cite[Theorem B]{msw} that $b(G)=2$ when $P_G=A_4$. This leaves us with the case $P_G=A_3$. By \cite[Theorem B]{msw}, we may assume that $y$ is an involution. Then a  consideration of the matrix $\mathcal{O}_{D(\varphi_x,\varphi_y,1)}$ shows that $\{D,D(\varphi_x,\varphi_y,1)\}$ is a base for $G$. 
\end{proof}

We are now able to prove Pyber's conjecture for groups of diagonal type. 

\begin{proof}[Proof of Theorem \ref{pyber}]
Let $G$ be a group of diagonal type with socle $T^k$. It is well known that $k^k/e^{k-1}\leq k!$ for any integer $k\geq 2$. If $A_k\leq P_G$, then
$$\left (\frac{k|T|}{e} \right ) ^{k-1}\leq \frac{1}{2} \left(\frac{k^k}{e^{k-1}}\right)|T|^{k-1}\leq |P_G||T|^{k-1}\leq |G_D||T|^{k-1}= |G|,$$
from which we obtain
$$\frac{\log{(k|T|/e) }}{\log{|T|}}  \leq \frac{\log{|G|}}{\log{|T|^{k-1}}} .$$
But $k-|T|+1\leq k|T|/e$, so when $A_k\leq P_G$ and $k>|T|$, Proposition \ref{diagonal base size bound} implies that $G$ satisfies Pyber's conjecture and, in particular, the bound in the statement of Theorem \ref{pyber}. Since $b(G)$ is constant and at most 4 when $A_k\nleq P_G$ or $k\leq |T|$ by Propositions \ref{small top k>32}, \ref{small top k<32},  \ref{diagonal base size bound} and \ref{small k}, and since we always have  $\lceil\log{|G|}/\log{|T|^{k-1}}\rceil\geq 2$, the proof is complete.
\end{proof}

In fact, since $\log{|G|}/\log{|T|^{k-1}}\leq b(G)$ (see the introduction), this proof provides a lower bound for $b(G)$ which is the desired bound of Theorem \ref{base size for alt} if $e|T|^l<k\leq |T|^{l+1}$ for some non-negative integer $l$. However, this can be improved upon. To do so, we need to know more about the structure of $G$.

\begin{lemma}
\label{Ak in G}
 Suppose that $A_k\leq P_G$. If there exists an odd integer $s$ with $1<s\leq k$ such that $s$ is relatively prime to the order of every element of $\Out(T)$, then $\Inn(T)^k \rtimes A_k\leq G$. 
\end{lemma}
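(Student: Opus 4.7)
Plan: I would work in the quotient $H := G/\Inn(T)^k$. Since every element of $W(k,T)$ has the form $(\alpha_1,\ldots,\alpha_k)\pi$ with all $\overline{\alpha}_i$ equal in $\Out(T)$, modding out by $\Inn(T)^k$ collapses the tuple to a single element $\overline{\alpha}\in\Out(T)$ on which $\pi$ acts trivially (all coordinates are already equal). Thus $W(k,T)/\Inn(T)^k\cong \Out(T)\times S_k$ as a \emph{direct} product, and under this identification $H$ is a subgroup of $\Out(T)\times S_k$ whose projection to $S_k$ is $P_G$.

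Next I let $L$ be the preimage in $H$ of $A_k\leq P_G$, and set $K:=H\cap(\Out(T)\times\{1\})$, so that $1\to K\to L\to A_k\to 1$ is exact and $K$ is isomorphic to a subgroup of $\Out(T)$. The hypothesis on $s$ is equivalent to $\gcd(s,|\Out(T)|)=1$, and in particular $\gcd(s,|K|)=1$. For a fixed $s$-cycle $\pi\in A_k$ (which lies in $A_k$ because $s$ is odd), let $L_\pi$ be the preimage of $\langle\pi\rangle$ in $L$, a group of order $s|K|$. By Schur–Zassenhaus, $L_\pi$ contains a complement to $K$, i.e.\ an element $h=(\overline{\beta},\pi)$ of order $s$. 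Then $\overline{\beta}^s=1$, and since $|\overline{\beta}|$ is coprime to $s$, $\overline{\beta}=1$. Hence $(1,\pi)\in H$ for every $s$-cycle $\pi\in A_k$.

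The final step is to observe that $s$-cycles generate all of $A_k$: for $k\in\{3,4\}$ the only admissible $s$ is $3$, and $3$-cycles classically generate $A_3$ and $A_4$; for $k\geq 5$, the subgroup generated by all $s$-cycles is a nontrivial subgroup of $A_k$ that is normal in $S_k$, so by simplicity of $A_k$ it is all of $A_k$. Since products in the direct factor are $(1,\pi_1)(1,\pi_2)=(1,\pi_1\pi_2)$, this gives $\{1\}\times A_k\leq H$. Finally, pulling back to $G$: for each $\pi\in A_k$ there is some element $(\alpha_1,\ldots,\alpha_k)\pi\in G$ with all $\alpha_i\in\Inn(T)$, and multiplying on the left by $(\alpha_1^{-1},\ldots,\alpha_k^{-1})\in\Inn(T)^k\leq G$ produces $(1,\ldots,1)\pi\in G$. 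Together with $\Inn(T)^k\leq G$, this yields $\Inn(T)^k\rtimes A_k\leq G$.

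The main conceptual point, and the only place where real content is used, is the Schur–Zassenhaus reduction combined with the observation that the quotient $W(k,T)/\Inn(T)^k$ is a genuine direct product (so that lifts of disjoint $s$-cycles truly multiply without $\Out(T)$-cocycle corrections). The rest is bookkeeping: translating the coprimality hypothesis into $\gcd(s,|K|)=1$ and using the very mild fact that $A_k$ is generated by odd cycles of any admissible length.
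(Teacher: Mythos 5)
Your proof is correct, but it takes a heavier route than the paper does. The paper's argument is a one-line power trick carried out directly in $W(k,T)$: given $(\alpha,\ldots,\alpha)\pi\in G$ with $\pi$ an $s$-cycle and $\overline{\alpha}$ of order $r$ in $\Out(T)$, it uses that constant tuples commute with permutations to compute $((\alpha,\ldots,\alpha)\pi)^r=(\alpha^r,\ldots,\alpha^r)\pi^r$; since $\alpha^r\in\Inn(T)$ and $\Inn(T)^k\leq G$, this gives $\pi^r\in G$, and $\gcd(r,s)=1$ then yields $\pi\in G$. You instead pass to $H:=G/\Inn(T)^k$, observe (correctly) that $W(k,T)/\Inn(T)^k\cong\Out(T)\times S_k$ is a genuine direct product, translate the hypothesis to $\gcd(s,|\Out(T)|)=1$, and invoke Schur--Zassenhaus on the extension $1\to K\to L_\pi\to\langle\pi\rangle\to 1$ to produce a complement whose suitably chosen generator is $(1,\pi)$. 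Both proofs hinge on the same coprimality and on the fact that the $S_k$-part decouples from the $\Out(T)$-part; the paper exploits this concretely via commutativity of constant tuples with $\pi$, while you exploit it via the direct-product structure of the quotient. Your approach is clean and the direct-product observation is a nice conceptual framing, but Schur--Zassenhaus is more machinery than is needed: once you know $H\leq\Out(T)\times S_k$, the same power argument (take $(\overline{\gamma},\pi)\in H$, raise to the power $|\overline{\gamma}|$, use $\gcd(|\overline{\gamma}|,s)=1$) would finish it without appealing to complement existence. The remaining bookkeeping (odd $s$-cycles lie in and generate $A_k$, and pulling $(1,\pi)$ back to $\pi\in G$ by adjusting with an inner tuple) matches the paper's.
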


\begin{proof}
 If $\pi$ is an $s$-cycle, then $\pi\in A_k\leq P_G$, so $(\alpha,\ldots,\alpha)\pi\in G$ for some $\alpha\in \Aut(T)$ whose image $\overline{\alpha}$ in $\Out(T)$ has order $r$, say. Certainly $(\alpha^r,\ldots,\alpha^r)\pi^r\in G$, but $G$ contains $\Inn(T)^k$, so $\pi^r$  is an element of $G$. Hence $\pi$ is as well. As $\pi$ was an arbitrary $s$-cycle, the group $G$ contains every $s$-cycle. But the $s$-cycles generate $A_k$, so $\Inn(T)^k \rtimes A_k\leq G$. 
\end{proof}

The next result provides the lower bound on $b(G)$ of Theorem \ref{base size for alt}. In fact, several other lower bounds are proved under somewhat specialised conditions; this is done to show that the bounds of Theorem \ref{base size for alt} are essentially best possible.

\begin{prop}
\label{lower bound for alt} 
Suppose that $A_k\leq P_G$, and let $l$ be a positive integer. Suppose that either $k>|T|^l$, or $l=1$ and $k=|T|$, or $\Inn(T)^k\rtimes S_k\leq G$ and  $k$ is $|T|^l$ or $|T|^l-1$. Then $b(G)\geq l+2$. 
\end{prop}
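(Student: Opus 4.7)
The plan is by contradiction: suppose $\{D, \omega_1, \ldots, \omega_l\}$ is a base of size $l+1$ for $G$ and exhibit a non-identity element of $G$ that fixes every member. By transitivity we may assume $D$ lies in the base. Choosing $D$-coset representatives so that $\omega_s = D(\varphi_{t_{s,1}}, \ldots, \varphi_{t_{s,k}})$ with $t_{s,1} = 1$, assemble the data into columns $\mathbf{c}_j := (t_{1,j}, \ldots, t_{l,j}) \in T^l$; then $\mathbf{c}_1 = \mathbf{1}$, and Lemma \ref{pwise stab} applied with $j_0 = 1$ translates the stabiliser condition for $(\alpha, \ldots, \alpha)\pi \in G$ into
\[ \mathbf{c}_{i\pi} = \mathbf{c}_{1\pi} \cdot \alpha(\mathbf{c}_i) \qquad (i \in [k]), \]
where $\alpha$ acts componentwise on $T^l$. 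The goal is to produce a non-trivial such pair $(\alpha, \pi)$ under each of the three hypotheses, whence $\mathscr{B}$ fails to be a base.

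Since the column space $T^l$ has only $|T|^l$ elements, pigeonhole forces $\mathbf{c}_i = \mathbf{c}_j$ for some $i \neq j$ whenever $k > |T|^l$ (case (i)). When several repetitions occur --- for instance three equal columns, or two disjoint pairs of equal columns --- a $3$-cycle or a double transposition in $A_k$ together with $\alpha = 1$ satisfies the displayed condition directly; membership of the resulting element in $G$ follows from Lemma \ref{Ak in G} or from adjusting $\alpha$ within its coset $\alpha_\pi A_G$ (which contains all of $\Inn(T)$ since $\Inn(T)\leq A_G$). In the tight regime where pigeonhole only guarantees one pair of equal columns, one exploits the presence of $\mathbf{c}_1 = \mathbf{1}$: if the repeated value is itself $\mathbf{1}$ then a $3$-cycle through position $1$ is available, while otherwise one takes $\alpha = \varphi_c$ for a non-trivial $c \in T$, rewriting the condition as $\mathbf{c}_{i\pi} = \mathbf{c}_{1\pi} \cdot c^{-1}\mathbf{c}_i c$, and uses the $\Aut(T) \ltimes T$-orbit structure on the column multiset to extract a compatible even permutation. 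Case (ii), with $l = 1$ and $k = |T|$, is handled by the same method: either the $t_j$ are not all distinct, or the bijection $[k] \leftrightarrow T$ allows $\pi$ to be realised as an even permutation of $T$ of the form $x \mapsto u\alpha(x)$ for a suitable pair $(u, \alpha)$.

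Case (iii) is cleaner because $\Inn(T)^k \rtimes S_k \leq G$ ensures $(1, \ldots, 1)\pi \in G$ for every $\pi \in S_k$, eliminating parity concerns. If two columns coincide, a transposition with $\alpha = 1$ suffices. Otherwise all $k$ columns are distinct: when $k = |T|^l$ they exhaust $T^l$, and for any $\mathbf{u} \neq \mathbf{1}$ the translation $\mathbf{x} \mapsto \mathbf{u}\mathbf{x}$ of $T^l$ transports through the bijection $j \leftrightarrow \mathbf{c}_j$ to a non-trivial $\pi \in S_k$ satisfying the displayed condition with $\alpha = 1$; when $k = |T|^l - 1$ the columns miss exactly one value $\mathbf{v} \in T^l$, necessarily different from $\mathbf{1}$ because $\mathbf{c}_1 = \mathbf{1}$ is in the image, so one takes $\alpha = \varphi_s$ for some $s \notin \bigcap_i C_T(v_i)$ (a proper subgroup of $T$ since $T$ is centreless and $\mathbf{v} \neq \mathbf{1}$) and $\mathbf{u} := \mathbf{v} \cdot \alpha(\mathbf{v})^{-1} \neq \mathbf{1}$, yielding the required non-trivial stabiliser.

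The main obstacle is the parity constraint in case (i) when $P_G = A_k$ and only a single pair of equal columns is forced by pigeonhole, since the natural stabilising transposition is odd. The resolution combines the repetitions involving position $1$ with the freedom to choose $\alpha$ in its coset $\alpha_\pi A_G$, so that an even $\pi \in A_k$ can be paired with a conjugation-type $\alpha$ realising the fix condition while keeping $(\alpha, \ldots, \alpha)\pi$ inside $G$. Once such $\pi$ and $\alpha$ are chosen, membership of $(\alpha, \ldots, \alpha)\pi$ in $G$ is automatic from $\pi \in P_G$ and $\alpha \in \alpha_\pi A_G$, completing the contradiction with $\mathscr{B}$ being a base.
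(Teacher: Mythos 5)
Your overall strategy --- pigeonhole on the multiset of columns, dispatch the easy multi-repeat cases with an even permutation and $\alpha=1$, and in the tight cases construct an affine-type symmetry of the column multiset --- is essentially the paper's, and your treatment of case (iii) is concrete and correct. The gaps are in the tight regime of case (i) and in case (ii). First, the claim that ``if the repeated value is itself $\mathbf{1}$ then a 3-cycle through position 1 is available'' is false: in the tight case $k=|T|^l+1$ with exactly one repeated pair, only \emph{two} columns equal $\mathbf{1}$, and a 3-cycle $(1\;j\;m)$ together with $\alpha=1$ satisfies your displayed condition only if $\mathbf{c}_1=\mathbf{c}_j=\mathbf{c}_m$, which is the three-identical-columns case you have already set aside. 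What actually works here is the same device you apply in case (iii): any non-trivial $\alpha\in\Inn(T)$ fixes $\mathbf{1}$, hence stabilises the multiset, and the two repeated positions give the freedom to make the induced permutation even. Second, the appeal to ``the $\Aut(T)\ltimes T$-orbit structure on the column multiset to extract a compatible even permutation'' and, in case (ii), to the unproved \emph{existence} of an even permutation of $T$ of the form $x\mapsto u\alpha(x)$, do not constitute proofs. You must exhibit a concrete non-trivial $(\mathbf{u},\alpha)$ with $\alpha\in\Inn(T)$ that stabilises the column multiset (in case (ii) with one repeated pair this requires an $\alpha$ fixing a specified non-trivial element of $T$, whose existence uses that centralisers in $T$ of non-identity elements are non-trivial), check the induced $\pi$ lies in $A_k$ (e.g.\ by squaring, as the paper does with $\alpha^2\neq 1$), and then justify why $(\alpha,\ldots,\alpha)\pi\in G$.

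That last point is also left dangling: you cite Lemma \ref{Ak in G} but never verify its hypothesis that some odd $1<s\leq k$ is coprime to $|\Out(T)|$. In the paper this follows because each of the three hypotheses forces $k\geq|T|-1$, which together with the CFSG bound $|\Out(T)|^3<|T|$ (Lemma \ref{Out(T)}) supplies such an $s$. Your alternative remark about ``adjusting $\alpha$ within its coset $\alpha_\pi A_G$'' does not circumvent this: when you take $\alpha=1$, membership of the chosen even $\pi$ in $G$ is precisely what Lemma \ref{Ak in G} provides, and it is not free. With the 3-cycle claim removed, the tight-case constructions made explicit, and the $\Inn(T)^k\rtimes A_k\leq G$ reduction verified, your argument would agree with the paper's in substance; the paper simplifies the bookkeeping by normalising the repeated (or the missing) column to $\mathbf{1}$ rather than $\mathbf{c}_1$, which makes the stabilising $\alpha$ easier to see.
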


\begin{proof}
Suppose that one of the four assumptions on $k$ and $G$ in the statement of the proposition is true. Then certainly $k\geq |T|-1$, but $|\Out(T)|$ is much smaller than $|T|$ by the CFSG (see Lemma \ref{Out(T)}, for example), so we may take the $s$ of Lemma \ref{Ak in G} to  be $|\Out(T)|+1$ if $|\Out(T)|$ is even and $|\Out(T)|+2$ otherwise. Thus $\Inn(T)^k\rtimes A_k\leq G$. 

For ease of notation, let $\mathscr{C}$ denote the set of the $|T|^l$ columns of length $l$ with entries in $T$, and for $M$  an $l\times m$ matrix  with entries  in $T$, let $\mathscr{C}_M$ denote the subset of $\mathscr{C}$ whose elements are the columns of $M$. Note that $\Aut(T)$ acts naturally on $\mathscr{C}$.  Suppose that the columns of $M$ are pairwise distinct. If $\mathscr{C}_M^\alpha= \mathscr{C}_M$ for some $\alpha\in \Aut(T)$, then $\alpha$ determines a permutation on $[m]$; this we denote by $\pi_{\alpha,M}$. Note that for each row $(t_1,\ldots,t_m)$ of $M$, we have $t_i\alpha=t_{i\pi_{\alpha,M}}$ for all $i$.

Choose $l$ distinct elements $\omega_1,\ldots,\omega_l$ from $\Omega\setminus \{D\}$, and let $\mathscr{B}$ be the set $\{\omega_i:1\leq i\leq l\}$. We must show that $\mathscr{B}$ is not a base for $G_D$. For each $i$, let $(t_{i,1},\ldots,t_{i,k})$ be one of the $|T|$ choices of $k$-tuples of elements in $T$ that correspond to $\omega_i$. Let $B$ be the $l\times k$  matrix whose $(i,j)$-th entry is $t_{i,j}$.  Note that for each $i$, $(t,\ldots,t)\omega_i=\omega_i$ for any $t\in T$. This allows us either to choose  one element from $\mathscr{C}$ to be column $j$ of $B$ for any one $j\in [k]$, or, when an element of $\mathscr{C}$ is not in $\mathscr{C}_B$, to choose any one element from $\mathscr{C}$ to be in $\mathscr{C}\setminus\mathscr{C}_B$ (with appropriate repercussions for the columns of $B$ in either case).

Suppose that $B$ has three identical columns, say $j_1$, $j_2$ and $j_3$. Then $(1,\ldots,1)(j_1\ j_2\ j_3)$ is an element of $G_D$ that fixes $\mathscr{B}$ pointwise, so $\mathscr{B}$ is not a base for $G_D$. Similarly, if $B$ has two pairs of identical columns, then $\mathscr{B}$ is not a base for $G_D$, so we may assume that neither scenario occurs in $B$. In particular, $k\leq |T|^l+1$.

Suppose that $B$ has exactly one pair of repeated columns. By relabelling if necessary, we may assume that the indices of these columns are $k-1$ and $k$. If $\Inn(T)^k\rtimes S_k\leq G$, then clearly $\mathscr{B}$ is not a base for $G_D$. Moreover, suppose that $k=|T|^l+1$ or that  $l=1$ and $k=|T|$. Then $\mathscr{C}_B$ is $\mathscr{C}$ in the former case and  $\mathscr{C}\setminus\{(t)\}$ for some $t\in T$ in the latter. We may assume by the note above that every entry of column $k-1$ is the identity, and therefore the same is true for column $k$. Let $B^*$ be the $l\times (k-2)$ matrix whose $j$-th column is the $j$-th column of $B$ for $1\leq j\leq k-2$. If $k=|T|^l+1$, let $\alpha$ be any non-trivial element of $\Inn(T)$, and if $l=1$ and $k=|T|$, let $\alpha$ be any non-trivial element of $\Inn(T)$ that fixes $t$. Then $\mathscr{C}_{B^*}^\alpha= \mathscr{C}_{B^*}$ in either case. Since the columns of $B^*$ are pairwise distinct by assumption,  the permutation $\pi_{\alpha,B^*}$ on $[k-2]$ exists as defined above. Moreover, $\pi_{\alpha,B^*}$ can be made into an even permutation $\pi$ of $[k]$ by either fixing or interchanging $k-1$ and $k$. Then $(\alpha,\ldots,\alpha)\pi\in G_D$. Since $t_{i,j}\alpha=t_{i,j\pi}$ for all $i$ and $j$, Lemma \ref{pwise stab} implies that $(\alpha,\ldots,\alpha)\pi$ fixes $\mathscr{B}$ pointwise. Thus $\mathscr{B}$ is not a base for $G_D$. 

Hence we may assume that the columns of $B$ are pairwise distinct. Then $k\leq |T|^l$. If $k=|T|^l$, then $\mathscr{C}_B=\mathscr{C}$, and if $k=|T|^l-1$, then $\mathscr{C}_B=\mathscr{C}\setminus \{c\}$ for some $c\in \mathscr{C}$; we may assume that all of the entries of $c$ are the identity. Let $\alpha$ be an element of $\Inn(T)$ for which $\alpha^2\neq 1$. Then $\mathscr{C}_B^\alpha=\mathscr{C}_B$ in either case. Again, since the entries of $B$ are pairwise distinct, we have a permutation $\pi:=\pi_{\alpha,B}$ of $[k]$. Since $t_{i,j}\alpha=t_{i,j\pi}$ for all $i$ and $j$, Lemma \ref{pwise stab} implies that $(\alpha,\ldots,\alpha)\pi\in D$ fixes $\mathscr{B}$ pointwise; hence the non-trivial element $(\alpha,\ldots,\alpha)^2\pi^2$ of $\Inn(T)^k\rtimes A_k$  does so as well, and thus $\mathscr{B}$ is not a base for $G_D$. 
\end{proof}

\begin{proof}[Proof of Theorem \ref{base size for alt}]
By Propositions \ref{diagonal base size bound} and \ref{small k}, we have the desired result if $k\leq |T|$, so we may assume that $|T|^l<k\leq |T|^{l+1}$ for some positive integer $l$. The upper bound of Theorem \ref{base size for alt} is immediate from Proposition \ref{diagonal base size bound}, while the lower bound follows from the fact that $b(G)\geq l+2$ by Proposition \ref{lower bound for alt}. If we also assume that $k\leq |T|^{l}+|T|-1$, then the upper bound of Proposition \ref{diagonal base size bound} is equal to $\lceil\log{k}/\log{|T|}\rceil+1$ since $k>|T|^l$ implies that $k-|T|+1> |T|^{l-1}$, so $a_G=1$ and the proof is complete.
\end{proof}

Note that Proposition \ref{lower bound for alt} provides several infinite classes of groups for which the $a_G$ of Theorem \ref{base size for alt} is 2; namely, $a_G=2$ when $k=|T|$ or when $G$ contains $\Inn(T)^k\rtimes S_k$ and  $k$ is $|T|^l$ or $|T|^l-1$ for any positive integer $l$. Additionally, it can be shown that if $m$ is 5 or 6, then $b(\Inn(A_m)^2\rtimes S_2)=3$ while $b(W(2,A_m))=4$. (GAP \cite{GAP4} was used to verify this when $m=6$.) Thus the bound on the base size of Theorem \ref{base size for alt} is essentially best possible. 

Furthermore, Proposition \ref{lower bound for alt} implies that $b(G)\neq 2$ when $k\geq |T|$, and if $2<k<|T|$, then we know that $b(G)\in\{2,3\}$ by Propositions \ref{diagonal base size bound} and \ref{small k}. At this stage, it remains unclear whether we can determine when $b(G)=2$ more precisely than this. The main difficulty here lies with the possibility of the existence of two groups of diagonal type with the same socle and top group but different base sizes; indeed, none of the methods we have seen so far can distinguish the  base sizes of two such groups. However, as mentioned in the introduction, we will see in Section \ref{Probabilistic results} that for a particular fixed $k$ that is at least 5, there are only finitely many groups of diagonal type with a degree $k$ top group which do not have base size 2.

\section{Probabilistic results}
\label{Probabilistic results}

In this section, let $T$ be a finite non-abelian simple group. The following argument has been made by Liebeck and Shalev \cite{lsh1}. Let $G$ be a transitive permutation group on $\Omega$. Let $Q(G,b)$ denote the proportion of $b$-tuples in $\Omega^b$ that are not (ordered) bases for $G$. If $x\in G$, the proportion of points in $\Omega$ that are fixed by $x$ is 
$|\fix(x)|/|\Omega|$,
so the proportion of $b$-tuples that are fixed by $x$ is  $(|\fix(x)|/|\Omega|)^b$. Moreover, if a $b$-tuple is not a base for $G$, then it is fixed by some element in $G$ of prime order. Let $X$ be the set of elements in $G$ of prime order, and let $x_1,\ldots,x_l$ be a set of representatives for the $G$-conjugacy classes of elements in $X$. Then since $|\fix(x)|/|\Omega|=|G_\omega\cap x^G|/|x^G|$ for any $\omega\in \Omega$ by transitivity, we have
$$ Q(G,b)\leq \sum_{x\in X}\left (\frac{|\fix(x)|}{|\Omega|}\right )^b=\sum_{x\in X}\left (\frac{|G_\omega\cap x^G|}{|x^G|}\right )^b=\sum_{i=1}^l \frac{|G_\omega\cap x_i^G|^b}{|x_i^G|^{b-1}}.$$
In particular, it follows that if  
$$\sum_{i=1}^l \frac{|G_\omega\cap x_i^G|^2|C_G(x_i)|}{|G|}\to 0$$
as $|G|\to \infty$ for some $\omega\in \Omega$, then almost any pair of elements in $\Omega$ forms a base for $G$. Note that we may choose $x_1,\ldots,x_l$ to be elements of $G_\omega$ since $|G_\omega\cap x_i^G|=0$ if no $G$-conjugate of $x_i$ lies in $G_\omega$.

Let $G$ be a group of diagonal type with socle $T^k$. Choose a set $R(G)$ of representatives for the $G$-conjugacy classes of elements in the stabiliser $G_D$ of $D=D(k,T)$ in $G$ which have prime order. Define
$$
\begin{array}{rl}
R_1(G)\!\!\!\!&:=\{(\alpha,\ldots,\alpha)\pi\in R(G) : \pi \ \mbox{is fixed-point-free on} \ [k]\}, \\
R_2(G)\!\!\!\!&:=\{(\alpha,\ldots,\alpha)\pi\in R(G) : \pi=1\},\\
R_3(G)\!\!\!\!&:=\{(\alpha,\ldots,\alpha)\pi\in R(G) : \pi\neq 1 \ \mbox{and} \ i\pi=i \ \mbox{for some} \ i\in [k]\},\\ 
\end{array}
$$
and for $1\leq i\leq 3$, define
$$r_i(G):=\sum_{x\in R_i(G)} \frac{|G_D\cap x^G|^2|C_G(x)|}{|G|}.$$
Thus $Q(G,2)\leq r_1(G)+r_2(G)+r_3(G)$. We write $\overrightarrow{\alpha}$ for the tuple $(\alpha,\ldots,\alpha)$ and $C$ for some absolute constant which need not and will not be determined (though it could be). Such methodology will also apply to another absolute constant $c>1$, though it will be  obvious what $c$ needs to be. 

We need to prove the following three lemmas; for the second, note that $p(T)$ denotes the minimal index of a proper subgroup of $T$.

\begin{lemma}  
\label{r_1(G)}
Let $P$ be a primitive subgroup of $S_k$ that does not contain $A_k$, and let $G:=A(k,T)\rtimes P$. Then 
$$r_1(G)\leq \frac{C}{c^k|T|^{\frac{1}{6}}}$$
for some absolute constants $C$ and $c>1$.
\end{lemma}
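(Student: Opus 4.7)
The starting point is to rewrite $r_1(G)$ using the identity $|\fix(x)|/|\Omega| = |G_D\cap x^G|/|x^G|$ which holds by transitivity (and which is used in the preamble of this section):
\[
r_1(G) = \sum_{x\in R_1(G)}\frac{|G_D\cap x^G|^2}{|x^G|} = \frac{1}{|\Omega|}\sum_{g\in G_D^{(1)}}|\fix(g)|,
\]
where $G_D\cong \Aut(T)\times P$ and $G_D^{(1)}$ denotes the set of elements $(\alpha,\pi)$ of prime order in $G_D$ whose top component $\pi$ is fixed-point-free on $[k]$. The argument then splits into a pointwise bound on $|\fix(g)|$ and an enumeration of admissible $(\alpha,\pi)$.

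For the pointwise bound, take $g=\overrightarrow{\alpha}\pi$ of prime order $p$, so that $\alpha^p=1$ and $\pi$ is a disjoint product of $k/p$ $p$-cycles. Pick $j_0$ in an arbitrary $\pi$-cycle and use the representative of $\omega=D(\varphi_{t_1},\ldots,\varphi_{t_k})$ with $t_{j_0}=1$. Lemma~\ref{pwise stab} then reduces the fixed-point condition to $t_{i\pi} = t_{j_0\pi}\cdot t_i\alpha$ for all $i$. Unwinding this recursion around the cycle containing $j_0$ forces every entry in that cycle to be a word in $s := t_{j_0\pi}$, subject to the twisted-norm closing condition $s\cdot (s\alpha)\cdot (s\alpha^2)\cdots (s\alpha^{p-1}) = 1$. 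Within each of the remaining $k/p-1$ cycles the starting entry is free in $\fix_T(\alpha^p)=T$, so
\[
|\fix(g)| \leq n_\alpha(p)\cdot |T|^{k/p-1},
\]
where $n_\alpha(p)$ is the number of solutions of the twisted-norm equation. The trivial bound $n_\alpha(p)\leq |T|$ already yields $|\fix(g)|/|\Omega|\leq |T|^{1-k(p-1)/p}\leq |T|^{1-k/2}$ with the dominant case $p=2$; to extract the extra factor $|T|^{-1/6}$ appearing in the conclusion, I shall sharpen this to $n_\alpha(p)\leq C_1|T|^{5/6}$ using CFSG-based estimates on centralisers and involution counts in finite simple groups (for $p=2$, this counts elements of $T$ inverted by $\alpha$, which is well-studied).

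For the enumeration, the count of admissible $(\alpha,\pi)$ at a given prime $p$ is $(i_p(\Aut(T))+1)\cdot F_p(P)$, where $i_p(\Aut(T))$ is the number of order-$p$ elements in $\Aut(T)$ and $F_p(P)$ is the number of fpf order-$p$ elements in $P$. Applying the same CFSG-based bound $i_p(\Aut(T))\leq C_1|T|^{5/6}$, together with Mar\'oti's theorem (for primitive $P$ with $A_k\nleq P$, $|P|<3^k$ outside a few explicit exceptional families), the $p=2$ contribution to $r_1(G)$ is at most $C_2 F_2(P)\cdot |T|^{11/6-k/2}$, and the terms with $p\geq 3$ decay strictly faster in $k$. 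Choosing $c>1$ small enough that $3c<|T|^{1/2}$ for $|T|$ above the minimum $60$, the factor $(3c)^k|T|^{-k/2}$ decays exponentially in $k$ and converts the estimate into the claimed bound $C/(c^k|T|^{1/6})$, with the $|T|^{-1/6}$ margin coming from combining the $|T|^{11/6-k/2}$ factor with $k\geq 4$.

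The principal obstacle is the regime where $P$ is a primitive group of product-action type $P\leq S_m\wr_r S_r$ with $m$ large: there $|P|$ may be nearly $|S_k|$ and Mar\'oti's bound is essentially tight, so $F_2(P)\leq |P|$ is too weak. In this case I would bypass $|P|$ and use the specific cycle structure of fpf involutions in a wreath product, estimating fpf involutions factor by factor and exploiting that an fpf involution either descends to a fpf involution in $S_r$ (pairing the factors) or restricts to fpf involutions in $S_m$ on each block. A secondary difficulty is the small-$|T|$ regime close to $|T|=60$, where the margin $|T|^{-1/6}$ is tight and the sharpened bound on $n_\alpha(p)$ must be invoked in place of the trivial one; the Mathieu exceptions to Mar\'oti's theorem may also need handling by direct calculation, but these are finite in number and can be absorbed into the constant $C$.
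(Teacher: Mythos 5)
Your opening identity
\[
r_1(G)=\frac{1}{|\Omega|}\sum_{g\in G_D^{(1)}}|\fix(g)|
\]
is correct, and so is the per-element computation: with $g=\overrightarrow{\alpha}\pi$ of prime order $p$ and the representative normalised by $t_{j_0}=1$, Lemma~\ref{pwise stab} gives $t_{i\pi}=t_{j_0\pi}(t_i\alpha)$, iterating around the $j_0$-cycle yields the twisted-norm condition on $s=t_{j_0\pi}$, and the closing conditions on the remaining $k/p-1$ cycles are then automatic, so indeed $|\fix(g)|=n_\alpha(p)|T|^{k/p-1}$. This is a genuinely different route from the paper, which instead bounds $r_1(G)\leq\frac{|G_D|^2}{|G|}\max_x|C_G(x)|$ and feeds in the precise centraliser formula from Lemma~\ref{cc}. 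Your accounting is in fact tighter: the paper's $|G_D|^2$ step is equivalent to bounding $\sum_x|G_D\cap x^G|^2$ by $|G_D|^2$, which leaves a spurious extra factor of $|\Out(T)||P|$ compared to what your identity produces.

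The gap is in what you do next. The claims $n_\alpha(p)\leq C_1|T|^{5/6}$ and $i_p(\Aut(T))\leq C_1|T|^{5/6}$ are asserted, not proved, and they carry the whole argument as you have written it; establishing a uniform $|T|^{5/6}$ bound on twisted-norm fibres over all $\alpha\in\Aut(T)$ and all primes $p$ is a non-trivial CFSG task that you would have to carry out in full. More to the point, you have misdiagnosed where the $|T|^{-1/6}$ comes from, so this effort is unnecessary. Using only the trivial bounds $n_\alpha(p)\leq|T|$ (hence $\max_g|\fix(g)|\leq|T|^{\lfloor k/2\rfloor}$) and $|G_D^{(1)}|\leq|G_D|=|\Out(T)||T||P|$, your identity gives
\[
r_1(G)\leq\frac{|\Out(T)||P|}{|T|^{\lceil k/2\rceil-2}}\leq\frac{4^k}{|T|^{\lceil k/2\rceil-7/3}}=\frac{1}{|T|^{1/6}}\cdot\frac{4^k}{|T|^{\lceil k/2\rceil-5/2}}\leq\frac{60^{5/2}}{|T|^{1/6}}\left(\frac{4}{\sqrt{60}}\right)^k,
\]
where we used $|\Out(T)|<|T|^{1/3}$ (the paper's Lemma~\ref{Out(T)}), the Praeger--Saxl bound $|P|\leq 4^k$ for primitive $P$ not containing $A_k$, and $\lceil k/2\rceil\geq 3$ for $k\geq 5$. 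Since $4/\sqrt{60}<1$, this is exactly the claimed bound, with no case split and no exceptional families. So the $|T|^{-1/6}$ is produced by the outer-automorphism factor and the slack in $\lceil k/2\rceil-7/3$, not by a sharpening of $n_\alpha(p)$.

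Two further remarks. First, your worry about product-action primitive groups and Mar\'oti exceptions is misplaced: the Praeger--Saxl bound $|P|\leq 4^k$ is classification-free and has no exceptions for primitive $P$ with $A_k\nleq P$, and it is all that is needed once you have a single factor of $|P|$ (as you do). The paper has $|P|^2$ in its bound, which is why $4^k$ becomes $16^k$ there and forces $|T|>256$; the paper then has to invoke Babai's $\exp(O(\sqrt{k}(\log k)^2))$ estimate to handle $T\in\{A_5,L_2(7)\}$. Your fixed-point route avoids that split entirely. Second, you should note that $k\geq 5$ is the operative assumption (forced by $P$ being primitive and not containing $A_k$), not $k\geq 4$ as your last paragraph suggests.
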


\begin{lemma}
\label{r_2(G)} 
Let $P$ be a primitive subgroup of $S_k$ where $k\geq 5$, and let $G:=A(k,T)\rtimes P$. Then 
$$r_2(G)\leq \frac{C}{p(T)^{k-\frac{19}{4}}}$$
for some absolute constant $C$.
\end{lemma}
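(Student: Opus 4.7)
The plan is to first derive a clean closed form for $r_2(G)$ and then estimate the resulting sum, using a reduction to the case $k = 5$.

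First, I note that an element of $R_2(G)$ has the form $\overrightarrow{\alpha} = (\alpha,\ldots,\alpha)$ with $\alpha \in \Aut(T)$ of prime order, and two such elements are $G$-conjugate if and only if their defining $\alpha$'s are $\Aut(T)$-conjugate. Since the permutation part is trivial, any $\sigma \in P$ centralises $\overrightarrow{\alpha}$, while a tuple $(\beta_1,\ldots,\beta_k) \in A(k,T)$ centralises $\overrightarrow{\alpha}$ precisely when every $\beta_i$ lies in $C_{\Aut(T)}(\alpha)$. Counting such tuples by first fixing a common $\Inn(T)$-coset and then choosing each coordinate within it, a short calculation gives $|C_G(\overrightarrow{\alpha})| = |C_{\Aut(T)}(\alpha)| |C_T(\alpha)|^{k-1} |P|$. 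Similarly, a $G$-conjugate of $\overrightarrow{\alpha}$ lies in $G_D$ only when its coordinates all agree, which forces $|G_D \cap \overrightarrow{\alpha}^G| = |\alpha^{\Aut(T)}|$. Substituting into the definition of $r_2(G)$ and using $|G| = |\Out(T)| |T|^k |P|$, the factors simplify to give
\[
r_2(G) = \sum_{\alpha \in \Aut(T),\, |\alpha|\text{ prime}} \left(\frac{|C_T(\alpha)|}{|T|}\right)^{k-1}.
\]

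For the estimate, the elementary input is that $\fix_T(\alpha) = C_T(\alpha)$ is a proper subgroup of $T$ for every non-trivial $\alpha \in \Aut(T)$, so $|C_T(\alpha)|/|T| \leq 1/p(T)$. Writing
\[
\left(\frac{|C_T(\alpha)|}{|T|}\right)^{k-1} \leq \frac{1}{p(T)^{k-5}}\left(\frac{|C_T(\alpha)|}{|T|}\right)^4,
\]
it suffices to prove the uniform bound $\sum_\alpha (|C_T(\alpha)|/|T|)^4 \leq C/p(T)^{1/4}$, i.e.\ the case $k=5$; multiplying through by $1/p(T)^{k-5}$ then yields $r_2(G) \leq C/p(T)^{k-19/4}$ as required. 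To establish this $k = 5$ estimate I will case-split on $T$. For $T = A_n$, bucketing $\alpha$ by cycle type gives a contribution of order $1/n^4$ from transpositions and much faster decay from higher-order cycles, so the total is $O(1/p(T)^4)$, which is amply smaller than $C/p(T)^{1/4}$. For $T$ of Lie type, I will use standard bounds on the centraliser orders of prime-order automorphisms (from Liebeck--Shalev and related work) together with the smallness of $|\Out(T)|$ relative to $|T|$ (Lemma \ref{Out(T)}); together these bound both the number of prime-order $\Aut(T)$-classes and the size of the largest centraliser. For $T$ sporadic, a direct check over the finite list suffices.

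The main obstacle is establishing the $k=5$ bound uniformly in $T$: for large $T$ within any family the sum is much smaller than $C/p(T)^{1/4}$, but near the small cases one must balance the polynomial number of prime-order $\Aut(T)$-classes against the size of the largest centraliser, which for involutions in low-rank classical groups can approach the extremal ratio $1/p(T)$. The precise exponent $19/4$ presumably arises from optimising this trade-off in the worst-case Lie-type family, and the argument may require explicit verification for a short list of low-rank exceptional cases where the general asymptotic estimate fails to be sharp enough.
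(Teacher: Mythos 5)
Your closed form for $r_2(G)$ is correct and is a genuinely cleaner way to state what the paper computes.  Using Lemma \ref{conj eqn} and equation (\ref{pi not fpf}) with $\pi=1$, one gets that the term corresponding to a class representative $\alpha$ equals $|\alpha^{\Aut(T)}|\bigl(|C_T(\alpha)|/|T|\bigr)^{k-1}$, so after unfolding the conjugacy classes your formula
\[
r_2(G)=\sum_{\substack{\alpha\in\Aut(T)\\ |\alpha|\text{ prime}}}\left(\frac{|C_T(\alpha)|}{|T|}\right)^{k-1}
\]
is right, as is the reduction to $k=5$ via $|C_T(\alpha)|/|T|\leq 1/p(T)$.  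This is essentially the same skeleton as the paper: bound each factor of $|C_T(\alpha)|/|T|$ by $1/p(T)$, leaving a residual sum that needs to be controlled uniformly over $T$.

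The gap is in the residual sum.  You state that it suffices to show $\sum_\alpha (|C_T(\alpha)|/|T|)^4 \leq C/p(T)^{1/4}$ and then merely gesture at ``standard bounds on centraliser orders'' for Lie type, a bucketing argument for $A_n$, and a finite check for sporadics.  That assertion is precisely the hard part of the lemma, and in fact it is equivalent in content to the paper's bound $|\Out(T)|\,f_p(\Aut(T))\leq C\,p(T)^{11/4}$.  (Indeed, writing $\sum_\alpha (|C_T(\alpha)|/|T|)^4 = \sum_{\mathrm{reps}} |\alpha^{\Aut(T)}|\bigl(|C_T(\alpha)|/|T|\bigr)^4$ and using $|\alpha^{\Aut(T)}|\leq |\Out(T)||T|/|C_T(\alpha)|$ gives $\sum_\alpha (|C_T(\alpha)|/|T|)^4\leq |\Out(T)|\,f_p(\Aut(T))/p(T)^3$, and the $19/4$ exponent drops out of $3-11/4=1/4$.)  Establishing this inequality is where all the work lies: the paper needs the class-number bound $f(T)\leq (6q)^{l(T)}$ from \cite{lp}, the Gallagher-type estimate $f_p(\Aut(T))\leq |\Out(T)| f(T)$, the explicit computation $f_p(S_m)\leq m^2/2$, and — crucially — the separate technical Lemma \ref{p(T) bound}, which is itself a case-by-case verification over the classical and exceptional families using the $p(T)$ tables.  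You acknowledge the obstacle at the end of your proposal, but acknowledging it is not proving it; without carrying out that case analysis (or citing a result that does it), the lemma is not established.  So the architecture of your argument is sound and matches the paper's, but the technical core — the uniform estimate over all simple $T$ — is missing.
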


\begin{lemma}
\label{r_3(G)} 
Let $P$ be a primitive subgroup of $S_k$ that does not contain $A_k$, and let $G:=A(k,T)\rtimes P$. Then 
$$r_3(G)\leq \frac{C}{|T|^{\frac{1}{3}}}\left( \frac{1}{c^k}+\frac{1}{\sqrt{k}} \right)$$
for some absolute constants $C$ and  $c>1$. 
\end{lemma}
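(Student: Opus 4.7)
The plan is to mirror the reduction used for Lemmas \ref{r_1(G)} and \ref{r_2(G)}. Combining the identity $|G_D\cap x^G|/|x^G|=|\fix(x)|/|\Omega|$ with the fact that each $G$-conjugacy class $x^G$ of prime-order elements contributes $|G_D\cap x^G|$ elements to $G_D$ gives
\[
r_3(G)=\sum_{y}\frac{|\fix(y)|}{|\Omega|},
\]
where $y=\overrightarrow{\alpha}\pi$ ranges over all prime-order elements of $G_D$ with $\pi\neq 1$ having at least one fixed point. If $p$ denotes the order of $y$, then $\alpha^p=\pi^p=1$, so $\pi$ is a product of $c\geq 1$ disjoint $p$-cycles together with $f:=k-pc\geq 1$ fixed points.

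Next I would compute the fixed-point ratio explicitly. Choose $j_0\in[k]$ with $j_0\pi=j_0$, and normalise $t_{j_0}=1$ in a coset $\omega=D(\varphi_{t_1},\ldots,\varphi_{t_k})\in\Omega$. Since $t_{j_0\pi}=t_{j_0}=1$, Lemma \ref{baby} applies and yields $t_i\alpha=t_{i\pi}$ for every $i$. Counting: each $p$-cycle of $\pi$ contributes $|T|$ free choices (one entry determines the whole orbit, the consistency $t_i\alpha^p=t_i$ being automatic) and each of the remaining $f-1$ fixed points of $\pi$ contributes $|\fix_T(\alpha)|$ choices, so
\[
\frac{|\fix(y)|}{|\Omega|}=\left(\frac{|\fix_T(\alpha)|}{|T|}\right)^{f-1}|T|^{-(p-1)c}.
\]

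I would then split $r_3(G)$ according to whether $c$ is large or small. For large $c$, the factor $|T|^{-(p-1)c}$ is exponentially small; combined with the Praeger--Saxl bound $|P|\leq 4^k$ and $|\Aut(T)|\leq|T|\,|\Out(T)|$, this produces the $C/(c^k|T|^{1/3})$ term. For small $c$, Babai's lower bound $\mu(P)\geq(\sqrt{k}-1)/2$ on the minimal degree of a primitive group not containing $A_k$ forces $pc\geq(\sqrt{k}-1)/2$, and a CFSG-based fixed-point-ratio bound of the shape $|\fix_T(\alpha)|\leq|T|^{2/3}$ for non-trivial $\alpha\in\Aut(T)$ extracts the $|T|^{-1/3}$ factor from the $\alpha$-sum: using $|\Aut(T)|\cdot|T|^{-(f-1)/3}\cdot|T|^{-(p-1)c}\leq|T|^{-1/3}|\Out(T)|$ whenever $(f-1)/3+(p-1)c\geq 4/3$, and letting Babai's bound supply the remaining $1/\sqrt{k}$ factor.

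The main obstacle will be handling the boundary cases. When $f\in\{1,2\}$ the exponent of $|\fix_T(\alpha)|/|T|$ in the $\alpha\neq 1$ sum is too small to help, and one must instead exploit that $(p-1)c=(k-f)(p-1)/p$ is already large in $k$ to compensate. The case $c=1$ requires separate care: since $P$ is primitive and $A_k\not\leq P$, Jordan's theorem forces any $p$-cycle in $P$ to satisfy $p\geq k-2$, so there are at most a small polynomial in $k$ such $\pi$, which offsets the weaker factor $|T|^{-(p-1)}$. Balancing these edge cases against the bulk contribution, and ensuring uniformity in both the large-$|T|$ and large-$k$ limits, is where the real work lies.
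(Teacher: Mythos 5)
Your opening reduction is correct: $r_3(G)=\sum_y |\fix(y)|/|\Omega|$ over prime-order $y=\overrightarrow{\alpha}\pi\in G_D$ with $\pi\neq 1$ fixing a point, and the formula $|\fix(y)|/|\Omega|=(|\fix_T(\alpha)|/|T|)^{f-1}|T|^{-(p-1)c}$ follows from Lemmas \ref{pwise stab} and \ref{baby} exactly as you say; this is equivalent to the paper's use of Lemmas \ref{conj eqn} and \ref{cc}. After that, however, the proposal goes astray in two ways.

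First, the bound $|\fix_T(\alpha)|\leq|T|^{2/3}$ for all non-trivial $\alpha\in\Aut(T)$ is false. Take $T=A_n$ and $\alpha$ conjugation by a $3$-cycle: then $|\fix_T(\alpha)|=|C_{A_n}(\alpha)|=3|A_{n-3}|$, which is $|T|/O(n^3)$ and hence far larger than $|T|^{2/3}$ once $n$ is moderately large. In fact the paper nowhere uses a sublinear fixed-point bound on $T$. The $|T|^{-1/3}$ factor is manufactured instead from the elementary Lemma \ref{Out(T)} ($|\Out(T)|^3<|T|$): one crudely bounds $|\fix_T(\alpha)|\leq|T|$ and $\sum_\alpha|\alpha^{\Aut(T)}|\leq|\Aut(T)|\leq|T|^{4/3}$, then observes that since $P$ is primitive and $A_k\not\leq P$ it has no transpositions, so $k-r_\pi\geq 2$ and there is a spare $|T|^{1/3}$ to extract. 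You should replace your $|T|^{2/3}$ step with that argument.

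Second, and more seriously, your small-$c$ case cannot close with the tools you list. If $\mu(P)$ is as small as Babai's bound $\mu(P)\geq(\sqrt{k}-1)/2$ permits, then the gain $|T|^{-(p-1)c}\leq|T|^{-\Omega(\sqrt{k})}$ is completely overwhelmed by $|P|\leq 4^k$ (or even Babai's $|P|\leq\exp(4\sqrt{k}(\log k)^2)$) once $|T|$ is bounded, so the product does not tend to $0$. This is precisely the regime where the paper splits off a separate case ($\mu(P)<k/3$) and invokes the Liebeck--Saxl theorem \cite{ls} to pin down $P$ as a subgroup of $S_m\wr_r S_r$ in product action on $\binom{m}{l}^r$ points, then carries out an explicit conjugacy-class count (the $(r,l)=(1,2)$ and $(2,1)$ subcases) to make the sum converge and to produce the $1/\sqrt{k}$ term. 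Your proposal does not mention Liebeck--Saxl at all, and the ``boundary cases'' you flag ($f\in\{1,2\}$, $c=1$ via Jordan's theorem) are not the ones that bite; they are comparatively easy. The genuine work is the structure theory of primitive groups of small minimal degree, which the paper isolates in Lemma \ref{r_3(G)*}.
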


In fact, Lemma \ref{r_3(G)} is primarily a consequence of the following more general result, which we record here and prove separately from Lemma \ref{r_3(G)} as it has applications to the probabilistic side of the base size problem for other types of primitive permutation groups, such as the groups of twisted wreath type (in \cite{faw}).

\begin{lemma}
 \label{r_3(G)*}
Let $P$ be a primitive subgroup of $S_k$ that does not contain $A_k$, and let $T$ be a finite non-abelian simple group. Then for some absolute constants $C$ and $c>1$, we have 
$$  \displaystyle\sum_{\pi \in R(P)} \frac{|\pi^P|}{|T|^{k-r_\pi-\frac{5}{3}}}\leq C\left( \frac{1}{c^k}+\frac{1}{\sqrt{k}} \right)$$
where $R(P)$ denotes a set of representatives for the conjugacy classes of elements of prime order in $P$, and $r_\pi$ denotes the number of cycles in the full cycle decomposition of $\pi$ in $S_k$, including fixed points.
\end{lemma}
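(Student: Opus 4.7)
The plan is to convert the class sum into a sum over individual prime-order elements, use the cycle-count identity to express everything in terms of support sizes, invoke Babai's minimum-degree bound for primitive groups not containing $A_k$, and split the resulting tail into two regimes corresponding to the two terms $1/c^k$ and $1/\sqrt{k}$.

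First, since $r_\pi$ and $|\pi^P|$ are both conjugation invariants, we have
$$ S := \sum_{\pi \in R(P)} \frac{|\pi^P|}{|T|^{k - r_\pi - 5/3}} = |T|^{5/3} \sum_{\pi \in P,\,|\pi|\ \text{prime}} \frac{1}{|T|^{k - r_\pi}}. $$
For $\pi$ of prime order $p$ with $f_\pi$ fixed points and support size $m_\pi := k - f_\pi$, one has $r_\pi = f_\pi + m_\pi/p$, giving $k - r_\pi = m_\pi(p-1)/p \geq m_\pi/2$. Letting $N_m$ denote the number of prime-order elements of $P$ with support of size exactly $m$,
$$ S \leq |T|^{5/3} \sum_{m \geq 1} \frac{N_m}{|T|^{m/2}}. $$

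Next I would apply Babai's theorem: since $P$ is primitive and $A_k \not\leq P$, every non-identity element of $P$ has support of size at least $(\sqrt{k} - 1)/2$, so $N_m = 0$ for $m < (\sqrt{k} - 1)/2$. I would then split the remaining sum at $m = k/4$. For $m \geq k/4$ the inequality $|T|^{-m/2} \leq |T|^{-k/8}$ combined with the trivial bound $N_m \leq |P|$ and a quasi-polynomial bound on $|P|$ for primitive $P$ not containing $A_k$ (Mar\'oti's bound $|P| \leq k^{1 + \log_2 k}$, excluding the product-action and Mathieu exceptions) yields a contribution of at most $C/c^k$ for some absolute $c > 1$. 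For $(\sqrt{k}-1)/2 \leq m < k/4$, the factor $|T|^{-m/2} \leq |T|^{-(\sqrt{k}-1)/4}$ summed geometrically and combined with the same bounds gives a contribution of at most $C \cdot k^{O(\log k)} \cdot |T|^{5/3 - (\sqrt{k}-1)/4}$, which is $O(1/\sqrt{k})$ once $k$ is large enough. The finitely many small values of $k$ where this asymptotic inequality fails can be absorbed into the constant $C$, since $S$ is then uniformly bounded (there are only finitely many relevant primitive groups $P$, and $S \to 0$ as $|T| \to \infty$ for each fixed $P$).

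The principal obstacle is Mar\'oti's product-action exception, where $P$ can be as large as $S_m \wr S_l$ acting on $k = m^l$ points, so $|P|$ is not polynomially bounded in $k$ and the bound $N_m \leq |P|$ is far too crude. Here one must exploit the product structure directly: a prime-order element of $P$ with support of size $m$ can act non-trivially on at most $O(m)$ of the $l$ wreath coordinates (and there on an element of $S_m$ of prime order whose total support must sum to $m$), and a separate count in this regime restores a polynomial bound on $N_m$ sufficient for the argument. The Mathieu exceptions of Mar\'oti's theorem are handled by direct inspection of their conjugacy classes, which completes the analysis.
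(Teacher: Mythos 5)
Your overall strategy (reduce to support size via $k - r_\pi = m_\pi(p-1)/p \geq m_\pi/2$, invoke a classification-free minimal degree bound, use Mar\'oti's order bound, handle exceptional classes separately) is a plausible alternative route. The paper instead uses Liebeck--Saxl's classification of primitive groups with minimal degree below $k/3$, combined with Praeger--Saxl's $|P|\leq 4^k$ and Babai's $|P|\leq\exp(4\sqrt{k}(\log k)^2)$; the essential structure (crude $\max_\pi r_\pi \leq k - \lceil\mu(P)/2\rceil$ bound, then separate handling of the product-action family) is parallel. Two remarks on your version. First, you never verify that the exponent $k - r_\pi - \tfrac{5}{3}$ is positive; this matters because otherwise the sum can diverge as $|T|\to\infty$ for a fixed $P$. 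The paper establishes it by observing that $r_\pi = k-1$ only if $P$ contains a transposition, i.e.\ $P = S_k$, which is excluded.

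The substantive gap is the product-action case, which you correctly identify as the principal obstacle but then only sketch. Your claim that a prime-order element of support $m$ acts nontrivially on at most $O(m)$ of the $l$ wreath coordinates is not the right quantitative statement (for $l=1$ there is only one coordinate, yet this is exactly the hard case), and the promise of a ``polynomial bound on $N_m$'' is not established. Concretely, take $P = S_m$ acting on $2$-subsets, so $k = \binom{m}{2}$ and $\mu(P) = 2(m-2) \approx 2.8\sqrt{k}$. Your bound $S \leq |T|^{5/3 - \mu(P)/4}|P|$ then gives $\approx 60^{5/3 - (m-2)/2}\, m!$, which is unbounded as $m\to\infty$ since $m!$ grows faster than $60^{m/2}$. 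The trivial $N_m \leq |P|$ is therefore genuinely insufficient, and one must actually carry out the refined count relating the internal support on $[m]$ (number $t$ of $p$-cycles) to the induced support on $\binom{m}{2}$, which is what the paper does in its analysis leading to inequalities (7), (8), (9). The paper moreover shows that for the sub-cases $(r,l)=(1,2)$ and $(2,1)$ the reduced inequality in terms of $\mu(P)$ alone is provably too weak, and one must return to the original class-by-class sum (5); your proposal gives no indication of how to handle this. Until that calculation is supplied, the proof is incomplete in exactly the case the lemma is designed to control.
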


If we assume that Lemmas \ref{r_1(G)}, \ref{r_2(G)} and \ref{r_3(G)} are true, then Theorem \ref{diag prob} can be proved easily, as we now see. 

\begin{proof}[Proof of Theorem \ref{diag prob}]
By Lemmas \ref{r_1(G)}, \ref{r_2(G)} and \ref{r_3(G)},
$$
Q((A(k,T)\rtimes P_G),2) \leq C\left ( \frac{1}{c^k|T|^{\frac{1}{6}}}+\frac{1}{p(T)^{k-\frac{19}{4}}}+\frac{1}{|T|^{\frac{1}{3}}c^k}+\frac{1}{|T|^{\frac{1}{3}}\sqrt{k}}\right )
$$
for some absolute constants $C$ and $c>1$. Because $T$ can be embedded in the alternating group on $p(T)$ points, it follows that $p(T)\to\infty$ as $|T|\to\infty$. In addition, we must have $k\geq 5$ since for $k\leq 4$ the only primitive permutation groups of degree $k$ are $A_k$ and $S_k$.  Thus $Q((A(k,T)\rtimes P_G),2)$ converges to 0 as $|T|\to \infty$ or $k\to \infty$. Since $G\leq A(k,T)\rtimes P_G\leq W(k,T)$, any base for $A(k,T)\rtimes P_G$ is also a base for $G$. Thus $Q(G,2)\leq Q((A(k,T)\rtimes P_G),2)$. Also, clearly
$|G|\leq |\Aut(T)||T|^{k-1}|P_G|\leq |T|!|T|^{k-1}k!,$
so $|T|\to\infty$ or $k\to\infty$ when $|G|\to\infty$. Thus $Q(G,2)$ will indeed converge to 0 as $|G|$ tends to infinity.
\end{proof}

In order to prove the three lemmas, we first need to calculate the sizes of conjugacy classes and centralisers of various elements of $D(k,T)$. 

\begin{lemma}
\label{conj eqn}
Let $P$ be a subgroup of $S_k$, let $G:=A(k,T)\rtimes P$, and let $(\alpha,\ldots,\alpha)\pi\in G$ where $\pi$ has a fixed point on $[k]$. Then
$$(\alpha,\ldots,\alpha)\pi^G\cap G_D=\{(\alpha',\ldots,\alpha')\pi':\alpha'\in \alpha^{\Aut(T)},\pi'\in\pi^P\}.$$
In particular, $|(\alpha,\ldots,\alpha)\pi^G\cap G_D|=|\alpha^{\Aut(T)}||\pi^P|$.
\end{lemma}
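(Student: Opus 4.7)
The plan is to prove both inclusions by a direct calculation in the wreath product, exploiting the fixed-point hypothesis to pin down the diagonal entry.

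\textbf{Step 1: Identify $G_D$.} Since the action of $W$ on $\Omega=(W:D)$ is right multiplication on cosets, we have $G_D=G\cap D$. An element of $G=A(k,T)\rtimes P$ has the form $(\beta_1,\ldots,\beta_k)\sigma$ with the $\overline{\beta_i}$ equal in $\Out(T)$ and $\sigma\in P$; it lies in $D$ exactly when all the $\beta_i$ coincide. Hence
$$G_D=\{(\alpha',\ldots,\alpha')\pi':\alpha'\in\Aut(T),\ \pi'\in P\}.$$

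\textbf{Step 2: The inclusion $\supseteq$.} Given $\alpha'=\beta^{-1}\alpha\beta$ with $\beta\in\Aut(T)$ and $\pi'=\sigma^{-1}\pi\sigma$ with $\sigma\in P$, set $g:=(\beta,\ldots,\beta)\sigma$. Then $g\in A(k,T)\rtimes P=G$. Because the tuple $(\beta,\ldots,\beta)$ is fixed by the coordinate-permutation action of any element of $S_k$, a short direct computation in the wreath product (using the convention $(x_1,y_1^{-1})(x_2,y_2)=(x_1 x_2^{y_1},y_1^{-1}y_2)$ of the preliminaries) gives
$$g^{-1}(\alpha,\ldots,\alpha)\pi\,g=(\beta^{-1}\alpha\beta,\ldots,\beta^{-1}\alpha\beta)\sigma^{-1}\pi\sigma=(\alpha',\ldots,\alpha')\pi',$$
so $(\alpha',\ldots,\alpha')\pi'\in(\alpha,\ldots,\alpha)\pi^G\cap G_D$.

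\textbf{Step 3: The inclusion $\subseteq$.} Suppose $h:=g^{-1}(\alpha,\ldots,\alpha)\pi\,g\in G_D$ for some $g=(\beta_1,\ldots,\beta_k)\sigma\in G$. Carrying out the same wreath-product calculation in general (now with the $\beta_i$ not necessarily equal) produces
$$h=(\epsilon_1,\ldots,\epsilon_k)\,\pi^{\sigma},\qquad\text{where}\qquad \epsilon_i=\beta_{i^{\sigma^{-1}}}^{-1}\,\alpha\,\beta_{i^{\sigma^{-1}\pi}}.$$
Re-indexing by $l=i^{\sigma^{-1}}$ shows that the multiset $\{\epsilon_i\}$ equals $\{\beta_l^{-1}\alpha\beta_{l^{\pi}}:l\in[k]\}$. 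The assumption $h\in D$ forces all $\epsilon_i$ to coincide with a common value $\alpha'\in\Aut(T)$. Here the hypothesis that $\pi$ has a fixed point $l_0\in[k]$ is essential: evaluating the common value at $l=l_0$ gives
$$\alpha'=\beta_{l_0}^{-1}\alpha\beta_{l_0}\in\alpha^{\Aut(T)},$$
and the top part $\pi^{\sigma}$ lies in $\pi^{P}$ since $\sigma\in P$. Thus $h=(\alpha',\ldots,\alpha')\pi'$ with $\alpha'\in\alpha^{\Aut(T)}$ and $\pi'\in\pi^P$, which is the desired form.

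\textbf{Step 4: The cardinality.} The description in the statement expresses each element of $(\alpha,\ldots,\alpha)\pi^G\cap G_D$ uniquely as $(\alpha',\ldots,\alpha')\pi'$ with $\alpha'\in\alpha^{\Aut(T)}$ and $\pi'\in\pi^P$, so the cardinality is $|\alpha^{\Aut(T)}|\cdot|\pi^P|$. The main potential pitfall is bookkeeping in the wreath-product conjugation (particularly the coordinate re-indexing by $\sigma$ and $\pi$); once this is handled carefully, invoking the fixed point of $\pi$ to extract a single conjugating element $\beta_{l_0}$ is immediate.
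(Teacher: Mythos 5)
Your proposal is correct and follows essentially the same route as the paper: you show $\supseteq$ by conjugating with the diagonal-plus-permutation element $(\beta,\ldots,\beta)\sigma$, and for $\subseteq$ you expand a general conjugate $g^{-1}(\alpha,\ldots,\alpha)\pi g$, equate coordinates after re-indexing to get $\beta_l^{-1}\alpha\beta_{l\pi}=\alpha'$ for all $l$, and then use the fixed point $l_0$ of $\pi$ to extract $\alpha'=\beta_{l_0}^{-1}\alpha\beta_{l_0}\in\alpha^{\Aut(T)}$ — exactly the step where the fixed-point hypothesis is invoked in the paper. The only difference is that you spell out the wreath-product bookkeeping and the identification $G_D=G\cap D$ more explicitly, which the paper leaves implicit.
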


\begin{proof}
Suppose that $\alpha':=\beta^{-1}\alpha\beta$ for any $\beta\in \Aut(T)$ and $\pi':=\sigma^{-1}\pi\sigma$ for any $\sigma\in P$. Then $(\beta,\ldots,\beta)\sigma$ conjugates $(\alpha,\ldots,\alpha)\pi$ to $(\alpha',\ldots,\alpha')\pi'$ in $G$. On the other hand, if $(\alpha_1,\ldots,\alpha_k)\sigma$ conjugates $(\alpha,\ldots,\alpha)\pi$ to $(\alpha',\ldots,\alpha')\pi'$ in $G$, then $\sigma^{-1}\pi\sigma=\pi'$ and $\alpha_i^{-1}\alpha\alpha_{i\pi}=\alpha'$ for all $i$. Since $\pi$ has a fixed point, the result follows.
\end{proof}

The proof of Lemma \ref{conj eqn} should give the reader some indication of why it is not only convenient to work with the group $A(k,T)\rtimes P$  but also necessary, as we  lose control of the sizes of $R_2(G)$ and $R_3(G)$ for an arbitrary group of diagonal type $G$.

\begin{lemma}
\label{cc}
Let $P$ be a subgroup of $S_k$, let $G:=A(k,T)\rtimes P$, and let $(\alpha,\ldots,\alpha)\pi$ be an element of $G$ of prime order $p$. Then $|C_G((\alpha,\ldots,\alpha)\pi)|$ is either
\begin{equation}
\label{pi fpf}
|C_{P}(\pi)||C_{\Out(T)}(\overline{\alpha})||T|^{\frac{k}{p}}
\end{equation}
if $\pi$ is fixed-point-free on $[k]$, or
\begin{equation}
\label{pi not fpf}
|C_P(\pi)||C_{\Aut(T)}(\alpha)||C_{\Inn(T)}(\alpha)|^{\fix_{[k]}(\pi)-1}|T|^{\frac{1}{p}(k-\fix_{[k]}(\pi))}
\end{equation}
if $\pi$ has a fixed point on $[k]$.
\end{lemma}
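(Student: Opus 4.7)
The plan is to translate the commutation condition $gh = hg$ for $g = (\alpha,\ldots,\alpha)\pi$ and an arbitrary $h = (\beta_1,\ldots,\beta_k)\sigma \in G$ into an explicit system on $\sigma$ and the $\beta_i$. Using the semidirect product multiplication rule from Section~\ref{Preliminaries} and the observation that the constant tuple $(\alpha,\ldots,\alpha)$ is invariant under every element of $S_k$, a direct calculation reduces $gh=hg$ to $\sigma\in C_P(\pi)$ together with the system of relations
\begin{equation*}
\beta_{j^\pi} \;=\; \alpha^{-1}\beta_j\alpha \qquad \text{for all } j\in[k].
\end{equation*}
The choice of $\sigma$ is independent of the choice of the $\beta_i$ and contributes the factor $|C_P(\pi)|$ to the centraliser.

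The next step is to exploit that $g$ has prime order $p$. Since $(\alpha,\ldots,\alpha)$ is centralised by $\pi$, we have $g^p=(\alpha^p,\ldots,\alpha^p)\pi^p$, so $\alpha^p=1$ in $\Aut(T)$ and $\pi^p=1$ in $P$, which forces every cycle of $\pi$ to have length $1$ or $p$. On a cycle $(j_1,\ldots,j_p)$ of length $p$, the recursion iterates to $\beta_{j_t}=\alpha^{-(t-1)}\beta_{j_1}\alpha^{t-1}$ and closes automatically because $\alpha^p=1$, so $\beta_{j_1}$ may be chosen freely in $\Aut(T)$; on a fixed point $j$ the recursion degenerates to $\beta_j\in C_{\Aut(T)}(\alpha)$. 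Passing the recursion to $\Out(T)$ and imposing the $A(k,T)$-constraint $\overline{\beta}_1=\cdots=\overline{\beta}_k=:\overline{\beta}$ then forces $\overline{\beta}\in C_{\Out(T)}(\overline{\alpha})$ in both situations.

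The counting now splits in line with whether $\pi$ is fixed-point-free. In the fixed-point-free case $[k]$ decomposes into $k/p$ cycles of length $p$; I would first pick the common class $\overline{\beta}\in C_{\Out(T)}(\overline{\alpha})$ in $|C_{\Out(T)}(\overline{\alpha})|$ ways, and then on each cycle independently lift $\overline{\beta}$ to any of its $|\Inn(T)|=|T|$ representatives in $\Aut(T)$, giving the factor $|T|^{k/p}$ and hence formula~(\ref{pi fpf}). If $\pi$ has a fixed point, I would distinguish one such $j_0$ and pick $\beta_{j_0}$ freely in $C_{\Aut(T)}(\alpha)$---which simultaneously determines $\overline{\beta}$---in $|C_{\Aut(T)}(\alpha)|$ ways; each of the remaining $\fix_{[k]}(\pi)-1$ fixed points then contributes the fibre $C_{\Aut(T)}(\alpha)\cap \beta_{j_0}\Inn(T)$, which is a coset of $C_{\Aut(T)}(\alpha)\cap\Inn(T)=C_{\Inn(T)}(\alpha)$ and so has $|C_{\Inn(T)}(\alpha)|$ elements; and each of the $(k-\fix_{[k]}(\pi))/p$ length-$p$ cycles contributes the $|T|$ lifts of $\overline{\beta}$ in $\Aut(T)$, yielding formula~(\ref{pi not fpf}).

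The delicate point is the interlocking of the cycle recursion with the $A(k,T)$-constraint: precisely the presence of a fixed point forces the common outer class $\overline{\beta}$ to lift to an element actually centralising $\alpha$ in $\Aut(T)$, rather than merely to one whose image centralises $\overline{\alpha}$ in $\Out(T)$, and this is what accounts for the asymmetry between~(\ref{pi fpf}) and~(\ref{pi not fpf}). Beyond this dichotomy the argument is routine bookkeeping, and no new input beyond Lemma~\ref{conj eqn}-style manipulations in the semidirect product is required.
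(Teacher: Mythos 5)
Your proof is correct and follows essentially the same route as the paper: extract the condition $\sigma\in C_P(\pi)$, reduce to the recursion $\beta_{j\pi}=\alpha^{-1}\beta_j\alpha$, and count the choices cycle-by-cycle against the $A(k,T)$ constraint, with the centraliser-in-$\Inn(T)$ fibres on the surplus fixed points. The only cosmetic difference is that you verify closure of the cycle recursion by passing to $\Out(T)$ and you absorb the $\alpha=1$ case into the general argument, whereas the paper checks $[\alpha_1,\alpha^i]\in\Inn(T)$ by an explicit commutator identity and treats $\alpha=1$ separately.
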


Note that the division into two cases in Lemma \ref{cc} is necessary because there exist $\alpha,\beta\in Aut(T)$ for which $\overline{\beta}\in C_{\Out(T)}(\overline{\alpha})$  but $\beta\notin C_{\Aut(T)}(\alpha)$. In fact, if $\pi$ is fixed-point-free, then the two versions given of $|C_G((\alpha,\ldots,\alpha)\pi)|$ agree precisely when $C_{\Aut(T)}(\alpha)/C_{\Inn(T)}(\alpha)$ is isomorphic to $C_{\Out(T)}(\overline{\alpha})$.

\begin{proof} Let $f_\pi:=\fix_{[k]}(\pi)$, let $c_\pi$ be the number of non-trivial cycles of $\pi$ so that $c_\pi=(k-f_\pi)/p$, and let $r_\pi:=c_\pi+f_\pi$. The element $(\alpha_1,\ldots,\alpha_k)\sigma\in \Aut(T)^k\wr S_k$ is in $G$ and centralises $(\alpha,\ldots,\alpha)\pi$ if and only if all three of the following conditions occur: $\sigma$ centralises $\pi$ in $P$, $\alpha^{-1}\alpha_i\alpha=\alpha_{i\pi}$ for all $i$, and $\alpha_i$ and $\alpha_j$ are in the same coset of $\Inn(T)$ for all $i$ and $j$. There are precisely $|C_P(\pi)|$ elements of $P$ satisfying the first condition, and this condition is independent from the other two, so we assume that $\sigma$ is fixed and count how many occurrences of the latter conditions are possible.

Note that if $\alpha$ is trivial, then $\alpha^{-1}\alpha_i\alpha=\alpha_{i\pi}$ for all $i$ if and only if $\alpha_i=\alpha_j$ whenever $i$ and $j$ are in the same cycle of the full cycle decomposition of $\pi$. Thus there are $|\Out(T)||T|^{r_\pi}$ tuples $(\alpha_1,\ldots,\alpha_k)$ satisfying both conditions. The desired equality then follows in either case for $\pi$, so we may assume that $\alpha$ is non-trivial, in which case $\alpha$ has prime order $p$.

Suppose, first of all, that $i_0$ is moved by $\pi$.  Then $i_0$ is contained in a $p$-cycle in the full cycle decomposition of $\pi$ as $\pi$ must have the same prime order as $\alpha$. Let us assume that this $p$-cycle is $(1 2 \cdots p)$ and that $i_0$ is 1. If  $\alpha^{-1}\alpha_i\alpha =\alpha_{i\pi}$ and $\overline{\alpha}_1=\overline{\alpha}_i$ for all $i$, then, in particular, $[\alpha_1,\alpha]\in \Inn(T)$ and the elements  $\alpha_2,\ldots,\alpha_p$ are determined by $\alpha_1$ and $\alpha$. Conversely, if we are given $\alpha_1\in \Aut(T)$ such that $[\alpha_1,\alpha]\in \Inn(T)$, define $\alpha_{i+1}:=\alpha^{-i}\alpha_1\alpha^i$ for each $i\in[p-1]$. Then $\alpha^{-1}\alpha_i\alpha=\alpha_{i\pi}$ for all $i\in [p]$ since $\alpha$ has order $p$. Moreover, $[\alpha_1,\alpha^i]\in \Inn(T)$ for all $i\in [p]$ since $[\alpha_1,\alpha^i]=[\alpha_1,\alpha^{i-1}]\alpha^{1-i}[\alpha_1,\alpha]\alpha^{i-1}$ for all such $i$; thus $\overline{\alpha}_1=\overline{\alpha}_i$ for all $i\in [p]$. Since this argument does not depend on the choice of $i_0$ or on the letters of the $p$-cycle, and since for $\beta\in \Aut(T)$, $[\beta,\alpha]\in \Inn(T)$ if and only if $\overline{\beta}\in C_{\Out(T)}(\overline{\alpha})$, it follows that there are at most $|C_{\Out(T)}(\overline{\alpha})|$ choices for the coset of $\Inn(T)$ from which the $\alpha_i$ may be chosen, and for each such coset there are at most $|T|$ choices corresponding to each non-trivial cycle of $\pi$. If $\pi$ is fixed-point-free, then all of these choices are possible. Since $c_\pi=k/p$, equation (\ref{pi fpf}) follows.

Suppose then that $\pi$ has a fixed point $i_0$. Certainly $\alpha^{-1}\alpha_i\alpha=\alpha_{i\pi}$ and $\overline{\alpha}_{i_0}=\overline{\alpha}_i$ for all fixed points $i$ if and only if $\alpha_{i_0}\in C_{\Aut(T)}(\alpha)$ and $\alpha_{i_0}^{-1}\alpha_i\in C_{\Inn(T)}(\alpha)$ for all fixed points $i\neq i_0$. Hence there are at most $|C_{\Aut(T)}(\alpha)||C_{\Inn(T)}(\alpha)|^{f_\pi-1}$ choices for $\{\alpha_i:i\pi=i\}$, and if $\pi$ is trivial, then all of these choices are possible, in which case equation (\ref{pi not fpf}) is true. Suppose that $\pi\neq 1$, and let $\{\alpha_i:i\pi=i\}$ be one of the choices described above. Then since $\overline{\alpha}_{i_0}\in C_{\Out(T)}(\overline{\alpha})$ for any $i_0$ fixed by $\pi$, any element of the coset $\overline{\alpha}_{i_0}$ may be chosen to determine the $\alpha_j$ corresponding to any non-trivial cycle of $\pi$ as above. Thus each of the choices for $\{\alpha_i:i\pi=i\}$ not only occurs but does so $|T|^{c_\pi}$ times. Equation (\ref{pi not fpf}) then follows.
\end{proof}

There are several occasions when we will need to bound the number of conjugacy classes of elements of prime order in a group, so we set up some notation for this. Let $X$ be a group. If $\mathcal{C}$ is a union of conjugacy classes of $X$, we write $f_\mathcal{C}(X)$ for the number of conjugacy classes contained in $\mathcal{C}$. Also, we write $f(X)$ for $f_X(X)$, and when $\mathcal{C}$ consists of the elements of prime order in $X$, we write $f_p(X)$ for $f_\mathcal{C}(X)$. Let $Y$ be a subgroup of $X$. Gallagher noted in \cite{gal} that $f(X)\leq [X:Y]f(Y)$ and $f(Y)\leq [X:Y]f(X)$ and gave elementary proofs of these facts. The latter can easily be generalised to $f_\mathcal{C}(Y)$ for any union of conjugacy classes $\mathcal{C}$ in $Y$, which we now do.

\begin{lemma}
 \label{f(X)}
Let $X$ be a group with subgroup $Y$. Let $\mathcal{C}\subseteq \mathcal{C}'$ be unions of conjugacy classes of $Y$ and $X$ respectively. Then
$f_\mathcal{C}(Y)\leq [X:Y]f_{\mathcal{C}'}(X).$
In particular, $f_p(Y)\leq [X:Y]f_p(X)$.
\end{lemma}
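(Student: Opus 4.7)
The plan is to adapt Gallagher's standard argument that $f(Y)\leq [X:Y]f(X)$, keeping careful track of which classes lie in the prescribed unions. The starting observation is that since $\mathcal{C}\subseteq \mathcal{C}'$ and $\mathcal{C}'$ is a union of $X$-conjugacy classes, every $Y$-class contained in $\mathcal{C}$ sits inside a unique $X$-class contained in $\mathcal{C}'$. Consequently, it will suffice to show that for any single $X$-conjugacy class $K\subseteq \mathcal{C}'$, the number of $Y$-classes lying inside $K\cap Y$ is at most $[X:Y]$; summing this bound over the at most $f_{\mathcal{C}'}(X)$ $X$-classes in $\mathcal{C}'$ then yields the main inequality.

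For the per-class bound, I would fix $x\in K$ so that $K=x^X$, and let $y_1^Y,\ldots,y_r^Y$ be the distinct $Y$-classes contained in $K\cap Y$. The key comparison is that $C_Y(y_i)=Y\cap C_X(y_i)$, so $|C_Y(y_i)|\leq |C_X(y_i)|=|C_X(x)|$ since $y_i$ and $x$ are $X$-conjugate. Therefore each $Y$-class has size $|y_i^Y|=|Y|/|C_Y(y_i)|\geq |Y|/|C_X(x)|$, and summing over $i$ gives
$$r\cdot \frac{|Y|}{|C_X(x)|}\leq |K\cap Y|\leq |K|=\frac{|X|}{|C_X(x)|},$$
whence $r\leq [X:Y]$, as wanted.

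For the \textit{in particular} assertion, I would simply take $\mathcal{C}$ to be the set of elements of prime order in $Y$ and $\mathcal{C}'$ the set of elements of prime order in $X$: both are unions of conjugacy classes in their respective groups, and every prime-order element of $Y$ is a prime-order element of $X$, so $\mathcal{C}\subseteq \mathcal{C}'$ and the first part applies. There is no real obstacle here; the whole argument rests on the elementary centraliser comparison $C_Y(y_i)\leq C_X(y_i)$ combined with the orbit-size accounting above, so the proof should be short.
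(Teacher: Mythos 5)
Your proof is correct, but it takes a different route from the paper. The paper's proof is a one-line global averaging argument: it writes
$$f_\mathcal{C}(Y)=\frac{1}{|Y|}\sum_{y\in\mathcal{C}}|C_Y(y)|$$
and then compares this termwise with the analogous formula for $f_{\mathcal{C}'}(X)$, using $|C_Y(y)|\leq|C_X(y)|$ and $\mathcal{C}\subseteq\mathcal{C}'$, so the whole inequality drops out in a single display. You instead prove the finer, per-class statement that any single $X$-conjugacy class $K$ can intersect $Y$ in at most $[X:Y]$ distinct $Y$-classes, and then sum this uniform bound over the $f_{\mathcal{C}'}(X)$ classes making up $\mathcal{C}'$. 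Both arguments hinge on the same elementary fact $C_Y(y)=Y\cap C_X(y)\leq C_X(y)$ together with the orbit--stabiliser count, so they are of comparable length and elementarity; but your decomposition isolates a slightly sharper intermediate statement (the per-class bound of $[X:Y]$), which makes the source of the factor $[X:Y]$ more transparent and would also give information in situations where one cares about the distribution of $Y$-classes among the individual $X$-classes. The paper's version is more compact and mirrors Gallagher's original computation more closely.
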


\begin{proof}
We adapt Gallagher's proof in \cite{gal} as follows. First we obtain a formula for $f_\mathcal{C}(Y)$:
$$\frac{1}{|Y|}\displaystyle\sum_{y\in \mathcal{C}}|C_Y(y)|= \displaystyle\sum_{y\in \mathcal{C}}\frac{1}{|y^Y|}=f_\mathcal{C}(Y). $$
Of course, this formula can be used to determine $f_\mathcal{C'}(X)$ as well. Since  $\mathcal{C}\subseteq \mathcal{C}'$ and $C_Y(y)\leq C_X(y)$ for all $y\in \mathcal{C}$, the result follows.
\end{proof}

We will also need the following technical consequences of the CFSG. The first is routine to verify since $|\Out(T)|$ and $|T|$ are known for every  simple group $T$ (see \cite{kl}, for example, for lists of these quantities), so a proof is omitted. For the second, note that $l(T)$ denotes the untwisted Lie rank of a simple group $T$ of Lie type; when $T$ is a twisted group, this is simply the Lie rank of the corresponding untwisted group. Recall that $p(T)$ denotes the minimal index of a proper subgroup of $T$.

\begin{lemma}
 \label{Out(T)}
Let $T$ be a non-abelian simple group. Then $|\Out(T)|^3<|T|$. 	
\end{lemma}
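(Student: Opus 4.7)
The plan is to prove the lemma by a case-by-case analysis using the CFSG, invoking the explicit tables of orders of simple groups and their outer automorphism groups (for instance those in \cite{kl}). The finite non-abelian simple groups partition into four families: the alternating groups $A_n$ with $n\geq 5$, the 26 sporadic groups, the classical groups of Lie type, and the exceptional groups of Lie type. For each family, both $|T|$ and $|\Out(T)|$ are known explicitly, so verifying $|\Out(T)|^3 < |T|$ is mechanical.

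First I would dispatch the easy families. For alternating groups, $|\Out(A_n)|=2$ when $n\neq 6$ and $|\Out(A_6)|=4$, while $|A_n|=n!/2 \geq 60$; direct comparison (with the only slightly delicate case being $n=6$, where $4^3 = 64 < 360 = |A_6|$) yields the bound. For sporadic groups, $|\Out(T)|\in\{1,2\}$ for all 26 groups, so $|\Out(T)|^3 \leq 8$, whereas the smallest sporadic order is $|M_{11}|=7920$.

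The main work is for groups of Lie type. Writing $|\Out(T)| = d\cdot f \cdot g$ with $d$ the order of diagonal automorphisms modulo inner, $f$ the order of field automorphisms, and $g$ the order of graph automorphisms, one has absolute bounds $g\leq 6$ (with $g\leq 2$ outside of type $D_4$) and $f = \log_p q$ where $q=p^f$ is the field of definition. For exceptional types ($G_2, F_4, E_6, E_7, E_8, {}^2B_2, {}^2G_2, {}^2F_4, {}^3D_4, {}^2E_6$) the value of $d$ is bounded by a small absolute constant and $|T|$ grows like a fixed positive power of $q$ whose exponent dominates $3(fg)$ for all $q\geq 2$; a handful of small $q$ cases are checked directly. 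For classical types, the parameter $d$ depends on the rank $n$ (e.g.\ $d=\gcd(n,q-1)\leq n$ for $\mathrm{PSL}_n(q)$), but $|T|$ grows like $q^{n^2-O(n)}$ while $|\Out(T)|^3 \leq (2n\log_2 q)^3$, so the inequality holds comfortably once $n$ or $q$ is moderate, and the remaining small-rank, small-$q$ cases can be read off from the tables.

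The main obstacle is simply the exhaustive verification of the tight cases, which occur in the classical family at small rank and small $q$: groups such as $\mathrm{PSL}_2(q)$, $\mathrm{PSL}_3(q)$, $\mathrm{PSU}_3(q)$, and $\mathrm{PSL}_4(q)$ for small $q$, where $d$ and $f$ are comparable to lower-order factors of $|T|$. In each such case one compares $(d f g)^3$ with the explicit formula for $|T|$ directly; no case comes close to violating the inequality, the tightest ratios being of order $10^{-1}$ or so. Since these finitely many small cases are all verifiable by inspection and all other cases are handled by the uniform growth-rate argument above, the lemma follows.
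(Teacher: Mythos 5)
Your proposal is correct and is precisely the ``routine'' case-by-case verification via the CFSG that the paper alludes to: the paper in fact \emph{omits} the proof of this lemma, remarking just before it that ``$|\Out(T)|$ and $|T|$ are known for every simple group $T$ (see \cite{kl}, for example, for lists of these quantities), so a proof is omitted.'' Your checks of the tight cases (e.g.\ $A_6$ with $4^3=64<360$, $L_3(4)$ with $12^3=1728<20160$, $L_2(9)$ with $64<360$) and your uniform growth-rate argument for larger rank or $q$ are exactly the intended argument, so there is nothing to compare beyond noting that the paper chose not to write it out.
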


\begin{lemma}
\label{p(T) bound} 
Let $T$ be a simple group of Lie type over $\mathbb{F}_q$ where $T\neq L_m(2)$ for any $m$. Then $$|\Out(T)|^2(6q)^{l(T)}\leq C p(T)^{11/4}$$ for some absolute constant $C$.
\end{lemma}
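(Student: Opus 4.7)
The plan is to carry out a case analysis on the family of finite simple groups of Lie type. For each family, explicit formulas for $|\Out(T)|$ and the Lie rank $l(T)$ are standard (see \cite{kl}), and lower bounds on $p(T)$ are available in the references \cite{kl,maz2,vm1,v1,v2,v3} that are already cited in the proof of Proposition \ref{small top k<32}. The task is then to compare these quantities.

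First I would dispose of the exceptional groups of Lie type. Here $l(T)$ is absolutely bounded (by $8$), $|\Out(T)| \leq 6\log_p q$ for $q = p^f$, and $p(T)$ is a polynomial in $q$ whose degree meets or exceeds $l(T)$. Since $l(T)$ is now a constant, absorbing it into $C$ reduces the inequality to a polynomial comparison in $q$ that is routinely verified case by case from the explicit formulas for minimal degrees.

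For the classical groups, the key uniform input is an estimate of the shape $p(T) \geq q^{l(T)}/c_0$ for some absolute constant $c_0$; this comes from the descriptions of minimal permutation degrees in \cite{kl,maz2,vm1}. Combined with the routine bound $|\Out(T)|^2 \leq c_1\,l(T)^2 (\log q)^2$, we obtain
$$\frac{|\Out(T)|^2(6q)^{l(T)}}{p(T)^{11/4}} \;\leq\; C_1\, l(T)^2 (\log q)^2 \bigl(6\,q^{-7/4}\bigr)^{l(T)}.$$
For $q \geq 3$ the factor $6q^{-7/4}$ is strictly less than $1$, and an elementary estimate shows that the right-hand side is then uniformly bounded as $l(T)$ and $q$ vary, giving the lemma in this range.

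The main obstacle will be the case $q = 2$, where the factor $6q^{-7/4} > 1$ and the naive estimate breaks down. Here one needs a sharper lower bound of the form $p(T) \geq c\cdot 2^{\alpha\,l(T)}$ with $\alpha > (4\log 12)/(11\log 2) \approx 1.30$, so that $p(T)^{11/4} \geq c'\,(6q)^{l(T)}$. A glance at the minimal degrees shows that this is comfortably satisfied by $U_n(2)$, $PSp_{2n}(2)$ and $P\Omega^{\pm}_n(2)$, each of which has $p(T)$ growing roughly like $4^{l(T)}$, but it fails for $L_m(2)$, where $p(T) = 2^m - 1 \approx 2^{\,l(T)+1}$. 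This is precisely why $L_m(2)$ must be excluded from the statement of the lemma. After the asymptotic estimates are in place, a finite collection of low-rank or small-$q$ exceptions remains, and these can be verified directly using the exact values tabulated in \cite{con,kl}.
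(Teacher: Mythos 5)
Your proposal is correct and follows the paper's broad strategy: a case analysis by Lie type using published lower bounds for the minimal permutation degree $p(T)$ together with the known values of $|\Out(T)|$ and $l(T)$, and identifying $L_m(2)$ as the family for which the estimate genuinely fails. The organization differs in the classical case, though. You propose a single estimate for all classical groups with $q\geq 3$ based on the weak but universal bound $p(T)\geq q^{l(T)}/c_0$, the key observation being that $6q^{-7/4}<1$ once $q\geq 3$; you then handle $q=2$ as a separate case, invoking the sharper growth $p(T)\gtrsim 4^{l(T)}$ available for the unitary, symplectic and orthogonal series. The paper instead makes no split on $q$: it runs through each classical family with its own sharper minimal-degree bound ($p(T)\geq q^{2m-2}$ for symplectic/orthogonal, $q^{2m-4}$ for unitary, $q^{m-1}$ for linear), and the restriction $q\geq 3$ enters only in the linear case, where $p(T)\geq q^{l(T)}$ is exactly the borderline bound and $q=2$ truly fails. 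Your route is cleaner conceptually for $q\geq 3$ at the price of an explicit extra case at $q=2$; the paper's route is tighter family by family and avoids the $q$-split altogether. Both are sound. One small point to flesh out in a final write-up: the claim that $l(T)^2(\log q)^2(6q^{-7/4})^{l(T)}$ is uniformly bounded over $q\geq 3$, $l(T)\geq 1$ is true (the supremum is a modest constant attained near $q=3$ and $l(T)$ around $15$), but deserves a sentence of verification rather than being called elementary; similarly ``routinely verified case by case'' for the exceptionals should, in a finished proof, be replaced by the explicit comparison $l(T)+2\leq\tfrac{11}{4}b(T)$ that the paper carries out.
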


\begin{proof}
Note that the presence of the absolute constant $C$ allows us to ignore finitely many $T$. Write $q=p^f$ where $p$ is a prime and $f$ is a positive integer. Values for $|\Out(T)|$ may be found in \cite{kl}, for example.

Suppose that $T$ is an exceptional group. Since $l(T)$ is constant and $|\Out(T)|$ is bounded above by a constant multiple of $q$, it suffices to show that $l(T)+2$ is at most $(11/4)b(T)$ for some constant $b(T)$ for which $p(T)\geq q^{b(T)}$. If $T$ is $^2B_2(q)$ or $^2G_2(q)$, then $l(T)= 2$ and we may take $b(T)=2$ by \cite{v3}. Otherwise, we have $l(T)\leq 8$ and we may take $b(T)=4$ by \cite{v1,v2,v3}. In both cases, the desired inequality is satisfied.

Let $T$ be one of the following groups: $PSp_{2m}(q)$ where $m\geq 2$, $\Omega_{2m+1}(q)$ where $m\geq 3$, $P\Omega_{2m}^+(q)$ where $m\geq 4$, or $P\Omega_{2m}^-(q)$ where $m\geq 4$. Then $l(T)=m$, $p(T)\geq q^{2m-2}$ by \cite{maz2,vm1}, and $|\Out(T)|$ is at most a constant multiple of $q$. Since
$q^2(6q)^{m}$ is at most $36q^{2(2m-2)},$
it follows that $T$ satisfies the desired inequality. 

Let $T$ be $U_m(q)$ where $m\geq 3$. Then $l(T)=m-1$, $p(T)\geq q^{2m-4}$ by \cite{maz2}, and $|\Out(T)|$ is at most a constant multiple of $(q+1)f$. Since $(q+1)^2f^2\leq q^{7/2}$ and $q^{7/2}(6q)^{m-1}\leq 36q^{(11/4)(2m-4)}$, we have verified the desired inequality.

Finally, suppose that $T$ is $L_m(q)$ where $m\geq 2$. We may assume that $T\neq L_2(9)$. Then $l(T)=m-1$, $p(T)\geq q^{m-1}$ by \cite{maz2}, and $|\Out(T)|$ is at most a constant multiple of $(q-1)f$. Note that $(q-1)^2f^2\leq q^{7/2}$. If $m\geq 3$, then since $q\geq 3$ (by assumption), it follows that $q^{7/2}(6q)^{m-1}$ is at most $ 36q^{(11/4)(m-1)}$, so $T$ satisfies the desired inequality. If $m=2$, then $|\Out(T)|$ is at most a constant multiple of $f$, and since 
$f^2q\leq    q^{11/4},$
the proof is complete.  
\end{proof}

We are now in a position to prove the three lemmas.

\begin{proof}[Proof of Lemma \ref{r_1(G)}]
If $\overrightarrow{\alpha}\pi\in R_1(G)$ where $\pi$ has prime order $p$, then $k/p$ is an integer and is therefore bounded above by $ \lfloor k/2 \rfloor$. Since $P=P_G$, equation (\ref{pi fpf}) of Lemma \ref{cc} then implies that
$$\max_{\overrightarrow{\alpha}\pi\in R_1(G)}|C_G(\overrightarrow{\alpha}\pi)|\leq |\Out(T)||P||T|^{\left\lfloor \frac{k}{2}\right\rfloor} .$$
Note that $|G_D|=|\Out(T)||T||P|$ and $|G|=|G_D||T|^{k-1}$. Then
$$
r_1(G)\leq \frac{|G_D|^2}{|G|\ }\max_{\overrightarrow{\alpha}\pi\in R_1(G)}|C_G(\overrightarrow{\alpha}\pi)|\leq \frac{|\Out(T)|^2|P|^2}{|T|^{\left\lceil \frac{k}{2}\right\rceil-2}}.
$$
	
By a classification-free result of Praeger and Saxl  \cite{ps}, since $P$ is primitive and does not contain $A_k$, we know that the order of $P$ is bounded above by $4^k$. Moreover, we have $|\Out(T)|^2\leq |T|^{2/3}$ by Lemma \ref{Out(T)}. Recall that $k\geq 5$, for if $k\leq 4$ then the primitivity of $P$ implies that $P$ is $A_k$ or $S_k$; thus $\left\lceil k/2\right\rceil-17/6$ is positive. Suppose that $T$ is not $A_5$ or $L_2(7)$. Then $|T|\geq 360$, so
$$\frac{|\Out(T)|^2|P|^2}{|T|^{\left \lceil \frac{k}{2}\right\rceil-2}}\leq  \frac{16^k}{|T|^{\frac{1}{6}}360^{\left \lceil \frac{k}{2}\right\rceil-\frac{17}{6}}}\leq \frac{360^{\frac{17}{6}}}{|T|^{\frac{1}{6}}} \left(\frac{16}{\sqrt{360}}\right)^k,$$
which is our desired bound. Furthermore, by \cite[Corollary 1.2]{bab2}, which is a classification-free result of Babai, again since $P$ is primitive and does not contain $A_k$, we know that  $|P|\leq \exp(4\sqrt{k}(\log{k})^2)$ for sufficiently large $k$. Note that $k$ is eventually larger than $8\sqrt{k}(\log{k})^2$.  Suppose that $T$ is $A_5$ or $L_2(7)$. Then $|\Out(T)|=2$, so
$$\frac{|\Out(T)|^2|P|^2}{|T|^{\left \lceil \frac{k}{2}\right\rceil-2}}\leq \frac{4e^{8\sqrt{k}(\log{k})^2}}{|T|^{\frac{1}{6}} 60^{\left \lceil \frac{k}{2}\right\rceil-\frac{13}{6}}}\leq\frac{4\cdot 60^{\frac{13}{6}}}{|T|^{\frac{1}{6}}} \left(\frac{e}{\sqrt{60}}\right)^k$$
for sufficiently large $k$. Since only finitely many $G$ have been omitted from our argument, the proof is complete.
\end{proof}

\begin{proof}[Proof of Lemma \ref{r_2(G)}]
  Note that if $R(T)$ is a set of representatives for the conjugacy classes of  elements of prime order in $\Aut(T)$, then we may assume that $R_2(G)=\{\overrightarrow{\alpha}:\alpha\in R(T)\}$ by Lemma \ref{conj eqn}. By applying Lemma \ref{conj eqn} and equation (\ref{pi not fpf}) of Lemma \ref{cc} with $\pi=1$, we get the following.
$$
\begin{array}{rl}
|G|r_2(G)=&\displaystyle\sum_{\alpha\in R(T)}  \left(  |\Aut(T)|^2|C_{\Aut(T)}(\alpha)|^{-2}\right )\left(|P||C_{\Aut(T)}(\alpha)||C_{\Inn(T)}(\alpha)|^{k-1}\right) \\
\leq & |\Aut(T)|^2|P| \displaystyle\sum_{\alpha\in R(T)} |C_{\Inn(T)}(\alpha)|^{k-2}\\
\leq & |\Out(T)|^2|T|^2|P|f_p(\Aut(T))\left(\displaystyle\max_{\alpha\in R(T)} |C_{\Inn(T)}(\alpha)|\right)^{k-2}.\\
\end{array}
$$
(Recall that $f_p(X)$ denotes the number of conjugacy classes of elements of prime order in a group $X$.)
Since  $k-2$ is positive and $[T:C_{\Inn(T)}(\alpha)]\geq p(T)$ for every $1\neq \alpha\in \Aut(T)$, if we divide by $|G|$, then we see that 
$r_2(G)$ is at most $|\Out(T)|f_p(\Aut(T))p(T)^{2-k}. $ It therefore suffices to show that 
$$|\Out(T)|f_p(\Aut(T))\leq Cp(T)^{11/4}$$
for some absolute constant $C$.  Note that we may ignore finitely many simple groups $T$ should we wish to due to the presence of the constant. In particular, we may ignore the sporadic groups. 

If $T$ is the alternating group $A_m$, then $p(T)=m$ and $\Out(T)$ is constant. In fact, we have that $\Aut(A_m)=S_m$ if $m\neq 6$, and since
$$f_p(S_m)= \displaystyle \sum_{\substack{2\leq p\leq m\\ p\ \mathrm{prime}}}\left\lfloor\frac{m}{p}\right\rfloor\leq \displaystyle \sum_{\substack{2\leq p\leq m\\ p\ \mathrm{prime}}}\frac{m}{2}\leq\frac{m^2}{2},$$
it follows that $f_p(\Aut(T))p(T)^{-11/4}$ is bounded above by $m^{-3/4}$.

Let us assume, then, that $T$ is a simple group of Lie type over $\mathbb{F}_q$. As  noted  before Lemma \ref{f(X)}, for any group $X$ and subgroup $Y$, it is elementary to show that $f(X)\leq [X:Y]f(Y)$.  (Note that Lemma \ref{f(X)} provides an upper bound on $f(Y)$ rather than on $f(X)$; see \cite{gal} for a proof of the upper bound on $f(X)$.) Hence $f_p(\Aut(T))\leq f(T)|\Out(T)|$. Moreover, from \cite[Theorem 1]{lp} we know that $f(T)\leq (6q)^{l(T)}$ where $l(T)$ is the untwisted Lie rank of $T$. If $T$ is not $L_m(2)$ for any $m$, then $|\Out(T)|^2(6q)^{l(T)}\leq C p(T)^{11/4}$ for some absolute constant $C$ by Lemma \ref{p(T) bound}, and we have verified the desired inequality. If $T$ is $L_m(2)$, then $|\Out(T)|$ is constant,  $p(T)\geq 2^{m-1}$ by \cite{maz2}, and $f(T)\leq 2^m$ by \cite[Lemma 5.9]{mr}, so the proof is complete.
\end{proof}

\begin{proof}[Proof of Lemma \ref{r_3(G)} assuming Lemma \ref{r_3(G)*}]
Note first of all that $R_3(G)$ may be empty. If so, then the result is true, so we may assume otherwise. For $\pi\in P$ of prime order $p$, as in the proof of Lemma \ref{cc}, let $f_\pi:=\fix_{[k]}(\pi)$, let $c_\pi$ be the number of non-trivial cycles of $\pi$ so that $c_\pi=(k-f_\pi)/p$, and let $r_\pi:=c_\pi+f_\pi$. Since $|C_{\Inn(T)}(\alpha)|\leq |T|$, Lemma (\ref{conj eqn}) and equation (\ref{pi not fpf}) of Lemma \ref{cc} imply that
$$\begin{array}{rl}
|G|r_3(G)\leq & \!\!\!\! \displaystyle\sum_{\overrightarrow{\alpha}\pi\in R_3(G)} \!\!\!\! |\alpha^{\Aut(T)}|^2|\pi^P|^2|C_P(\pi)||C_{\Aut(T)}(\alpha)||T|^{r_\pi-1}\\
= & |\Out(T)||P|  \!\!\!\!\displaystyle\sum_{\overrightarrow{\alpha}\pi\in R_3(G)} \!\!\!\!|\alpha^{\Aut(T)}||\pi^P||T|^{r_\pi}.\\
\end{array}$$
Let $R(T)$ denote a set of representatives for the conjugacy classes of elements of prime order in $\Aut(T)$ together with the identity, and let $R(P)$ denote a set of representatives for the conjugacy classes of elements of prime order in $P$ that fix a point of $[k]$. Then by Lemma \ref{conj eqn} we may assume without loss of generality that $R_3(G)\subseteq\{\overrightarrow{\alpha}\pi:\alpha\in R(T),\pi\in R(P)\}$, so
$$
\displaystyle\sum_{\overrightarrow{\alpha}\pi\in R_3(G)} \!\!\!\!|\alpha^{\Aut(T)}||\pi^P||T|^{r_\pi}\leq\!\!\!\!\displaystyle\sum_{\alpha \in R(T)}\!\!\!\!|\alpha^{\Aut(T)}| \!\!\!\!\displaystyle\sum_{\pi \in R(P)}\!\!\!\!|\pi^P||T|^{r_\pi}  \leq |T|^{\frac{4}{3}}\!\!\!\!\displaystyle\sum_{\pi \in R(P)}\!\!\!\!|\pi^P||T|^{r_\pi}
$$
since $|\Out(T)|\leq |T|^{1/3}$ by Lemma \ref{Out(T)}. Then the proof is complete by Lemma \ref{r_3(G)*}.
\end{proof}

\begin{proof}[Proof of Lemma \ref{r_3(G)*}]
For $\pi\in P$ of prime order $p$, as in the proof of Lemma \ref{r_3(G)}, let $f_\pi:=\fix_{[k]}(\pi)$, let $c_\pi$ be the number of non-trivial cycles of $\pi$ so that $c_\pi=(k-f_\pi)/p$, and let $r_\pi:=c_\pi+f_\pi$. As in the statement of the lemma, let $R(P)$ denote a set of representatives for the conjugacy classes of elements of prime order in $P$. We want to prove that
\begin{equation}
 \label{r3 strong}
\displaystyle\sum_{\pi \in R(P)} \frac{|\pi^P|}{|T|^{k-r_\pi-\frac{5}{3}}}\leq C\left( \frac{1}{c^k}+\frac{1}{\sqrt{k}} \right)
\end{equation}
for some absolute constants $C$ and $c>1$. Note that $r_\pi=k-1$ for some $\pi\in R(P)$ if and only if $P$ contains a transposition, which is equivalent to $P$ being $S_k$ since $P$ is primitive. Thus the exponent of $|T|$ in (\ref{r3 strong}) is always positive.

Let $\pi\in R(P)$ have order $p$. We may write $p c_\pi=\mu(P)+i$ for some non-negative integer $i$ where $\mu(P)$ denotes the minimal degree of $P$, which is the minimal number of points moved by an element of $P$. Then $c_\pi=\mu(P)/p+i/p$ and $f_\pi=k-\mu(P)-i$. Since $i/p-i\leq 0$ and $p\geq 2$, it follows that
$$\max_{\pi\in R(P)} r_\pi\leq  \left \lfloor\frac{\mu(P)}{2}\right \rfloor+k-\mu(P)=k-\left\lceil \frac{\mu(P)}{2}\right\rceil.$$
Hence we conclude that (\ref{r3 strong}) is true if the following inequality holds:
\begin{equation}
 \label{r3}
\frac{|P|}{|T|^{\left\lceil \frac{\mu(P)}{2}\right\rceil-\frac{5}{3}}}\leq C\left( \frac{1}{c^k}+\frac{1}{\sqrt{k}} \right).
\end{equation}
It will usually be sufficient to prove this inequality. The proof now divides into two cases according to whether $\mu(P)\geq k/3$ or not. In the first, we bound the left-hand side of (\ref{r3 strong}) or (\ref{r3}) by $C/c^k$, and in the second, we bound the left-hand side of  (\ref{r3 strong})  or (\ref{r3}) by $C/\sqrt{k}$.

\underline{Case 1}: $\mu(P)\geq k/3$.

Suppose first of all that $|T|\geq |L_3(3)|= 5616$ and $k>6$. Since $|P|\leq 4^k$ by \cite{ps} and $\lceil k/6 \rceil -5/3$ is positive, 
$$
\frac{|P|}{|T|^{\left\lceil \frac{\mu(P)}{2}\right\rceil-\frac{5}{3}}}\leq \frac{4^k}{5616^{\left\lceil \frac{k}{6}\right\rceil-\frac{5}{3}}}\leq 5616^{\frac{5}{3}}\left({\frac{4}{\sqrt[6]{5616}}}\right)^k,
$$
which is the upper bound we desire. Suppose instead that $|T|<5616$. For  sufficiently large $k$, we know that $|P|$ is at most $ \exp(4\sqrt{k}(\log{k})^2)$ by \cite[Corollary 1.2]{bab2}. Since $k$ is eventually larger than  $24\sqrt{k}(\log{k})^2$, it follows that
$$
\frac{|P|}{|T|^{\left\lceil \frac{\mu(P)}{2}\right\rceil-\frac{5}{3}}}\leq 60^{\frac{5}{3}}\left( \frac{e^{24\sqrt{k}(\log{k})^2}}{60^k}\right) ^{\frac{1}{6}}\leq 60^{\frac{5}{3}}\left( \frac{e}{60}\right) ^{\frac{k}{6}}
$$
for  sufficiently large $k$, which is again the upper bound we desire.  Lastly, suppose that $k=5$ or 6 (which we may do since $P$ must contain $A_k$ when $k\leq 4$).  Note that the left-hand side of (\ref{r3 strong}) is bounded above by $|P||T|^{5/3-k+r_{\pi^*}}$ where $ \pi^*\in R(P)$ achieves the maximum. Since $k-r_{\pi^*}\geq 2$, we may replace $|T|$ by 60, and since $|P|$ and $r_{\pi^*}$ are constant, this establishes equation (\ref{r3 strong}). Only finitely many $G$ have been excluded from our argument, so this case is complete.

\underline{Case 2}: $\mu(P)<k/3$.

Let $\Omega_{m,l}$ denote the set of subsets of $[m]$ of size $l$. Then by Liebeck and Saxl \cite[Theorem 2]{ls}, our assumption on $\mu(P)$ forces $P$ to be a subgroup of $S_m\wr_r S_r$ that contains $A_m^r$ and acts by the product action on $\Omega_{m,l}^r$ for some  $m\geq 5$, $r\geq 1$ and $1\leq l<m/2$. Note that  this action is primitive and faithful, that $(r,l)$ is not $(1,1)$ by assumption, and that $k=\binom{m}{l}^r$. Let
$$
g(m,r,l):=\binom{m-2}{l-1}\binom{m}{l}^{r-1}.
$$
Observe that $((1 2),1,\ldots,1)\in S_m^r$ moves $2g(m,r,l)$ points of $\Omega_{m,l}^r$ while no element of $S_m\wr_r S_r$ moves fewer; hence $g(m,r,l)\leq \mu(P)/2$. It is certainly true that $m^{mr}\geq \sqrt{k}$ and $|P|\leq m^{mr}r^r$, so since $g(m,r,l)\neq 1$ and $|T|\geq 60$, it follows that (\ref{r3}) is true if we can show that
\begin{equation}
\label{r3 (m,r,l)}
 2mr\log{m}+r\log{r}\leq g(m,r,l)\log{60}+C
\end{equation}
for some absolute constant $C$. If $r\geq 3$, then equation (\ref{r3 (m,r,l)}) holds since $g(m,r,l)\geq m^{r-1}$; if $r=2$ and $l\geq 2$, then equation (\ref{r3 (m,r,l)}) holds since $g(m,2,l)\geq m^2$; and if $r=1$ and $l\geq 3$, then equation (\ref{r3 (m,r,l)}) holds since $g(m,1,l)\geq (m-3)^2/2$ (and since $l<m/2$ forces $m>6$). Thus the cases when $(r,l)$ is $(1,2)$ or $(2,1)$ remain; note that for either one, the left-hand side of equation  (\ref{r3}) tends to infinity if $T$ is fixed and $m$ tends to infinity. We therefore establish equation (\ref{r3 strong}) instead.

Suppose that $(r,l)$ is  $(1,2)$. Recall that $P$ is $A_m$ or $S_m$ acting (faithfully) on the set $\Omega_{m,2}$ of 2-subsets of $[m]$ where $m\geq 5$. In the proof of Lemma \ref{r_2(G)}, we saw that $f_p(S_m)\leq m^2/2$, and so $f_p(P)\leq m^2$. But $m\geq \sqrt{k}$, so equation (\ref{r3 strong}) will be true if we can show that $|\pi^P|60^{r_\pi-k}$ is bounded above by $m^{-3}$ for each $\pi\in R(P)$. To this end, let $\pi$ be an element of $P$ of prime order $p$. Then the full cycle decomposition of $\pi$ in $S_m$ consists of $t$ cycles of length $p$ for some $t$ such that $1\leq t\leq \lfloor m/p \rfloor$. Certainly $|\pi^P|\leq m^{pt}$. Moreover, we have $k-r_\pi=(1-1/p)(k-f_\pi)\geq (k-f_\pi)/2$ and $\log{60}>4$, so it suffices to show that
\begin{equation}
\label{r3 (1,2)}
(pt+3)\log{m}\leq 2(k-f_\pi).
\end{equation}
Let $i$ and $j$ be distinct points of $[m]$. Clearly $\pi$ fixes $\{i,j\}$ if and only if either both $i$ and $j$ are  members of $\fix_{[m]}(\pi)$, or the full cycle decomposition of $\pi$ in $S_m$ contains the transposition $(i j)$. Hence
$$
f_\pi=|\fix_{\Omega_{m,2}}(\pi)|=\left \{
\begin{array}{ll}
 \binom{m-pt}{2} & \mbox{if}\ p\geq 3\\
 \binom{m-2t}{2}+t & \mbox{if}\ p=2.\\
\end{array}
\right.
$$
 By evaluating $2(k-f_\pi)$ and rearranging equation (\ref{r3 (1,2)}), it follows that equation (\ref{r3 strong}) is true if
\begin{equation}
\label{r3 (1,2)b}
(pt+3)\log{m}+p^2t^2+pt + \left \{
\begin{array}{ll}
 0 & \mbox{if}\ p\geq 3\\
 2t & \mbox{if}\ p=2\\
\end{array}
\right\}
\leq 2mpt
\end{equation}
for all primes $p$ and integers $t$ such that $1\leq t\leq \lfloor m/p \rfloor$. Since $pt+1\leq m+1\leq 4m/3$, it follows that $p^2t^2+pt\leq 4mpt/3$. In fact, we also have $4t^2+4t\leq 8mt/3$  since $2t+2\leq m+2\leq 4m/3$ when $m\geq 6$ and $2t\leq 4$ when $m=5$. Moreover, the fact that $3\log{m}\leq m$ implies that $(pt+3)\log{m}\leq 2mpt/3$ when $pt+3\leq 2pt$. Thus equation (\ref{r3 (1,2)b}) is satisfied if $p\neq 2$ or $t\neq 1$, and if $p=2$ and $t=1$, then it is easy to check that equation (\ref{r3 (1,2)b}) still holds.

The remaining case to consider is when $(r,l)$ is $(2,1)$. Recall that here $P$ is a subgroup of $S_m^2\rtimes C_2$ that contains $A_m^2$ and acts via the product action on $[m]^2$ for $m\geq 5$. Let $Q:=S_m^2\rtimes C_2$ and let $\tau$ denote the generator for $C_2$. First we determine the conjugacy classes of elements of prime order in $Q$. 

Let $\mathcal{C}$ be the union of those elements of prime order in $Q$ whose projection onto $C_2$ is trivial, and let $\mathcal{C}_\tau$ be the union of those elements of prime order in $Q$ whose projection onto $C_2$ is $\tau$. Then the elements in $\mathcal{C}$ with order $p$ have the form $(s_1,s_2)$ where $s_1$ and $s_2$ are elements of $S_m$ such that $s_i^p=1$ for both $i$ and $s_1$ or $s_2$ is non-trivial, and the elements of $\mathcal{C}_\tau$ have the form $(s,s^{-1})\tau$ for any $s\in S_m$. Note that both $\mathcal{C}$ and $\mathcal{C}_\tau$ are unions of conjugacy classes of $Q$ since $S_m^2\unlhd Q$. In fact, since $(s,u)^{-1}(s,s^{-1})\tau(s,u)=(u,u^{-1})\tau$ for any $s,u\in S_m$, it follows that $f_{\mathcal{C}_\tau}(Q)=1$. 

Let $(s_1,s_2)\in \mathcal{C}$. Since we may conjugate $(s_1,s_2)$ by $(u_1,u_2)$ or $(u_1,u_2)\tau$ for any $u_1,u_2\in S_m$, it follows that 
$(s_1,s_2)^Q=(s_1^{S_m}\times s_2^{S_m})\cup (s_2^{S_m}\times s_1^{S_m}).$
Fix a prime $p\leq m$. Then in $Q$ there are $m_p:=\lfloor m/p\rfloor$ conjugacy classes $(s_1,s_1)^Q$ where $s_1$ has order $p$, and since $(s_1,s_2)^Q=(s_2,s_1)^Q$, there are $\binom{m_p+1}{2}$ conjugacy classes $(s_1,s_2)^Q$ where $(s_1,s_2)$ has order $p$ but $s_1$ and $s_2$ have a different number of $p$-cycles on $[m]$ (allowing for the identity, which has no $p$-cycles). This accounts for all of the elements in $\mathcal{C}$ with order $p$. Then since $m_p\leq m/2$ for any prime $p$, we obtain
$$f_{\mathcal{C}}(Q)=\displaystyle \sum_{\substack{2\leq p\leq m\\ p\ \mathrm{prime}}}m_p+\binom{m_p+1}{2}\leq \displaystyle \sum_{\substack{2\leq p\leq m\\ p\ \mathrm{prime}}}\frac{m^2+6m}{8}\leq \frac{m^3+6m^2}{8}.$$
Thus $f_{\mathcal{C}}(Q)\leq (3/8)m^3$.

Since $P$ has index at most 8 in $Q$,  Lemma \ref{f(X)} implies that  $f_{\mathcal{C}\cap P}(P)\leq 3m^3$ and that $f_{\mathcal{C}_\tau\cap P}(P)\leq 8$. But $m=\sqrt{k}$, so equation (\ref{r3 strong}) is true if $|\pi^P|60^{r_\pi-k}$ is at most a constant multiple of  $m^{-4}$ for all $\pi\in R(P)\cap \mathcal{C}$ and at  most a constant multiple of  $m^{-1}$ for all $\pi\in R(P)\cap \mathcal{C}_\tau$  where both constants are absolute. 

We prove the latter requirement first. Let $\pi\in \mathcal{C}_\tau\cap P$. Then $|\pi^P|\leq m^{m-1}$ since $|\mathcal{C}_\tau|=|S_m|$. Moreover, if $\pi=(s,s^{-1})\tau$, then the set of fixed points of $\pi$ on $[m]^2$ is $\{(i,is):i\in [m]\}$, so $2(k-r_{\pi})=k-f_{\pi}=m^2-m$. Since $\log{m}\leq \log{60}(m-1)/2$, we have that $|\pi^P|60^{r_\pi-k}$ is bounded above by $m^{-1}$, as desired.

Now let $\pi=(s_1,s_2)$ be an element of prime order $p$ in $\mathcal{C}\cap P$, and suppose that for each $i$ the full cycle decomposition of $s_i$ in $S_m$ consists of $t_i$ $p$-cycles where $0\leq t_i\leq \lfloor m/p \rfloor$ and $t_1$ or $t_2$ is non-zero. Then $|(s_1,s_2)^P|\leq 2|s_1^{S_m}||s_2^{S_m}|\leq 2m^{pt_1+pt_2}$. Moreover, the element $(s_1,s_2)$ fixes $(i,j)\in [m]^2$ if and only if $s_1$ fixes $i$ and $s_2$ fixes $j$, so $f_{(s_1,s_2)}=(m-pt_1)(m-pt_2)$. Again, since $k-r_\pi\geq (k-f_\pi)/2$ and $\log{60}>4$, if we can show that
$$
(pt_1+pt_2+4)\log{m} + 2p^2t_1t_2\leq 2m(pt_1+pt_2),
$$
then $|\pi^P|60^{r_\pi-k}$ is  bounded above by  $2m^{-4}$, as desired. Since $\log{m}/m$ is at most $1/3$ and $x:=pt_1+pt_2$ is at least two, we obtain that $(x+4)\log{m} \leq mx$, and since $pt_i\leq m$ for both $i$, we obtain that $2p^2t_1t_2\leq mx$. This completes the proof.
\end{proof}

Now we prove Theorem \ref{diag prob k fixed} by modifying the proofs of Lemmas \ref{r_1(G)} and \ref{r_3(G)}.

\begin{proof}[Proof of Theorem \ref{diag prob k fixed}]
As in the proof of Theorem \ref{diag prob}, it suffices to show that if $G=A(k,T)\rtimes P$ where $P$ is a primitive subgroup of $S_k$ and $k\geq 5$, then $r_i(G)$ converges to 0 for each $i$   as $|T|$ tends to infinity with $k$ fixed. In the proof of Lemma \ref{r_1(G)}, we saw that $r_1(G)\leq |P|^2|T|^{8/3-\lceil k/2 \rceil}$ (since $|\Out(T)|^2<|T|^{2/3}$ by Lemma \ref{Out(T)}). But $k$ is at least 5, so $r_1(G)\to 0$ as $|T|\to \infty$ with $k$ fixed. Moreover, by Lemma \ref{r_2(G)} the same is true for $r_2(G)$ since $p(T)\to\infty$ as $|T|\to \infty$. Thus it remains to consider $r_3(G)$; this will require some extra work.

For $\pi\in P$ of prime order $p$, as we have done before, let $f_\pi:=\fix_{[k]}(\pi)$, $c_\pi:=(k-f_\pi)/p$ and $r_\pi:=c_\pi+f_\pi$. Also, let $R(P)$ denote a set of representatives for the conjugacy classes of elements of prime order in $P$ that also fix a point. By Lemma \ref{conj eqn}, we may assume that if  $\overrightarrow{\alpha}\pi\in R_3(G)$, then $\pi\in R(P)$; moreover, we will assume for simplicity that if $\pi$ is a transposition, then $\pi=(1 2)$. Accordingly, let $R_4(G):=\{\overrightarrow{\alpha}\pi\in R_3(G):\pi =(1 2)\}$, and let $r_4(G)$ be the sum of $r_3(G)$ restricted to elements of $R_4(G)$. Also, let $R_4(T):=\{\alpha\in \Aut(T): \overrightarrow{\alpha}\pi\in R_4(G) \}$. Note that $R_4(T)$ contains the identity if it is non-empty. Suppose for the time being that $R_4(G)$ is non-empty. Then $P=S_k$. By Lemma (\ref{conj eqn}) and equation (\ref{pi not fpf}) of Lemma \ref{cc}, we have that
$$\begin{array}{rl}
r_4(G)=  &  |(1 2)^{S_k}| \displaystyle\sum_{\alpha\in R_4(T)} \frac{|\alpha^{\Aut(T)}|}{|T|}\left (\frac{|C_{\Inn(T)}(\alpha)|}{|T|}\right)^{k-3}\\
 \leq & |(1 2)^{S_k}| \left( \frac{1}{|T|} +  \frac{|\Out(T)|}{p(T)^{k-3}}\right)\\
\end{array}$$
since $[T:C_{\Inn(T)}(\alpha)]\geq p(T)$ if $\alpha\neq 1$. Then for any primitive $P$, the proof of Lemma \ref{r_3(G)} and the above inequality imply that
$$r_3(G)\leq |(1 2)^{S_k}| \left( \frac{1}{|T|} +  \frac{|\Out(T)|}{p(T)^{k-3}}\right) +\displaystyle \sum_{\pi\in R(P)\setminus\{(1 2)\}}\frac{|\pi^P|}{|T|^{k-r_\pi-\frac{5}{3}}}.$$
But $|\Out(T)|\leq Cp(T)^{11/8}$ for some absolute constant $C$ by Lemma \ref{p(T) bound} since $|\Out(T)|$ is constant if $T$ is either $L_m(2)$, an  alternating group or a sporadic group. This then gives us the desired convergence since  $k\geq 5$ and $k-r_\pi\geq 2$ when $\pi$ is not a transposition.
\end{proof}

Note that the methods of this section can sometimes be adapted to the cases when either $k<5$ and $k$ is fixed as $|G|\to \infty$, or $k<|T|$ and $k$ grows with $|T|$ as $|G|\to \infty$. These results and their proofs are not included here but can be found in \cite{faw}.  For example, it is proved  that if $k\geq 3$, then the proportion of 3-tuples that are bases for $G$ tends to 1 if $k$ is fixed as $|G|\to\infty$. In fact, it looks likely that a similar result occurs when $k=2$ and $P_G$ is trivial; indeed, it is proved  for such groups that the proportion of 4-tuples that are bases tends to 1 if $k$ is fixed as $|G|\to\infty$ by using stronger bounds on the numbers of conjugacy classes of non-abelian simple groups. Lastly, it is proved that the proportion of pairs that are bases tends to 1 as $|G|$ tends to infinity if $k$ is a non-constant function in the variable $|T|$ for which $k^4\leq |T|$ for all non-abelian simple groups $T$.

\begin{ack}
I am grateful to Jan Saxl for his support and guidance, as well as to Ross Lawther and Peter Cameron for some helpful suggestions.
\end{ack}

\bibliographystyle{acm}
\bibliography{jbf_references}

\begin{thebibliography}{10}

\bibitem{ag}
{\sc Aschbacher, M., and Guralnick, R.}
\newblock Some applications of the first cohomology group.
\newblock {\em J. Algebra 90\/} (1984), 446 -- 460.

\bibitem{bab1}
{\sc Babai, L.}
\newblock On the order of uniprimitive permutation groups.
\newblock {\em Ann. Math. 113\/} (1981), 553--568.

\bibitem{bab2}
{\sc Babai, L.}
\newblock On the order of doubly transitive permutation groups.
\newblock {\em Invent. Math. 65\/} (1982), 473--484.

\bibitem{ben}
{\sc Benbenishty, C.}
\newblock Base sizes of standard actions of alternating and classical groups.
\newblock {\em to appear\/}.

\bibitem{boc}
{\sc Bochert, A.}
\newblock Ueber die {Z}ahl der verschiedenen {W}erthe, die eine {F}unction
  gegebener {B}uchstaben durch {V}ertauschung derselben erlangen kann.
\newblock {\em Math. Ann. 33\/} (1889), 584--590.

\bibitem{bur}
{\sc Burness, T.}
\newblock On base sizes for actions of finite classical groups.
\newblock {\em J. London Math. Soc. 75\/} (2007), 545--562.

\bibitem{bgs2}
{\sc Burness, T., Guralnick, R., and Saxl, J.}
\newblock Base sizes for ${S}$-actions of the finite classical groups.
\newblock {\em submitted\/}.

\bibitem{bgs}
{\sc Burness, T., Guralnick, R., and Saxl, J.}
\newblock On base sizes for symmetric groups.
\newblock {\em Bull. London Math. Soc. 43\/} (2011), 386--391.

\bibitem{bls}
{\sc Burness, T., Liebeck, M., and Shalev, A.}
\newblock Base sizes for simple groups and a conjecture of {C}ameron.
\newblock {\em Proc. London Math. Soc. 98\/} (2009), 116--162.

\bibitem{bow}
{\sc Burness, T., O'Brien, E., and Wilson, R.}
\newblock Base sizes for sporadic simple groups.
\newblock {\em Isreal J. Math. 177\/} (2010), 307--333.

\bibitem{cam}
{\sc Cameron, P.}
\newblock {\em Permutation groups}.
\newblock Cambridge University Press, 1999.

\bibitem{ck}
{\sc Cameron, P., and Kantor, W.}
\newblock Random permutations: some group-theoretic aspects.
\newblock {\em Combin., Prob. and Comp. 2\/} (1993), 257--262.

\bibitem{cns}
{\sc Cameron, P., Neumann, P., and Saxl, J.}
\newblock On groups with no regular orbits on the set of subsets.
\newblock {\em Arch. Math. 43\/} (1984), 295--296.

\bibitem{con}
{\sc Conway, J., Curtis, R., Norton, S., Parker, R., and Wilson, R.}
\newblock {\em Atlas of finite groups}.
\newblock Clarendon Press, Oxford, 1985.

\bibitem{dix}
{\sc Dixon, J., and Mortimer, B.}
\newblock {\em Permutation groups}.
\newblock Springer Verlag, New York, 1996.

\bibitem{faw}
{\sc Fawcett, J.}
\newblock {\em Bases of primitive permutation groups}.
\newblock PhD thesis, in preparation.

\bibitem{gal}
{\sc Gallagher, P.}
\newblock The number of conjugacy classes in a finite group.
\newblock {\em Math. Z. 118\/} (1970), 175--179.

\bibitem{GAP4}
{\sc The GAP~Group}.
\newblock {\em GAP -- Groups, Algorithms, and Programming, Version 4.4.12},
  2008.

\bibitem{glm}
{\sc Gluck, D., and Magaard, K.}
\newblock Base sizes and regular orbits for coprime affine permutation groups.
\newblock {\em J. London Math. 58\/} (1998), 603--618.

\bibitem{gssh}
{\sc Gluck, D., Seress, {\'A}., and Shalev, A.}
\newblock Bases for primitive permutation groups and a conjecture of {B}abai.
\newblock {\em J. Algebra 199\/} (1998), 367--378.

\bibitem{ja}
{\sc James, J.}
\newblock Partition actions of symmetric groups and regular bipartite graphs.
\newblock {\em Bull. London Math. Soc. 38\/} (2006), 224--232.

\bibitem{kl}
{\sc Kleidman, P., and Liebeck, M.}
\newblock {\em The subgroup structure of the finite classical groups}.
\newblock Cambridge University Press, Cambridge, 1990.

\bibitem{lie}
{\sc Liebeck, M.}
\newblock On minimal degrees and base sizes of primitive permutation groups.
\newblock {\em Arch. Math. 43\/} (1984), 11--15.

\bibitem{lps}
{\sc Liebeck, M., Praeger, C., and Saxl, J.}
\newblock On the {O}'{N}an-{S}cott theorem for finite primitive permutation
  groups.
\newblock {\em J. Austral. Math. Soc. 44\/} (1988), 389--396.

\bibitem{lp}
{\sc Liebeck, M., and Pyber, L.}
\newblock Upper bounds for the number of conjugacy classes of a finite group.
\newblock {\em J. Algebra 198\/} (1997), 538--562.

\bibitem{ls}
{\sc Liebeck, M., and Saxl, J.}
\newblock Minimal degrees of primitive permutation groups, with an application
  to monodromy groups of covers of {R}iemann surfaces.
\newblock {\em Proc. London Math. Soc. 3\/} (1991), 266.

\bibitem{lsh1}
{\sc Liebeck, M., and Shalev, A.}
\newblock Simple groups, permutation groups, and probability.
\newblock {\em J. Amer. Math. Soc. 12\/} (1999), 497--520.

\bibitem{lsh2}
{\sc Liebeck, M., and Shalev, A.}
\newblock Bases of primitive linear groups.
\newblock {\em J. Algebra 252\/} (2002), 95--113.

\bibitem{lsh3}
{\sc Liebeck, M., and Shalev, A.}
\newblock Character degrees and random walks in finite groups of {L}ie type.
\newblock {\em Proc. London Math. Soc. 90\/} (2005), 61--86.

\bibitem{msw}
{\sc Malle, G., Saxl, J., and Weigel, T.}
\newblock Generation of classical groups.
\newblock {\em Geom. Dedicata 49\/} (1994), 85--116.

\bibitem{mr}
{\sc Maslen, D., and Rockmore, D.}
\newblock Separation of variables and the computation of {F}ourier transforms
  on finite groups.
\newblock {\em J. Amer. Math. Soc. 10\/} (1997), 169--214.

\bibitem{maz2}
{\sc Mazurov, V.}
\newblock Minimal permutation representations of finite simple classical
  groups. {S}pecial linear, symplectic, and unitary groups.
\newblock {\em Algebra Logika 32\/} (1993), 142--153.

\bibitem{ps}
{\sc Praeger, C., and Saxl, J.}
\newblock On the orders of primitive permutation groups.
\newblock {\em Bull. London Math. Soc. 12\/} (1980), 303--307.

\bibitem{py1}
{\sc Pyber, L.}
\newblock Asymptotic results for permutation groups.
\newblock {\em DIMACS Ser. Discrete Math. Theoret. Comp. Sci. 11\/} (1993),
  197--219.

\bibitem{ser1}
{\sc Seress, {\'A}.}
\newblock The minimal base size of primitive solvable permutation groups.
\newblock {\em J. London Math. Soc. 53\/} (1996), 243--255.

\bibitem{s}
{\sc Seress, {\'A}.}
\newblock Primitive groups with no regular orbits on the set of subsets.
\newblock {\em Bull. London Math. Soc. 29\/} (1997), 697--704.

\bibitem{ser2}
{\sc Seress, {\'A}.}
\newblock {\em Permutation group algorithms}.
\newblock Cambridge University Press, Cambridge, 2003.

\bibitem{v1}
{\sc Vasilyev, A.}
\newblock Minimal permutation representations of finite simple exceptional
  groups of types ${G}_2$ and ${F}_4$.
\newblock {\em Algebra Logika 35\/} (1996), 371--383.

\bibitem{v2}
{\sc Vasilyev, A.}
\newblock Minimal permutation representations of finite simple exceptional
  groups of types ${E}_6$, ${E}_7$, and ${E}_8$.
\newblock {\em Algebra Logika 36\/} (1997), 518--530.

\bibitem{v3}
{\sc Vasilyev, A.}
\newblock Minimal permutation representations of finite simple exceptional
  twisted groups.
\newblock {\em Algebra Logika 37\/} (1998), 17--35.

\bibitem{vm1}
{\sc Vasilyev, V., and Mazurov, V.}
\newblock Minimal permutation representations of finite simple orthogonal
  groups.
\newblock {\em Algebra Logika 33\/} (1995), 337--350.

\end{thebibliography}

\end{document}